\documentclass{amsart} 
\usepackage{amssymb,latexsym,amsfonts,verbatim,amscd,ifthen,amsmath, graphicx}
\usepackage{color,mathtools,tikz}

\numberwithin{equation}{section}

\theoremstyle{plain}

\newtheorem{theorem}[equation]{Theorem}   
\newtheorem{lemma}[equation]{Lemma} 
\newtheorem{proposition}[equation]{Proposition}

\theoremstyle{definition}

\newtheorem{definition}[equation]{Definition} 
\newtheorem{remark}[equation]{Remark} 
\newtheorem{remarks}[equation]{Remarks} 
 
\newtheorem{example}[equation]{Example}
\newtheorem{examples}[equation]{Examples}

\newtheorem*{definition*}{Definition}
\newtheorem*{acknowledgments}{Acknowledgments}

\makeatletter
\newsavebox{\@brx}
\newcommand{\llangle}[1][]{\savebox{\@brx}{\(\m@th{#1\langle}\)}%
  \mathopen{\copy\@brx\mkern2mu\kern-0.9\wd\@brx\usebox{\@brx}}}
\newcommand{\rrangle}[1][]{\savebox{\@brx}{\(\m@th{#1\rangle}\)}%
  \mathclose{\copy\@brx\mkern2mu\kern-0.9\wd\@brx\usebox{\@brx}}}
\makeatother

\makeatletter
\newsavebox{\@brxa}
\newsavebox{\@brxb}
\newcommand{\lbrangle}[1][]{\savebox{\@brxa}{\(\m@th{#1(}\)}%
                           \savebox{\@brxb}{\(\m@th{#1(}\)}%
  \mathopen{\copy\@brxa\mkern2mu\kern-0.9\wd\@brxb\usebox{\@brxb}}}
\newcommand{\rbrangle}[1][]{\savebox{\@brxa}{\(\m@th{#1\rbrace}\)}%
                            \savebox{\@brxb}{\(\m@th{#1\rbrace}\)}%
  \mathclose{\copy\@brxb\mkern2mu\kern-0.9\wd\@brxb\usebox{\@brxa}}}
\makeatother

\DeclareMathOperator{\Hom}{Hom} 
\DeclareMathOperator{\Ext}{Ext} 
 
\DeclareMathOperator{\HH}{H} 
\DeclareMathOperator{\RH}{\widetilde H}

\DeclareMathOperator{\id}{id}  
\DeclareMathOperator{\coker}{Coker}

\DeclareMathOperator{\rank}{rank}
\DeclareMathOperator{\im}{Im} 
\DeclareMathOperator{\pd}{pd}

\DeclareMathOperator{\Ker}{Ker} 
\DeclareMathOperator{\supp}{supp}
\DeclareMathOperator{\sign}{sign}

\begin{document}   

\renewcommand{\:}{\! :} 
\newcommand{\p}{\mathfrak p} 
\newcommand{\m}{\mathfrak m}
\newcommand{\e}{\epsilon}
\newcommand{\lra}{\longrightarrow}
\newcommand{\lla}{\longleftarrow}
\newcommand{\la}{\leftarrow} 
\newcommand{\ra}{\rightarrow} 
\newcommand{\altref}[1]{{\upshape(\ref{#1})}} 
\newcommand{\bfa}{\boldsymbol{\alpha}} 
\newcommand{\bfb}{\boldsymbol{\beta}} 
\newcommand{\bfg}{\boldsymbol{\gamma}} 
\newcommand{\bfM}{\mathbf M} 
\newcommand{\bfI}{\mathbf I} 
\newcommand{\bfC}{\mathbf C} 
\newcommand{\bfB}{\mathbf B} 
\newcommand{\bsfC}{\bold{\mathsf C}} 
\newcommand{\bsfT}{\bold{\mathsf T}}
\newcommand{\smsm}{\smallsetminus} 
\newcommand{\ol}{\overline} 
\newcommand{\mgf}{\, \Bbbk} 
\newcommand{\mbf}[1]{\mathbf{#1}}
\newcommand{\ds}{\displaystyle} 
\newcommand{\mdeg}{{\textup{mdeg}}}

\newlength{\wdtha}
\newlength{\wdthb}
\newlength{\wdthc}
\newlength{\wdthd}
\newcommand{\elabel}[1]
           {\label{#1}  
            \setlength{\wdtha}{.4\marginparwidth}
            \settowidth{\wdthb}{\tt\small{#1}} 
            \addtolength{\wdthb}{\wdtha}
            \raisebox{\baselineskip}
            {\color{red} 
             \hspace*{-\wdthb}\tt\small{#1}\hspace{\wdtha}}}  

\newcommand{\mlabel}[1] 
           {\label{#1} 
            \setlength{\wdtha}{\textwidth}
            \setlength{\wdthb}{\wdtha} 
            \addtolength{\wdthb}{\marginparsep} 
            \addtolength{\wdthb}{\marginparwidth}
            \setlength{\wdthc}{\marginparwidth}
            \setlength{\wdthd}{\marginparsep}
            \addtolength{\wdtha}{2\wdthc}
            \addtolength{\wdtha}{2\marginparsep} 
            \setlength{\marginparwidth}{\wdtha}
            \setlength{\marginparsep}{-\wdthb} 
            \setlength{\wdtha}{\wdthc} 
            \addtolength{\wdtha}{1.4ex} 
            \settowidth{\wdthb}{\tt\small{#1}} 
            \marginpar{\vspace*{\baselineskip}
\smash{\raisebox{0.7\baselineskip}{\tt\small{#1}}\hspace{-\wdthb}%
\raisebox{.3\baselineskip}
            {\rule{\wdtha}{0.5pt}}} }
            \setlength{\marginparwidth}{\wdthc} 
            \setlength{\marginparsep}{\wdthd}  }  
            
\renewcommand{\mlabel}{\label}
\renewcommand{\elabel}{\label}


\title[Minimal resolutions are supported on posets]
{Minimal free resolutions 
of monomial ideals and of toric rings 
are supported on posets}

\author{Timothy B. P. Clark}
\address{Department of Mathematics and Statistics\\ 
         Loyola University Maryland \\ 
         4501 North Charles Street \\
         Baltimore, MD 21210}
\email{tbclark@loyola.edu}

\author{Alexandre B. Tchernev} 
\address{Department of Mathematics and Statistics\\
         University at Albany, SUNY\\
         1400 Washington Avenue\\  
         Albany, NY 12222}
\email{atchernev@albany.edu}
\keywords{} 
\subjclass{} 
\begin{abstract} 
We introduce the notion of a \emph{resolution supported on 
a poset}. When the poset is a CW-poset, i.e. the face poset 
of a regular CW-complex, we recover the 
notion of cellular resolution as introduced by Bayer and Sturmfels. 
Work of Reiner and Welker, and of Velasco,  
has shown that there are monomial ideals 
whose minimal free resolutions are not cellular, hence  
cannot be supported on any CW-poset. We show that for any 
monomial ideal there is a \emph{homology CW-poset} 
that supports a minimal free resolution of the ideal. 
This allows one to extend to every minimal resolution, essentially 
verbatim, techniques initially developed to study cellular 
resolutions. As two demonstrations of this process, 
we show that minimal resolutions of 
toric rings are supported on what we call toric hcw-posets, and  
we give a new combinatorial proof of a fundamental result of Miller  
on the relationship between Artininizations and 
Alexander duality of monomial ideals. 
\end{abstract} 

\maketitle 

\section*{Introduction}

Monomial ideals provide a fundamental gateway for interactions 
between commutative algebra, combinatorics, and algebraic geometry. 
An important question arising in all these areas, and subject to 
intensive past and current research, see e.g. \cite{MiSt, HeHi, Yuz, GeHaMiNa} 
and the references there,  is how to access and control better the 
homological properties and the specializations of a given monomial ideal $I$.   
The lack of  
understanding, in general, of the structure of the maps in a minimal free 
resolution of $I$, provides a significant obstacle. One of 
the most successful approaches to this problem, building on 
the works of Taylor \cite{Tay} and Bayer, Peeva, and Sturmfels~\cite{BaPeSt}, 
is the use 
of \emph{cellular resolutions}, a notion introduced  by 
Bayer and Sturmfels~\cite{BaSt}, and subsequently generalized by 
Batzies and Welker~\cite{BaWe}. With our applications in mind,  in this 
paper we make an effort to keep terminology as consistent as 
possible with that of Miller and Sturmfels~\cite{MiSt}, 
and we reserve the term 
\emph{cellular resolution} for the original notion from \cite{BaSt}, 
while, when needed, we use the term \emph{CW-resolution} for the more 
general notion from \cite{BaWe}.   

Given a cellular resolution of $I$, its  differentials 
are encoded in 
the boundary maps of the cellular chain complex of a 
regular CW-complex. The presence of such a cellular structure     
allows for the introduction of powerful methods from topological 
combinatorics to the study of homological properties of $I$. 
Even 
when the cellular resolution is not minimal, the construction and 
variations on it have  interesting applications in geometry, cf. e.g.  
\cite{BaSt, JoMi, DeYu, Yuz}, lead to formulas for homological 
invariants of $I$ in terms of combinatorial and  topological invariants 
of the regular CW-complex; and, in many cases, lead to a complete 
description of the minimal resolution of $I$ either directly, or 
through methods such as algebraic discrete Morse theory 
\cite{BaWe, JoWe, Sko, OrWe}. 
When the cellular resolution is minimal then there is also a subtle  
connection between the geometric properties of $I$ and the topology 
of the corresponding regular CW-complex. (A very instructive recent 
example of this phenomenon 
is the result of Okazaki and Yanagawa \cite{OkYa} that when $I$ 
is a stable ideal and $X$ is the regular 
CW-complex for the cellular structure described by Clark~\cite{Cl2} 
on a minimal resolution of $I$, then 
Cohen-Macaulayness of $I$ implies that $X$  is homeomorphic to a closed ball.) 
Just as importantly, even for minimal resolutions that are 
well-known, a cellular structure 
provides otherwise unavailable 
effective tools to study linearity properties of 
the resolution, as well as duality;  
see \cite[Sections 5.3 and 5.4]{MiSt}. 

There has been a significant amount 
of current research identifying cellular structures for minimal resolutions 
coming from geometry, combinatorics, and algebra, see e.g. 
\cite{BaPoSt, BlYu, BrBrKl, Cl2, CoNa1, CoNa2, DeYu, DoEn, DoJoSa, DoMo, 
DoSa, Flo1, KuKu, Me, NaRe, No, OkYa, Si}. 
%
%
However, there exist minimal resolutions that do not have a cellular 
structure, as shown by Reiner and Welker~\cite{ReWe}.  
Furthermore, 
Velasco~\cite{Ve} has shown that there are minimal free 
resolutions that cannot be CW-resolutions either.  
It is therefore of great 
interest to be able to generalize in a different way 
the techniques developed for cellular 
resolutions, so that they become applicable to 
every minimal resolution. 
%
%
%
%
%

In this paper, we show that replacing CW-complexes with  posets 
yields such a generalization.  
We introduce the notion of a 
\emph{resolution supported on a poset}.
When the poset is a CW-poset, i.e. the face poset of a regular 
CW-complex, we recover the notion of cellular resolution. 
In particular, the examples by Reiner and Welker~\cite{ReWe} and 
Velasco~\cite{Ve} provide 
monomial ideals whose minimal free resolution \emph{cannot} be supported 
on a CW-poset. Nevertheless, 
in our first main result, Theorem~\ref{T:MFR-from-poset},  
we show that every minimal 
free resolution of a monomial ideal is in fact supported on some poset. 
In general, there are many possible choices for such a poset, 
and one cannot expect 
a good relationship between the combinatorics/topology of the poset 
and the properties of the ideal. However, 
the situation improves when one considers posets 
that are sufficiently close to being CW.   
A poset $P$ is a 
\emph{homology CW-poset} (or \emph{hcw-poset} for short) over 
a commutative ring with unit $\Bbbk$  if for each $a\in P$ the lower set   
$P_{<a}=\{x\in P| x<a\}$ is a finite-dimensional 
homology sphere over $\Bbbk$,   
i.e. the order simplicial complex $\Delta_{<a}=\Delta(P_{<a})$ has 
finite dimension and its reduced homology with coefficients in $\Bbbk$ 
is the same as that of a $(\dim\Delta_{<a})$-sphere.  
In our second main result, Theorem~\ref{T:cl-supports-I}, 
we prove that 
every monomial ideal in a polynomial ring over a field $\Bbbk$ 
has a minimal free resolution  
supported on an appropriate hcw-poset over $\Bbbk$. Again, in general 
such an hcw-poset is not unique, and it is an 
important open question whether 
one can be constructed in a canonical way. 
Our proof uses a series of non-canonical choices, and we identify 
in Theorem~\ref{T:BettihcwiffRigid} 
the class of monomial ideals for which no choices arise.  
This turns out to be the  
class of \emph{rigid} monomial ideals \cite{ClMa}. In this case 
the Betti poset \cite{ClMa,ClMa2,TchVa}
is an hcw-poset that supports a minimal free resolution,  
and this property characterizes rigid ideals. 

In addition to providing a new method for constructing explicit 
minimal free resolutions of monomial ideals, a key benefit of   
our results is that, by considering hcw-posets, they 
allow to translate and generalize, almost verbatim,   
cellular resolution techniques into methods that can be used on 
any minimal resolution. We include 
two applications that demonstrate this 
process. First, we leverage the methods from \cite{BaSt} and prove  
in our third main result, Theorem~\ref{T:toric-resolutons-are-hcw}, 
that minimal free resolutions of 
toric rings are supported on what we call \emph{toric} hcw-posets. 
Finally, in our second application 
we generalize notions and techniques from 
\cite{MiSt}, and give a new combinatorial proof of a fundamental
result of Miller~\cite{Mi}  on 
the structural relationship
between Artinianizations of monomial ideals and Alexander duality.

The outline of  the paper is as follows. 
In Section~\ref{S:Poset} we introduce  
the conic chain complex in Definition~\ref{D:conic-cx-of-poset}. 
This is a central 
object of study, and we provide two additional equivalent 
descriptions in Propositions~\ref{P:conic-form} 
and \ref{P:conic-elements}. Readers 
with background in pure combinatorics may find the second of these 
more appealing.     
The notion of a resolution supported 
on a poset is introduced in Section~\ref{S:Homogenization}. 
In Section~\ref{S:MinSupp}
we define bases with minimal support and describe their basic  
properties. 
In Section~\ref{S:Incidence}, 
we introduce and study incidence posets.  
In Section~\ref{S:hcw} we provide a mechanism for transforming 
an incidence poset into an hcw-poset. Finally, 
we put these results together in  
Section~\ref{S:Main}. 
The applications of the main results to 
Artinianizations and Alexander duality are discussed 
in Section~\ref{S:AAD}. 


Throughout this paper $\Bbbk$ is a commutative ring with unit, 
rings are commutative $\Bbbk$-algebras with unit, modules are 
unitary, and unadorned tensor products are over~$\Bbbk$. 
We  consider $\mathbb Z^m$ as a poset via the coordinatewise 
partial ordering: $(a_1,\dots,a_m)\le (b_1,\dots,b_m)$ if and only if 
$a_i\le b_i$ for all $i$. 
For a chain complex $\mathcal F_\bullet=(F_n, f_n)$ we denote by 
$\mathcal F_\bullet[m]=(F_n[m], f_n[m])$ its shift by $m$, where 
$F_n[m]=F_{n+m}$ and $f_n[m]=f_{n+m}$. The complex $\mathcal F_\bullet$ 
is called \emph{nonnegative} if $F_n=0$ for $n<0$. 
We write $Z_n\mathcal F_\bullet=\Ker f_n$ for the $n$-cycles,  
$B_n\mathcal F_\bullet= \im f_{n+1}$ for the $n$-boundaries, 
and $\HH_n\mathcal F_\bullet=Z_n\mathcal F_\bullet\big/ B_n\mathcal F_\bullet$ 
for the $n$th homology of $\mathcal F_\bullet$. 
We say that $\mathcal F_\bullet$ is 
\emph{acyclic} or a \emph{resolution} if $\HH_n\mathcal F_\bullet =0$ 
for $n\ne 0$, and call $\mathcal F_\bullet$  \emph{exact} 
if $\HH_n\mathcal F_\bullet=0$ for all $n$. 

\begin{acknowledgments}
We are grateful to the anonymous referee, whose comments 
helped significantly in improving the 
scope and clarity of our presentation. We also thank Ezra Miller
for pointing out that Theorem~\ref{T:main-theorem-3}
is already known as a consequence of~\cite[Theorem 4.5]{Mi}.  
\end{acknowledgments}

\section{The conic chain complex}\mlabel{S:Poset}

Let $P$ be a poset. We write $\hat P$ and $\check P$ 
for the posets obtained from 
$P$ by adding a maximum $\hat 1$ and a minimum $\check 0$, respectively. 
In particular, we have $(\check P)^{op}=\widehat{P^{op}}$. 
The \emph{order simplicial complex} $\Delta=\Delta(P)$ is the abstract 
simplicial complex with vertices the elements of $P$, and $n$-faces the 
chains
\begin{equation}\label{E:poset-chains}
a_0 > \dots > a_n 
\end{equation}
in $P$. We will always orient the faces of $\Delta$ 
by ordering their vertices in decreasing order 
(as defined in $P$), and we write $[a_0,\dots,a_n]$ for the 
oriented $n$-face given by the chain \eqref{E:poset-chains}. In 
particular, we write $[\,]$ for the oriented empty face of $\Delta$.  
The free $\Bbbk$-module with basis all (oriented) $n$-faces of $\Delta$ 
is denoted by $C_n=C_n(\Delta,\Bbbk)$ and called the module of (oriented   
simplicial)  $n$-chains of $\Delta$. For each $n\ge 0$ the $n$th 
\emph{simplicial boundary operator}  
$d_n\: C_n \lra C_{n-1}$ is given 
on basis elements by the formula \ 
$
d_n[a_0,\dots, a_n] = 
\sum_{i=0}^n (-1)^i[a_0,\dots,\hat a_i, \dots, a_n], 
$ 
\ 
where the hat means that the corresponding element is omitted. 
This yields the \emph{simplicial chain complex} of $\Delta$ (with 
coefficients in $\Bbbk$) 
\[
C_\bullet(\Delta,\Bbbk)\quad = \quad  
0 \lla C_0 \lla\dots\lla C_{n-1} \overset{d_n}\lla C_n \lla \dots \ , 
\]
and its augmentation 
\[
\widetilde C_\bullet(\Delta,\Bbbk) \quad = \quad 
0 \lla C_{-1} \overset{d_0}\lla  
C_0 \lla\dots\lla C_{n-1} \overset{\delta_n}\lla C_n \lla \dots \ . 
\]
Their $n$th homology modules are denoted by $\HH_n(\Delta,\Bbbk)$ 
and $\RH_n(\Delta,\Bbbk)$, respectively. 
If $L$ is a subposet of $P$ we have a canonical inclusion \  
$
\widetilde C_\bullet(\Delta(L),\Bbbk)\subseteq
\widetilde C_\bullet(\Delta(P),\Bbbk), 
$ 
the quotient \ 
$
C_\bullet(\Delta(P),\Delta(L),\Bbbk)= 
\widetilde C_\bullet(\Delta(P),\Bbbk)\big/
\widetilde C_\bullet(\Delta(L),\Bbbk)  
$
\ is called the \emph{simplicial chain complex of the pair} 
$\bigl(\Delta(P),\Delta(L)\bigr)$, and the $n$th 
\emph{relative homology} 
of this pair 
(with coefficients in $\Bbbk$) is  \ 
$
\HH_n(\Delta(P),\Delta(L), \Bbbk) = 
\HH_nC_\bullet(\Delta(P),\Delta(L),\Bbbk).  
$ 

Each element $a\in P$ generates a lower set  
$P_{\le a}=\{x\in P\mid x\le a\}$ and an \emph{open} lower set   
$P_{<a}=\{x\in P\mid x<a\}$, and  
we consider the 
order complexes 
$
\Delta_{\le a}=
\Delta(P_{\le a})
\text{ and }
\Delta_{<a}=
\Delta(P_{<a}). 
$
We call the number 
\[
d_P(a)=d(a)=\dim(\Delta_{\le a})
\] 
the \emph{dimension} of the element $a$ of $P$.  
For each $n$ we set 
\[
P^n=\{a\in P\mid d(a)\le n\},  
\]  
and we filter the order complex 
$\Delta$ by the subcomplexes 
\[
\Delta^n= 
\Delta(P^n)=
\bigcup_{d(a)\le n}\Delta_{\le a}.  
\]
As each $\Delta_{\le a}$ with $d(a)=n$ is a cone with apex $a$ 
over the base $\Delta_{<a}$, and $\Delta^n$ is obtained by attaching 
each such $\Delta_{\le a}$ to $\Delta^{n-1}$ along the inclusion 
$\Delta_{<a}\subseteq \Delta^{n-1}$, it seems appropriate that we 
refer to $\Delta^n$ as the \emph{conic $n$-skeleton} 
of $\Delta$. This induces 
a canonical bounded below filtration on the simplicial chain complex 
$C_\bullet(\Delta, \Bbbk)$, 
and in turn a canonical (fourth quadrant in this particular case)
spectral sequence, see e.g. \cite[Section 5.4]{Wei},   
which has in first page as horizontal strands the complexes 
\[
\mathcal C_{q,\bullet}(P,\Bbbk)\ = \quad 
0\lla \HH_0(\Delta^{q},\Delta^{q-1},\Bbbk)\lla \dots \lla 
      \HH_n(\Delta^{n+q},\Delta^{n+q-1},\Bbbk)\lla \dots 
\]
at the $y=-q$ level. 
Of particular importance for us will 
be the strand at $q=0$.

\begin{definition}\mlabel{D:conic-cx-of-poset}
We set 
$
\mathcal C_\bullet(P, \Bbbk)=
\mathcal C_{0,\bullet}(P, \Bbbk),  
$
and we call $\mathcal C_\bullet(P, \Bbbk)$ the 
\emph{conic chain complex} of $P$ over $\Bbbk$. 
\end{definition}

\begin{example}\label{Ex:basic-conic-example}  
Consider the poset structure $P$ on the set $\{a,b,c,\hat 1\}$ with 
Hasse diagram displayed below. 

     \begin{center}
     \begin{tikzpicture}[scale=0.9, vertices/.style={draw, fill=black, circle, inner sep=1pt}]
             \node [vertices, label=left:{$\smash{a}$}] (1) at (-1.5,1){};
             \node [vertices, label=right:{$\smash{b}$}] (2) at (0,1){};
             \node [vertices, label=right:{$\smash{c}$}] (3) at (1.5,1){};
             \node [vertices, label=above:{$\hat{1}$}] (123) at (0,2){};
     \foreach \to/\from in {1/123, 2/123, 3/123}
     \draw [-] (\to)--(\from);
     \end{tikzpicture}
     \end{center}

\medskip
\noindent
Here $P^{-1}=\emptyset$ and $P^0=\{a,b,c\}$, while $P^n=P$ for $n\ge 1$. 
Thus $\HH_0(\Delta^0,\Delta^{-1},\Bbbk)$ is 
naturally identified with the free submodule of    
$C_0(\Delta, \Bbbk)$ with basis $\{[a], [b], [c]\}$, 
and $\HH_1(\Delta^1,\Delta^0,\Bbbk)$ is 
naturally identified with the free submodule of 
$C_1(\Delta, \Bbbk)$ with basis 
$\{ [\hat 1, a] - [\hat 1, b], \ [\hat 1,b] - [\hat 1,c]\}$. 
Now the description of the maps in the first page 
of the spectral sequence of a filtration, see 
\cite[The Construction 5.4.6]{Wei}, shows that 
the differential $\partial_1$ of the conic chain complex 
$\mathcal C_\bullet(P,\Bbbk)$ is obtained 
simply by restriction from the differential of $C_\bullet(\Delta, \Bbbk)$. 
Thus our conic chain complex can be written, using the above bases, as 
\begin{equation*}
   \begin{CD} 
    {0 \longleftarrow \Bbbk^3} 
	@< {
            \begin{bsmallmatrix*}[r]
              1 &  0 \\
             -1 &  1 \\
              0 & -1    
            \end{bsmallmatrix*}
	} <\partial_1<
    {\Bbbk^{2}} \longleftarrow 0.  
   \end{CD}
\end{equation*}
Note also that $P$ is not an hcw-poset since 
$
\RH_0(\Delta_{<\hat 1}, \Bbbk)\cong 
\HH_1(\Delta^1, \Delta^0,\Bbbk)\cong \Bbbk^2$.      
\end{example}

In order to study the properties of  the conic chain complex, 
we need the following proposition, which is an immediate
consequence of the description, 
see \cite[The Construction 5.4.6]{Wei},    
of the maps in the first page 
of the spectral sequence of a filtration.

\begin{proposition}\label{P:conic-form} 
The conic chain complex 
$\mathcal C_\bullet(P, \Bbbk)$ has the form  
\[
0\la \HH_0(\Delta^{0},\Delta^{-1},\Bbbk)
 \la \dots \la \HH_{n-1}(\Delta^{n-1},\Delta^{n-2},\Bbbk)
 \overset{\partial_n}\lla 
      \HH_n(\Delta^{n},\Delta^{n-1},\Bbbk)\la \dots 
\]
and for $n\ge 1$ the map $\partial_n$ 
is the composition $\partial_n=\iota_{n-1}\delta_n$ 
where $\iota_n$ and $\delta_n$ are the 
canonical maps 
\[
\RH_n(\Delta^n,\Bbbk)\overset{\iota_n}\lra 
\HH_n(\Delta^n,\Delta^{n-1},\Bbbk)\overset{\delta_n}\lra 
\RH_{n-1}(\Delta^{n-1},\Bbbk)
\]
in the long exact sequence for relative homology 
of the pair $(\Delta^n,\Delta^{n-1})$.
\end{proposition}

In particular, since $\iota_n$ is injective for each $n$,  
we use it to identify $\RH_n(\Delta^n,\Bbbk)$ as 
a $\Bbbk$-submodule of $\HH_n(\Delta^n,\Delta^{n-1},\Bbbk)$, 
and thus we obtain for $n\ge 1$ the important equality 
\begin{equation}\elabel{E:conic-kernel}
\Ker\partial_n = \RH_n(\Delta^n, \Bbbk). 
\end{equation} 
Furthermore, for any $s$, whenever $a\ne b$ with $d(a)=d(b)=s$, 
we have an inclusion 
$\Delta_{\le a}\cap\Delta_{\le b}\subseteq \Delta^{s-1}$  and 
therefore  
\begin{equation}\elabel{E:rel-homology}
\HH_n(\Delta^{n+q},\Delta^{n+q-1},\Bbbk)\ = \negthickspace 
{\bigoplus_{d(a)=n+q}}
\negthickspace
\HH_n(\Delta_{\le a},\Delta_{<a},\Bbbk) \ \cong \negthickspace  
{\bigoplus_{d(a)=n+q}}
\negthickspace
\RH_{n-1}(\Delta_{<a},\Bbbk). 
\end{equation}

\begin{remark}\label{R:conic-not-cellular}  
As Proposition~\ref{P:conic-form} shows, 
the conic chain complex of a poset resembles in many 
ways the cellular chain complex of a CW-complex. 
Recall that if $X$ is a CW-complex with $i$-skeleton $X^i$ then the 
set $A_i$ of homology classes $\langle\sigma\rangle$ of the 
characteristic maps for the 
$i$-cells $\sigma$ of $X$ (with closure $\bar\sigma$ and boundary 
$\dot\sigma$) form a basis of 
$
\HH_i(X^i,X^{i-1},\Bbbk)\cong
\bigoplus_{\langle\sigma\rangle\in A_i}\HH_i(\bar\sigma,\dot\sigma, \Bbbk).  
$
For an $i$-cell $\sigma$ we then have a unique expression 
$
d_i(\langle\sigma\rangle)=
\sum_{\langle\tau\rangle\in A_{i-1}}a_{\sigma,\tau}\langle\tau\rangle
$
where $d_i$ is the differential in the cellular chain complex of $X$ 
over $\Bbbk$. The coefficient $a_{\sigma,\tau}\in\Bbbk$ is called the 
\emph{incidence coefficient} of $\sigma$ and $\tau$. We set $\sigma>\tau$ 
exactly when $a_{\sigma,\tau}\ne 0$ in $\Bbbk$. This generates a poset 
structure $P(X)$ on the set  
of all open cells of $X$ that we will refer 
to in the sequel as the \emph{face poset} of $X$ (over $\Bbbk$). 
We will see in 
Example~\ref{E:cellular-is-conic} that in the special case 
of a CW-poset $P$ its 
conic chain complex is indeed the cellular chain complex of a regular 
CW-complex that has $P$ as face poset
(recall that a CW-complex is \emph{regular} if the characteristic 
maps for its cells are homeomorphisms). However, in general there 
seems to be no natural way to 
obtain one of these two chain complex constructions from the other. 
For instance, from our Theorem~\ref{T:cl-supports-I} and  
Velasco's results \cite{Ve} the interested reader can deduce the 
existence of an hcw-poset $P$ whose conic chain complex 
\emph{cannot} be obtained 
as the cellular chain complex of any CW-complex having 
$P$ as face poset. 
Conversely, the face poset of a non-regular CW-complex in general no
longer reflects the underlying topology, and in particular its cellular 
chain complex cannot always be recovered from the poset alone.    
\end{remark}

Next, we describe in more detail 
the connection between the conic chain complex 
$\mathcal C_\bullet(P, \Bbbk)$ and 
the simplicial chain complex 
$C_\bullet\bigl(\Delta(P), \Bbbk\bigr)$. 
Notice that, just as in Example~\ref{Ex:basic-conic-example}, 
by dimensional considerations  
the submodule $Z_n(\Delta^n, \Delta^{n-1}, \Bbbk)$ of 
relative $n$-cycles inside $C_n(\Delta^n, \Bbbk)$ 
is isomorphic 
to $\HH_n(\Delta^n,\Delta^{n-1},\Bbbk)$. Combined with the 
canonical inclusion of oriented simplicial $n$-chains 
$C_n(\Delta^n, \Bbbk) \lra C_n(\Delta, \Bbbk)$, this induces 
for each $n$ a canonical inclusion map of $\Bbbk$-modules 
\begin{equation}\label{E:conic-inclusion}
\HH_n(\Delta^n,\Delta^{n-1},\Bbbk) \lra 
C_n(\Delta, \Bbbk).  
\end{equation}
The following proposition is now immediate from the 
definitions and Proposition~\ref{P:conic-form}. 

\begin{proposition}\label{P:conic-inclusion}
The maps \eqref{E:conic-inclusion} identify canonically 
the conic chain complex 
$\mathcal C_\bullet(P, \Bbbk)$ as a subcomplex of the simplicial 
chain complex  $C_\bullet(\Delta, \Bbbk)$. 
\end{proposition}

We will need an even more explicit description of the 
elements of $\mathcal C_\bullet(P, \Bbbk)$ inside 
$C_\bullet(\Delta, \Bbbk)$, and for this we introduce 
some notation. Recall that we orient the faces of 
$\Delta$ by 
ordering their vertices in decreasing order, and  
we write $[\thinspace]$ for the oriented empty face 
of $\Delta$. 
If $\sigma=[a_0,\dots, a_n]$ is an oriented face of 
$\Delta_{<a}$, then we write $[a,\sigma]$ for the oriented 
face $[a, a_0, \dots, a_n]$ of $\Delta_{\le a}$; in 
particular we have 
$
\bigl[ a, [\thickspace]\bigr] = 
[a].
$ 
Also, if 
$
w=\sum c_\sigma\sigma
$  
is an $n$-chain of $\Delta_{<a}$ then we write 
$[a, w]$ for the $(n+1)$-chain of $\Delta_{\le a}$ 
\begin{equation}\elabel{E:cone-over-a}
[a, w] = \sum c_\sigma [a, \sigma]. 
\end{equation}
With this notation the next proposition 
is straightforward to verify. 

\begin{proposition}\label{P:conic-elements} 
The elements of the conic chain complex 
$\mathcal C_\bullet(P,\Bbbk)$ that 
belong to a component 
$\HH_n(\Delta_{\le a}, \Delta_{<a}, \Bbbk)$ 
with $d(a)=n$ are exactly all
elements of $C_n(\Delta, \Bbbk)$ of the form 
$[a, z]$ where $z$ is an $(n-1)$-cycle of 
$\Delta_{<a}$. 
\end{proposition}

In particular, we have 
\[
\mathcal C_0(P,\Bbbk)=
\bigoplus_{d(a)=0}\Bbbk\cdot[a], 
\]
and therefore the simplicial boundary operator 
$
C_0(\Delta, \Bbbk) \lra 
C_{-1}(\Delta, \Bbbk) = \Bbbk\cdot[\thinspace] 
$
induces by restriction an augmentation map 
\[
\partial_0\: 
\mathcal C_0(P, \Bbbk) \lra 
\Bbbk\cdot[\thinspace].  
\]

\begin{definition}
We set 
$\mathcal C_{-1}(P, \Bbbk)=\Bbbk\cdot[\thinspace]$ 
and we write $\widetilde{\mathcal C}_\bullet(P,\Bbbk)$ 
for the chain complex obtained by augmenting 
$\mathcal C_\bullet(P, \Bbbk)$ with the map 
$
\partial_0\: 
\mathcal C_0(P, \Bbbk)\lra \mathcal C_{-1}(P, \Bbbk).
$  
\end{definition}

\begin{remarks}\label{R:augmented-conic}
(a) 
It is straightforward to check that 
\eqref{E:conic-kernel} holds also for $n=0$. 

(b) 
An action of a group $G$ on $P$ induces in a canonical way an 
action of $G$ on the chain complexes 
$\mathcal C_\bullet(P,\Bbbk)$ and 
$\widetilde{\mathcal C}_\bullet(P,\Bbbk)$. 

(c) 
If $L$ is a lower set in $P$, then 
$\widetilde{\mathcal C}_\bullet(L,\Bbbk)$ is naturally a subcomplex 
of $\widetilde{\mathcal C}_\bullet(P,\Bbbk)$. Indeed, this is clear from 
Proposition~\ref{P:conic-elements} since for each $a\in L$ we 
have $\Delta(L_{\le a})=\Delta(P_{\le a})$, hence also 
$\Delta(L_{< a})=\Delta(P_{< a})$ and $d_L(a)=d_P(a)$.   
\end{remarks}

Next, we identify useful conditions under which the homology of 
the conic chain complex 
$\mathcal C_\bullet(P,\Bbbk)$ reflects the topology of $P$.     

\begin{proposition}\mlabel{P:conic-simplicial} 
Suppose that for all $a\in P$ we have that $d(a)$ is finite, and  
$\RH_m\bigl(\Delta_{<a}, \Bbbk\bigr)=0$ for $m\le d(a)-2$. 

Then the canonical inclusion \ 
$\mathcal C_\bullet(P,\Bbbk)\lra C_\bullet\bigl(\Delta(P), \Bbbk\bigr)$ \  
from Proposition~\ref{P:conic-inclusion} is a homology isomorphism. 
\end{proposition}

\begin{proof}
Under the assumptions on $P$ our bounded below filtration of  the 
simplicial chain complex $C_\bullet(\Delta(P), \Bbbk)$ is exhaustive, hence   
by \cite[Classical Convergence Theorem 5.5.1]{Wei} our spectral sequence 
converges to the homology of $\Delta(P)$. Furthermore,  
the assumptions on $P$ 
imply 
$\HH_n(\Delta_{\le a}, \Delta_{<a}, \Bbbk)= 0$ for $n\ne d(a)$.
Thus, in view of \eqref{E:rel-homology}, our spectral 
sequence collapses on the first page to the conic chain 
complex $\mathcal C_\bullet(P, \Bbbk)$. Next, 
by Proposition~\ref{P:conic-elements} the filtration induced 
on $\mathcal C_\bullet(P,\Bbbk)$ via the inclusion map is just the 
standard filtration along homological degree. Therefore 
the corresponding spectral sequence also 
collapses on the first page to the conic chain complex, and the 
map of spectral sequences induced by the inclusion of chain complexes 
is an isomorphism from the first page on.   
\end{proof}

\begin{example}\label{E:NonCWPoset}
Taking the poset $P$ whose Hasse diagram is 
     \begin{center}
     \begin{tikzpicture}[scale=0.9, vertices/.style={draw, fill=black, circle, inner sep=1pt}]
             \node [vertices, label=right:{}] (1) at (-2.25+0,1.33333){};
             \node [vertices, label=right:{}] (2) at (-2.25+1.5,1.33333){};
             \node [vertices, label=right:{}] (3) at (-2.25+3,1.33333){};
             \node [vertices, label=right:{}] (4) at (-2.25+4.5,1.33333){};
             \node [vertices, label=right:{}] (12) at (-3+0,2.66667){};
             \node [vertices, label=right:{}] (13) at (-3+1.5,2.66667){};
             \node [vertices, label=right:{}] (23) at (-3+3,2.66667){};
             \node [vertices, label=right:{}] (24) at (-3+4.5,2.66667){};
             \node [vertices, label=right:{}] (34) at (-3+6,2.66667){};
             \node [vertices, label=right:{}] (1234) at (-0+0,4){};
     \foreach \to/\from in {1/12, 1/13, 2/12, 2/23, 2/24, 3/13, 3/23, 3/34, 4/24, 4/34, 12/1234, 13/1234, 23/1234, 24/1234, 34/1234}
     \draw [-] (\to)--(\from);
     \end{tikzpicture}
     \end{center}
we see that every open lower set $P_{<a}$ is a homology 
sphere, except for the open lower set $P_{<\hat 1}$ generated 
by the maximal element $\hat 1$, where we have 
$\RH_1(\Delta_{<\hat 1}, \Bbbk)\cong \Bbbk^2$ and 
$\RH_i(\Delta_{<\hat 1}, \Bbbk)=0$ for $i\ne 1$.  
Thus $P$ satisfies the assumptions of 
Proposition~\ref{P:conic-simplicial} but is not an 
hcw-poset. 
Under one choice of bases for the corresponding relative homology modules, 
the conic chain complex of $P$ takes the form  
\begin{equation*}
   \begin{CD} 
    {0 \leftarrow \Bbbk^4} 
	@< {
            \begin{bsmallmatrix*}[r]
             -1 & 0 &  -1 & -1 & 0 \\
              0 &  -1 & 1 &  0 &  -1 \\
              1  & 1 &  0  & 0 &  0   \\
              0 &  0 &  0 &  1  & 1   \\
            \end{bsmallmatrix*}
	} <\partial_1<
    {\Bbbk^{5}} 
	@< {
               \begin{bsmallmatrix*}[r]
               -1 & 0  \\
               1 & 0 \\
               1 & 1 \\
               0 & -1  \\
               0 & 1 \\
            \end{bsmallmatrix*}
	} <\partial_2< 
    {\Bbbk^{2}} \leftarrow 0,  
   \end{CD}
\end{equation*}
and it is straightforward to check that it is acyclic. 
Of course, this also follows from Proposition~\ref{P:conic-simplicial} 
as $\Delta(P)$ is a cone with apex $\hat 1$. 
\end{example}

\begin{remarks}\mlabel{T:conic-poset}
(a) 
Here, and in Examples~\ref{E:P-grading-examples}(b,c) and~\ref{E:cellular-is-conic}, and 
Remarks~\ref{R:conic-poset-resolution}, 
we discuss the relationship between the 
conic chain complex and the \emph{poset construction} 
studied in \cite{Cl}. These observations are only used 
to compute examples and 
to point out that the notion of cellular resolution is just 
a special case of our notion of resolution supported on a 
poset, and play no role in the proofs of our main results. 
Thus, in order to streamline our presentation, rather than 
recalling them here, we refer the reader to \cite{Cl} 
and \cite{ClTch} for 
the definition and basic properties of the poset 
construction. 

(b)  
Suppose $P$ is a finite poset.   
It is immediate from the description of the maps in the 
first page of the spectral sequence of a filtration, 
see \cite[The Construction 5.4.6]{Wei}, and the 
definitions in \cite{Cl}, that 
the chain complex 
\smash[b]{
$   
\bigoplus_{q\ge 0} \mathcal C_{q,\bullet}(P, \Bbbk)
$
}
is canonically isomorphic to the part in non-negative 
homological degrees of the shifted poset construction 
\smash[t]{$\mathcal D_\bullet(\widetilde P,\Bbbk)[1]$} 
from \cite{Cl}, where \smash[t]{$\widetilde P$} is the
ranked poset obtained from $P$ 
by using the canonical procedure from 
\cite[Proposition~A.9]{Cl}.  Thus, 
when the poset $P$ satisfies the assumptions of 
Proposition~\ref{P:conic-simplicial} we have a 
canonical isomoprhism 
\[
\widetilde{\mathcal C}_\bullet(P, \Bbbk) \cong
\mathcal D_\bullet(\widetilde P,\Bbbk)[1].
\]
In particular, this holds when $P$ is a homology CW-poset.  
\end{remarks}

\section{Homogenization, and resolutions supported 
on posets}\mlabel{S:Homogenization}

Let $P$ be a poset structure on a set $B$. Consider a chain complex of 
$\Bbbk$-modules 
\[
\mathcal F_\bullet \ = \qquad 
                 \dots \lla F_{n-1}
          \overset{f_n}\lla F_n \lla \dots 
\] 
A \emph{$P$-grading} on $\mathcal F_\bullet$ 
is a direct sum decomposition of $\Bbbk$-modules 
$
F_n = \bigoplus_{a\in B} F_{n,a}
$
for each $n$, such that for all $a,b\in B$ and all 
$x\in F_{n,a}$ the component $f_n^b(x)$ of $f_n(x)$ 
inside $F_{n-1,b}$ is zero when $b\not\leq a$ in $P$.

\begin{examples}\mlabel{E:P-grading-examples}
(a) 
For any poset $P$ 
the conic chain complex $\mathcal C_\bullet(P,\Bbbk)$ 
is naturally $P$-graded with the component 
$F_{n,a}$ zero when $n\ne d(a)$ and with 
$F_{d(a),a}=\HH_{d(a)}(\Delta_{\le a},\Delta_{<a},\Bbbk)$.  
This extends naturally to a $\smash[t]{\check P}$-grading 
of $\smash[t]{\widetilde{\mathcal C}_\bullet(P,\Bbbk)}$ by setting 
$F_{n,\check 0}=0$ for $n\ne -1$ and 
$F_{-1,\check 0}={\mathcal C}_{-1}(P,\Bbbk)$. 

(b) 
For any finite poset $P$ the shifted poset construction 
complex $\smash[t]{\mathcal D_\bullet(\widetilde P, \Bbbk)[1]}$, see  
Remark~\ref{T:conic-poset}, is naturally $\smash[t]{\check P}$-graded with  
$
\smash[t]{F_{n,a}= \RH_{n-1}(\Delta_{<a},\Bbbk)}   
$
for $a\ne \check 0$, and with $F_{-1,\check 0}=F_{-1}$ and $F_{n,\check 0}=0$ 
for $n\ne -1$.  
When $P$ is a homology CW-poset the canonical 
isomorphism from Remark~\ref{T:conic-poset}(b) respects 
the $\smash[t]{\check P}$-grading.

(c) 
The cellular chain complex $C_\bullet(X,\Bbbk)$ of 
a CW-complex $X$ with coefficients in $\Bbbk$:  
here $F_n=\HH_n(X^n, X^{n-1},\Bbbk)$, the poset 
$P=P(X)$ is the face poset of $X$, see Remark~\ref{R:conic-not-cellular}, 
and for an (open) cell 
$\sigma\in P$ with closure $\bar\sigma$ and boundary $\dot\sigma$ 
the component $F_{n,\sigma}$ is zero if 
$\dim\sigma\ne n$ and 
$
F_{\dim\sigma, \sigma}=
\HH_{\dim\sigma}(\bar\sigma,\dot\sigma, \Bbbk). 
$ 
This extends naturally to a $\smash[t]{\check P}$-grading on 
$\smash[t]{\widetilde C_\bullet(X,\Bbbk)}$ by setting $F_{n,\check 0}=0$ 
for $n\ne -1$, and $F_{-1,\check 0}=C_{-1}(X,\Bbbk)$. 
In the case when $X$ is finite and regular we also have 
$\smash[t]{P=\widetilde P}$, and by construction 
the canonical isomorphism 
\[
\widetilde C_\bullet(X, \Bbbk) \lra \mathcal D_\bullet(P, \Bbbk)[1]
\]
from \cite[Theorem~1.7]{ClTch} respects the $\check P$-grading.

(d) 
For any $P$ the simplicial chain complex 
$C_\bullet(\Delta, \Bbbk)$ is naturally $P$-graded,  
where $F_{n,a}$ is the subspace of $C_n(\Delta, \Bbbk)$ 
with basis all oriented $n$-faces of $\Delta$ of the 
form $[a, a_1, \dots, a_n]$. 
This extends naturally to a $\smash[t]{\check P}$-grading on 
$\smash[t]{\widetilde C_\bullet(\Delta,\Bbbk)}$ by setting $F_{n,\check 0}=0$ 
for $n\ne -1$, and $F_{-1,\check 0}=C_{-1}(\Delta,\Bbbk)$. 
In particular, the canonical 
inclusion 
\[
\widetilde{\mathcal C}_\bullet(P, \Bbbk) \lra 
\widetilde C_\bullet(\Delta, \Bbbk)
\]
respects the $\check P$-grading. 
%

(e) 
The $T$-complex $T_\bullet(\phi)$ from \cite{Tch} 
associated with a representation 
$\phi$ of a matroid $\bf M$ over a field $\Bbbk$: here 
the poset $P$ is the lattice of T-flats of $\bf M$, 
and for a T-flat $A$ of $\bf M$ the component 
$F_{n,A}$ is zero when the level of $A$ is not $n$, 
and when the level is $n$ we have $F_{n,A}=T_A(\phi)$, 
the T-space of the T-flat $A$ with respect to the 
matroid representation map $\phi$. 
\end{examples}

\begin{definition}
A morphism of posets 
$
\deg\: P \lra \mathbb Z^m 
$
is called a \emph{$\mathbb Z^m$-labeling}, or simply a 
\emph{labeling} of $P$. We call the pair $Z=(P,\deg)$ a 
\emph{labeled poset}, and write 
$a\in Z$ when $a$ is an element of $P$. 
For an element $\alpha\in\mathbb Z^m$ we denote by  
$P_{\deg\le \alpha}$ 
the set of elements $x\in Z$ such that $\deg(x)\le \alpha$, 
and by $Z_{\le\alpha}$ the labeled poset $(P_{\deg\le\alpha}, \deg)$.  
\end{definition} 

Let $Z=(P,\deg)$ be a labeled poset. 
We review the formalism of homogenizing a $P$-graded 
chain complex $\mathcal F_\bullet$ with respect to the labeled poset $Z$.  
This is a standard technique for studying 
resolutions of monomial ideals and multigraded modules, 
see e.g. \cite{BaPeSt, BaSt, BaWe, PeVe} for cases when 
each component $F_{n,a}$ is free of rank $\le 1$, and 
\cite{ChTch, Tch, Cl} for other cases. 

Let $R=\Bbbk[x_1,\dots, x_m]$ be a polynomial ring over 
$\Bbbk$ with the standard $\mathbb Z^m$-grading. 
If $M=\bigoplus_{\alpha\in\mathbb Z^m}M_\alpha$ is a 
$\mathbb Z^m$-graded $R$-module and $\gamma\in\mathbb Z^m$ 
then the \emph{twist} $M(\gamma)$ 
is the $\mathbb Z^m$-graded $R$-module with 
$M(\gamma)_\alpha=M_{\gamma+\alpha}$.   
The \emph{homogenization} of $\mathcal F_\bullet$ (with 
respect to the labeled poset $Z$) is the chain complex 
$\widehat{\mathcal F}_\bullet$ of $\mathbb Z^m$-graded 
$R$-modules 
\[
\widehat{\mathcal F}_\bullet \ = \qquad 
                      \dots \lla \widehat F_{n-1}
          \overset{\hat f_n}\lla \widehat F_n \lla \dots \ , 
\] 
where for each $n$ we have 
$
\widehat F_n=\bigoplus_{a\in Z} F_{n,a}\otimes_\Bbbk R(-\deg a),  
$
and for $x\in F_{n,a}$ one has 
\[
\hat f_n(x\otimes 1) = 
\sum_{b\le a} f_n^b(x)\otimes x^{\deg a -\deg b}.  
\]
In particular, for $\alpha\in\mathbb Z^m$ the homogeneous 
component of $\widehat F_n$ of degree $\alpha$ is the 
$\Bbbk$-submodule 
\begin{equation}\elabel{E:homogeneous-components}
\widehat F_{n,\alpha}= 
\bigoplus_{a\in Z_{\le\alpha}}F_{n,a}\otimes x^{\alpha-\deg a}. 
\end{equation}

We are now ready to introduce the main new notion of this 
paper.

\begin{definition}\mlabel{D:poset-support}
Let $Z=(P,\deg)$ be a labeled poset. 
We write $\mathcal C_\bullet(Z,\Bbbk)$ for the homogenization 
of $\mathcal C_\bullet(P,\Bbbk)$ with respect to $Z$ and call it 
the \emph{conic chain complex} of the labeled poset $Z$. 
We say that a chain complex $\mathcal F_\bullet$ of 
$\mathbb Z^m$-graded free $R$-modules is 
\emph{supported on} $Z$ if 
$\mathcal F_\bullet$ is isomorphic to 
the conic chain complex $\mathcal C_\bullet(Z, \Bbbk)$. 
We say that a (unlabeled) poset $P$ \emph{supports} $\mathcal F_\bullet$ if 
there exists a labeling $\deg$ of $P$ such that $\mathcal F_\bullet$ 
is supported on the labeled poset $(P,\deg)$. 
\end{definition}

\begin{example}\label{E:cellular-is-conic}
Let $X$ be a finite regular CW-complex with face poset $P$. Then  
a chain complex $\mathcal F_\bullet$ is supported on $X$ 
in the sense of \cite{BaSt} exactly if it is supported on 
the face poset $P$ in our sense; in particular the 
notion of cellular resolution is just a special case of 
the notion of resolution supported on a poset. Indeed, 
by \cite{Bjo} the poset $P$ is an hcw-poset over $\Bbbk$, hence 
Remark~\ref{T:conic-poset}(b) and 
Examples~\ref{E:P-grading-examples}(b) and 
\ref{E:P-grading-examples}(c) yield 
that the conic chain complex 
$\mathcal C_\bullet(P,\Bbbk)$ and the cellular chain complex 
$C_\bullet(X, \Bbbk)$ are isomorphic as $P$-graded chain 
complexes of $\Bbbk$-modules.   
\end{example}

\begin{remarks}\label{R:conic-poset-resolution}
There is a subtle distinction between the notion of 
\emph{poset resolution} discussed in \cite{Cl} and  
our notion of  complex supported 
on a poset: 

(a) 
For example, 
when $char(\Bbbk)=2$ the ideal 
{\small
\[
I =
( x_1x_2x_3,  x_1x_3x_5,  x_1x_4x_5, 
 x_2x_3x_4,  x_2x_4x_5, 
 x_1x_2x_6,  x_1x_4x_6, 
 x_2x_5x_6,  x_3x_4x_6,  x_3x_5x_6 )
\]
}
\negmedspace
\negthinspace
in $\Bbbk[x_1,\ldots,x_6]$ 
has  minimal free resolution supported in our sense on 
the poset $P(\mathcal F_\bullet,B)$ from Example~\ref{E:PPincidence}, 
but this poset 
does not produce a poset resolution.  
We examine this ideal in greater detail 
beginning in Example~\ref{E:PPRes}. 

(b)
On the other hand, the ideal 
$(axyz,bxyz,cxyz,dx,abcdy,abcdz)$ 
in the polynomial ring 
$\Bbbk[a,b,c,d,x,y,z]$ has minimal resolution which 
is not supported in our sense on its lcm-lattice \cite{GaPeWe}, but 
its lcm-lattice produces a 
poset resolution using the construction of \cite{Cl}. 

(c)
The two notions agree precisely when $P$ is a finite poset 
satisfying the conditions of Proposition~\ref{P:conic-simplicial}, 
in particular they agree when $P$ is a finite hcw-poset.   
\end{remarks}

The following standard fact will be needed later. 

\begin{proposition}\label{acyclicity-criterion}
Let $Z=(P,\deg)$ be a labeled poset. 
The conic chain complex $\mathcal C_{\bullet}(Z,\Bbbk)$ 
is acyclic
if and only if 
for each $\alpha\in\mathbb Z^m$ 
the chain complex 
$\mathcal C_\bullet(P_{\deg\le\alpha}, \Bbbk)$
is acyclic. 
\end{proposition}

\begin{proof}
From degree considerations it is clear that the homogenization 
$\widehat{\mathcal F}_\bullet$ of $\mathcal C_\bullet(P,\Bbbk)$ 
is a resolution precisely when 
each graded strand 
\[
\widehat{\mathcal F}_{\bullet,\alpha} \ = \quad 
0\lla \widehat F_{0,\alpha} \lla \dots \lla \widehat F_{n-1,\alpha}
          \overset{\hat f_n}\lla \widehat F_{n,\alpha} \lla \dots 
\] 
is a resolution. However, by 
\eqref{E:homogeneous-components} and the definitions, 
$\widehat{\mathcal F}_{\bullet,\alpha}$ can be canonically 
identified with $\mathcal C_\bullet(P_{\deg\le\alpha}, \Bbbk)$. 
\end{proof}

\section{Bases with minimal support}\mlabel{S:MinSupp}

Let $R$ be a 
ring,  
and consider a nonnegative chain complex  
\[
\mathcal F_\bullet \ = \quad 
0\lla F_0 \lla \dots \lla F_{n-1}
          \overset{f_n}\lla F_n \lla \dots 
\]
of free $R$-modules; we do not assume that 
$\mathcal F_\bullet$
is acyclic. 
Let 
\[
f_0 \: F_0 \lra M 
\] 
be 
an epimorphism of $R$-modules with $\im f_1\subseteq\Ker f_0$. 
{\bf If} it is given that $\mathcal F_\bullet$ is a resolution of $M$, 
{\bf then}, \emph{unless otherwise specified}, 
we will identify $M$ with $\coker(f_1)$ and 
take $f_0$ to be the canonical projection.     
For each $n\ge 0$ let $B_n$ 
be a basis of $F_n$, and let $B=\coprod B_n$ be their 
disjoint union. For any 
$y=\sum_{c\in B_n}a_c\, c\in F_n$ the \emph{support} of $y$ 
(with respect to $B$ or $B_n$) is the set
\[
\supp y = \supp_B y = \supp_{B_n}(y) = 
\{c\in B_n\mid a_c\ne 0\}. 
\]

\begin{definition}\mlabel{D:basis-minimal-support}
(a) When $y$ is in $F_n$  
we say that $y$ is a
\emph{cycle with minimal support} relative to the 
basis $B$  if $y$ is in $\Ker f_n$  
and the support of $y$ does not 
contain properly the support of any nonzero element 
of $\Ker f_n$.  

(b) If $y\in F_n$ with $n\ge 1$ the support of $f_n(y)$ 
in $F_{n-1}$ is called the \emph{boundary support} of $y$. 

(c) We say that $B$ is a 
\emph{basis with minimal (boundary) support} for $\mathcal F_\bullet$ 
if for each $n\ge 1$ and each $b\in B_n$ the element  
$f_n(b)\in F_{n-1}$ is a cycle with minimal support relative 
to the basis $B$. 
\end{definition}

\begin{remarks}\mlabel{R:change-basis}
(a) It is clear that if we replace each $b\in B$ by an 
associate $b'$, then the resulting bijective correspondence 
between $B$ and the new basis $B'$ commutes with taking 
supports and is an isomorphism of incidence posets (see 
Definition~\ref{D:incidence-poset}). In    
particular $B'$ is a basis with minimal support if and    
only if $B$ is.    

(b) 
Suppose $R$ is a field. Then  $y\in F_n$ is a cycle with 
minimal support if and only if the kernel $Z^y$ of 
the restriction of $f_n$ to 
$
\bigoplus_{b\in\supp y} Rb
$
is exactly $Ry$. Indeed, let $y=\sum_{b\in\supp y}a_b b$. 
If $Z^y=Ry$ then every non-zero cycle with support contained 
in $\supp y$ is a nonzero multiple of $y$ hence cannot have 
support properly contained in $\supp y$. Conversely, assume $y$ 
is a cycle of minimal support and let $0\ne z\in Z^y$. Then 
$z=\sum_{b\in\supp y}c_b b$ with $c_b\ne 0$ for some $b\in\supp y$, and  
therefore $a_b^{-1}c_b y - z$ has support properly contained in 
$\supp y$ hence equals $0$. 

(c) It is a standard exercise in linear algebra that when 
$R$ is a field, and $\mathcal F_\bullet$ is a resolution of $M$, 
then $\mathcal F_\bullet$ has a basis with minimal support. 

(d)
Suppose $R$ is a field, and $L$ is a proper $R$-submodule of 
$\Ker f_j$ for some $j\ge 0$. 
Then there exists in $\Ker f_j$ a cycle of minimal support that is 
not an element of $L$. Indeed let $y\in \Ker f_j\setminus L$ be an 
element with 
support minimal among the supports of elements of $\Ker f_j\setminus L$. 
If $z\ne 0$ is a cycle with support properly contained in $\supp y$, 
then we must have $z\in L$, hence for an appropriate $0\ne c\in R$ 
the element $y-cz$ is in $\Ker f_j\setminus L$ and has support properly 
contained in $\supp y$, a contradiction. Therefore $y$ is the desired 
cycle of minimal support.    

(e) 
Suppose $R$ is a field, and 
$r=\dim_R \bigl(\,\Ker f_j/\im f_{j+1}\bigr)$ is finite for some $j$. 
Then there exist in $\Ker f_j$ cycles of minimal support 
$z_1,\dots, z_{r}$ that induce a basis of $\Ker f_j/\im f_{j+1}$. Indeed, 
if $z_1,\dots,z_{k-1}$ have already been constructed, set 
$L_k=\im f_{j+1}+ (z_1,\dots, z_{k-1})$ and use (d) to pick a cycle of minimal 
support $z_k$ inside $\Ker f_j\setminus L_k$.

(f) 
The \emph{simple syzygies} of 
Charalambous and Thoma \cite{ChThoma1,ChThoma2} 
are a toric analogue for the cycles with minimal support 
of monomial ideals. See Remarks~\ref{R:toric-remarks} for 
more details on the relationship between these two notions.  
\end{remarks}

The following two lemmas are presumably 
familiar to experts, and we include their routine proofs 
for completeness. Throughout all arguments, we assume that   
$R=\Bbbk[x_1,\dots, x_m]$ is a polynomial ring over 
a field $\Bbbk$ with the standard $\mathbb Z^m$-grading.

\begin{lemma}\label{L:dehomogenization}
Suppose $\mathcal F_\bullet$ is a $\mathbb Z^m$-graded minimal 
free reolution of a 
$\mathbb Z^m$-graded $R$-module $M$. Let $B$ be a basis  
of $\mathcal F_\bullet$ consisting of homogeneous elements. 
Let \/ 
$
\overline{\mathcal F}_\bullet = 
\mathcal F_\bullet/(x_1-1,\dots, x_m-1)\mathcal F_\bullet,  
$
let \/ $\ol M=M/(x_1-1,\dots, x_n-1)M$, 
let $\overline B$ be the induced by $B$ basis of 
$\overline{\mathcal F}_\bullet$, and 
let $\bar f_0\: \ol F_0 \lra \ol M$ be the induced by $f_0$ map. 
Then: 

(a) For each $n\ge 0$ and each $\bar b\in \overline B$ 
one has $\bar f_n(\bar b)\ne 0$. 

(b) $\overline B$ is a basis with minimal support if and only if  
$B$ is. 
%
\end{lemma}

\begin{proof} 
Part (a) is immediate from the minimality of the resolution 
$\mathcal F_\bullet$. 
Next, we note the basic fact that 
\begin{equation}\label{Eq:dehomogenization}
\mathcal F_\bullet\otimes_R T \cong 
\overline{\mathcal F}_\bullet\otimes_\Bbbk T
\end{equation}
as chain complexes of 
$\mathbb Z^m$-graded $R$-modules, where $T=R[x_1^{-1},\dots, x_m^{-1}]$ 
is the Laurent polynomial ring over $\Bbbk$ with the standard 
$\mathbb Z^m$-grading, and the elements of 
$
\overline{\mathcal F}_\bullet\otimes 1 
$
are of degree $0$. In particular, an 
element 
$\smash{\bar z=\sum_{\bar b\in \supp(\bar z)} a_b \bar b}$ 
(where each $a_b\in\Bbbk$) 
is in $\smash{\Ker\bar f_n}$  if and only if for some $\gamma\in\mathbb N^m$ 
the homogeneous element 
\[
z= x^\gamma\sum_{\bar b\in\supp(\bar z)} a_bx^{\alpha-\deg b} b
\] 
is in $\Ker f_n$,   
where $\alpha\in\mathbb Z^m$ is the coordinate-wise 
maximum of the degrees $\deg b\in\mathbb Z^m$ with 
$\bar b\in\supp \bar z$. 
Part (b) is now straightforward from the fact 
that for each homogeneous element $y\in F_n$ one has that 
$\overline{\supp_{B} y\vphantom{\bar y}} = \supp_{\overline B}\bar y$.    
\end{proof}

\begin{lemma}\mlabel{L:multigraded-basis-minimal-support}
Let $\mathcal F_\bullet$ be a $\mathbb Z^m$-graded minimal 
free resolution of a 
torsion-free $\mathbb Z^m$-graded 
$R$-module $M$. Let $G$ be a group with a 
free action on $\mathcal F_\bullet$ 
and let $B'$ be a homogeneous basis of $\mathcal F_\bullet$ that is 
preserved under the action of $G$. 
Suppose that within each $G$-orbit of $B'$  
distinct elements have incomparable $\mathbb Z^m$-degrees.  
Let $C$ be any homogeneous basis of $F_0$ that is preserved  
under the action of $G$. 

Then $\mathcal F_\bullet$ has a homogeneous basis  
$B$ that is preserved under the action of $G$, 
has minimal boundary support, and satisfies $B_0=C$. 
\end{lemma}

\begin{proof}
We construct the basis $B_k$ inductively on $k$, with 
the case $k=0$ being the base, where we set $B_0=C$. 
Suppose $B_k$ has already been constructed as desired. 
Let $Z=\Ker f_k$ and  
let $B'_{k+1}$ be our given homogeneous basis of $F_{k+1}$ that 
is preserved under the action of $G$. 
Pick a $G$-orbit $O$ in $B'_{k+1}$, and 
suppose that  for some $b'\in O$ 
its boundary 
is not an element of minimal support in $Z$. 
We will construct iteratively a sequence of homogeneous elements 
$b'=b_0, b_1,\dots, b_n=b''$, all of degree $\deg b'$, such that 
$f_{k+1}(b_n)$ is a cycle of minimal 
support, and for each $1\le i\le n$ the following three conditions hold: 
\begin{enumerate} 
\item 
the set $(B\setminus\{b'\})\cup\{b_i\}$ is 
a homogeneous basis of $F_{k+1}$; 

\item 
we have strict inclusions 
$\supp_{B_k}f_{k+1}(b_i)\subsetneq\supp_{B_k}f_{k+1}(b_{i-1})$; and 

\item  
$\supp_{B'}(b_i)\cap O = \{b'\}$. 
\end{enumerate} 
So, we set $b_0=b'$, and suppose $b_i$ has already been 
constructed as desired. If 
$
z=f_{k+1}(b_i)=\sum_{b\in\supp_{B_k}(z)} a_bx^{\deg z-\deg b} b
$ 
is already a cycle of minimal support 
then we have $n=i$ and we are done with our sequence. Otherwise, 
there is a homogeneous element $0\ne z'\in Z$ 
with $\supp_{B_k}(z')\subsetneq\supp_{B_k}(z)$, and 
we can construct $b_{i+1}$ as follows. Let 
$z'=\sum_{b\in\supp_{B_k}(z')} a'_bx^{\deg z'-\deg b} b$. Since $F_k/Z$ is 
torsion-free, we can assume in addition that $\deg z'$ is 
the coordinate-wise maximum of the degrees $\deg b\in\mathbb Z^m$ 
with $b\in\supp_{B_k}(z')$. In particular, 
$\deg z'\le \deg z =\deg b_i=\deg b'$. 
Since $\mathcal F_\bullet$ is a resolution of $M$, there is 
a homogeneous $w=\sum_{b\in\supp_{B'}(w)}c_b b\in F_{k+1}$ 
such that $\deg w=\deg z'$ and $f_{k+1}(w)=z'$. Notice that 
$\supp_{B'}(w)$ consists only of elements of degrees $\le\deg b'$, thus it  
can contain at most one element from the orbit $O$, 
namely $b'$ itself.  
If $b'\in \supp_{B'}(w)$ then we must have 
$\deg w=\deg b'$, therefore $0\ne c_{b'}\in \Bbbk$, and we can 
set $b_{i+1}=w$. If not, then 
$b'\notin\supp_{B'}(w)$, and selecting some $b\in\supp_{B'}(w)$ 
we can set $b_{i+1}=a'_b b_i - a_bx^{\deg b' -\deg z'}w$. 

Now that our sequence has been constructed, we have 
in our hands the homogeneous element $b''$ of same degree as
$b'$, with $f_{k+1}(b'')$ a cycle of minimal support,  
and, furthermore, the intersection of   
$\supp_{B'}(b'')$ with the orbit $O$ is exactly $b'$.  
Therefore we can use the free 
action of $G$ and replace in $B'_{k+1}$ all elements of $O$ by 
the elements of the orbit $O'$ of $b''$, thus obtaining 
a homogeneous basis $B''_{k+1}$ of $F_{k+1}$ preserved under the action of $G$.  

Since the $G$-orbits of $B''_{k+1}$ are obtained from the $G$-orbits of 
$B'_{k+1}$ by replacing $O$ with $O'$, and all 
elements of $O'$ have boundaries that are cycles of minimal support,  
iterating the previous arguments over all orbits of $G$ in $B_{k+1}'$     
yields the desired homogeneous basis with minimal support $B_{k+1}$ and 
completes our induction argument.  
\end{proof}

\begin{example}\mlabel{E:TwoResolutions}
Although bases of minimal support always exist, 
they are not unique. Consider the monomial ideal 
$M=(vy, wz, vwx, xyz)$ in the 
polynomial ring $R=\Bbbk[v,w,x,y,z]$ over a field $\Bbbk$. 
Due to degree constraints, all homogeneous basis elements 
for the free modules 
in a minimal $\mathbb Z^5$-graded free resolution 
$\mathcal F_\bullet$ of $M$ 
are uniquely determined up to associates, 
except the basis elements corresponding to 
the direct summand $R(-\mbf{1})^2$ that appears in 
homological dimension two of the resolution (in general, we write  
$\mbf{1}=(1^m)=(1,\dots,1,\dots,1)\in\mathbb{Z}^m$).  
Under one choice of basis, we have a basis element \ $e$ \ 
(corresponding to the rightmost column of the 
matrix $\partial_2$ displayed below)
which is \emph{not} with minimal boundary support:  
\begin{equation*}
   \begin{CD} 
    {R^4} 
	@< {
            \begin{bsmallmatrix*}[r]
             -wx & 0 &  -wz & -xz & 0 \\
              0 &  -vx & vy &  0 &  -xy \\
              y  & z &  0  & 0 &  0   \\
              0 &  0 &  0 &  v  & w   \\
            \end{bsmallmatrix*}
	} <\partial_1<
    {R^{5}} 
	@< {
               \begin{bsmallmatrix*}[r]
               -z & -z  \\
               y & y\\
               x & 2x \\
               0 & -w  \\
               0 & v \\
            \end{bsmallmatrix*}
	} <\partial_2< 
    {R^{2}} \leftarrow 0. 
   \end{CD}
\end{equation*}
This is because the cycle $z$ given by the left column of the 
matrix for $\partial_2$ has support strictly contained in the support 
of $\partial_2(e)$. Illustrating the proof of 
Lemma~\ref{L:multigraded-basis-minimal-support}, to get a basis of 
minimal support we can proceed in two ways: by considering 
$\partial_2(e)-z$ or $\partial_2(e)-2z$.  
With the first way 
we arrive at a basis element (corresponding to the rightmost column) 
with minimal boundary support 
consisting of three elements: 
\begin{equation*}
   \begin{CD} 
    {R^4} 
	@< {
            \begin{bsmallmatrix*}[r]
             -wx & 0 &  -wz & -xz & 0 \\
              0 &  -vx & vy &  0 &  -xy \\
              y  & z &  0  & 0 &  0   \\
              0 &  0 &  0 &  v  & w   \\
            \end{bsmallmatrix*}
	} <\partial_1<
    {R^{5}} 
	@< {
               \begin{bsmallmatrix*}[r]
               -z & 0  \\
               y & 0 \\
               x & x \\
               0 & -w  \\
               0 & v \\
            \end{bsmallmatrix*}
	} <\partial_2< 
    {R^{2}} \leftarrow 0. 
   \end{CD}
\end{equation*}
Note that this basis choice for $\mathcal F_\bullet$ 
corresponds to the homogenization of the conic chain complex described 
in Example~\ref{E:NonCWPoset}. 
With the second way we arrive at 
a basis element (corresponding to the rightmost column) 
with minimal boundary support 
consisting of four elements: 
\begin{equation*}
   \begin{CD} 
    {R^4} 
	@< {
            \begin{bsmallmatrix*}[r]
             -wx & 0 &  -wz & -xz & 0 \\
              0 &  -vx & vy &  0 &  -xy \\
              y  & z &  0  & 0 &  0   \\
              0 &  0 &  0 &  v  & w   \\
            \end{bsmallmatrix*}
	} <\partial_1<
    {R^{5}} 
	@< {
               \begin{bsmallmatrix*}[r]
               -z & z  \\
               y & -y \\
               x & 0 \\
               0 & w  \\
               0 & -v \\
            \end{bsmallmatrix*}
	} <\partial_2< 
    {R^{2}} \leftarrow 0. 
   \end{CD}
\end{equation*}
We therefore have two substantially different choices 
of a basis, each having minimal support, for $\mathcal F_\bullet$. 
\end{example}

\begin{remark}\mlabel{R:RigidUniqueBasis}
The case where there is (up to associates) 
only one homogeneous basis (and hence no choices arise) 
is naturally of 
interest. Recall that   
a monomial ideal is \emph{rigid}, see e.g. \cite{ClMa}, if the following 
two conditions on its $\mathbb{Z}^m$-graded Betti numbers hold:
\begin{itemize}
\item[(R1)] 
$\beta_{i,\alpha}$ is either 1 or 0 for all $i$ 
and all $\alpha\in\mathbb{Z}^m$; and

\item[(R2)] 
If $\beta_{i,\alpha} = 1$ and $\beta_{i,\alpha'} = 1$ 
then $\alpha$ and $\alpha'$ are incomparable in $\mathbb Z^m$.
\end{itemize}
By \cite[Proposition~1.5]{ClMa} a minimal resolution of a monomial  
ideal has a unique up to associates $\mathbb{Z}^m$-graded basis 
if and only if the ideal is rigid. 
In particular, the unique $\mathbb{Z}^m$-graded basis in a minimal 
resolution of a rigid ideal is necessarily a basis with minimal support. 
\end{remark}

\begin{example}\mlabel{E:PPRes}
Let $S=\Bbbk[x_1,\ldots,x_6]$ with $\Bbbk$ a field of 
characteristic 2, and consider the triangulation $\nabla$ 
of the projective plane 
(see Bruns and Herzog \cite[p. 236]{BrHe})   
with Stanley-Reisner ideal 
{\small
\[
I=
( x_1x_2x_3,  x_1x_3x_5,  x_1x_4x_5, 
 x_2x_3x_4,  x_2x_4x_5, 
 x_1x_2x_6,  x_1x_4x_6, 
 x_2x_5x_6,  x_3x_4x_6,  x_3x_5x_6 ).
\]
}
\negmedspace
\negthinspace
The ideal 
$I$ has total Betti numbers $(10,15,7,1)$. 
The choice of basis $B$ for a minimal free resolution 
$\mathcal F_\bullet$ of $I$ as  
given by Macaulay2 \cite{M2} yields 
{\scriptsize
\begin{equation*}
   \begin{CD} 
   {S^{10}}
     @< {
               \begin{bsmallmatrix*}[c]
               x_4 & x_5 & 0 & 0 & 0 & x_6 & 0 & 0 & 0 & 0 & 0 & 0 & 0 & 0 & 0 \\
                x_1 & 0 & 0 & 0 & x_5 & 0 & 0 & 0 & x_6 & 0 & 0 & 0 & 0 & 0 & 0 \\
                0 & x_2 & 0 & x_4 & 0 & 0 & 0 & 0 & 0 & 0 & x_6 & 0 & 0 & 0 & 0 \\
                0 & 0 & x_2 & x_3 & 0 & 0 & 0 & 0 & 0 & 0 & 0 & 0 & x_6 & 0 & 0 \\
                0 & 0 & x_1 & 0 & x_3 & 0 & 0 & 0 & 0 & 0 & 0 & 0 & 0 & x_6 & 0 \\
                0 & 0 & 0 & 0 & 0 & x_3 & x_4 & 0 & 0 & x_5 & 0 & 0 & 0 & 0 & 0 \\
                0 & 0 & 0 & 0 & 0 & 0 & x_2 & x_3 & 0 & 0 & 0 & 0 & x_5 & 0 & 0 \\
                0 & 0 & 0 & 0 & 0 & 0 & 0 & x_1 & x_2 & 0 & 0 & 0 & 0 & 0 & x_5 \\
                0 & 0 & 0 & 0 & 0 & 0 & 0 & 0 & 0 & x_1 & 0 & x_3 & 0 & x_4 & 0 \\
                0 & 0 & 0 & 0 & 0 & 0 & 0 & 0 & 0 & 0 & x_1 & x_2 & 0 & 0 & x_4 
            \end{bsmallmatrix*}
  } <d_1< 
    {S^{15}} 
	@< {
               \begin{bsmallmatrix*}[c]
                x_5 & x_6 & 0 & 0 & 0 & 0 & 0  \\
                 x_4 & 0 & x_6 & 0 & 0 & 0 & x_4x_6 \\
                 x_3 & 0 & 0 & x_6 & 0 & 0 & 0  \\
                 x_2 & 0 & 0 & 0 & x_6 & 0 & x_2x_6 \\
                 x_1 & 0 & 0 & 0 & 0 & x_6 & 0  \\
                 0 & x_4 & x_5 & 0 & 0 & 0 & x_4x_5 \\
                 0 & x_3 & 0 & x_5 & 0 & 0 & x_3x_5 \\
                 0 & x_2 & 0 & 0 & x_5 & 0 & 0  \\
                 0 & x_1 & 0 & 0 & 0 & x_5 & 0  \\
                 0 & 0 & x_3 & x_4 & 0 & 0 & 0  \\
                 0 & 0 & x_2 & 0 & x_4 & 0 & 0  \\
                 0 & 0 & x_1 & 0 & 0 & x_4 & 0  \\
                 0 & 0 & 0 & x_2 & x_3 & 0 & x_2x_3 \\
                 0 & 0 & 0 & x_1 & 0 & x_3 & 0  \\
                 0 & 0 & 0 & 0 & x_1 & x_2 & 0  
            \end{bsmallmatrix*}
	} <d_2< 
    {S^{7}} 
	@< 
	{
	\begin{bsmallmatrix*}[c]
		x_6 \\
               	x_5  \\ 
               	x_4 \\
                 x_3 \\
                 x_2 \\
                 x_1 \\
                 0 
            \end{bsmallmatrix*}
	} <d_3< 
    {S,}
   \end{CD}
\end{equation*}
}
\negthickspace
and it is routine to check that this is indeed a 
basis of minimal support. 
\end{example}

\begin{remark}
In our experience, Macaulay2 \cite{M2} has always produced a basis of minimal 
support in its calculation of minimal free resolutions of monomial ideals. 
It would be interesting to find a monomial ideal where this was not the case. 
\end{remark}

\section{Incidence posets}\mlabel{S:Incidence}

Let the ring $R$, the nonnegative chain complex $\mathcal F_\bullet$, 
the map $f_0\: F_0\lra M$, and the basis $B$ be 
as in the 
beginning of the previous section; in particular we do not 
assume that $\mathcal F_\bullet$ is acyclic, and we 
do not assume that $B$ has minimal boundary support. 
For each $b\in B_n$ with $n\ge 1$ we can write 
\[
f_n(b)=\sum_{c\in B_{n-1}}a_{b,c}\, c,  
\]  
and we call the uniquely determined coefficient 
$a_{b,c}\in R$ the 
\emph{incidence coefficient} of $b$ and $c$ (with respect 
to the chosen bases $B_n$ and $B_{n-1}$).

\begin{definition}\mlabel{D:incidence-poset}
We introduce a poset structure  
$P(\mathcal F_\bullet, B)$ on the set $B$ 
by taking the partial ordering 
generated by the relations $b>c \iff a_{b,c}\ne 0$. We call 
this poset the \emph{incidence poset} of $\mathcal F_\bullet$ 
(with respect to the chosen basis $B$).   
\end{definition}

\begin{remarks}\mlabel{R:ranked-poset} 
Let  $P=P(\mathcal F_\bullet, B)$. 

(a) 
$\mathcal F_\bullet$ is naturally $P$-graded with 
$F_{n,a}= Ra$ for $a\in B_n$, 
and $F_{n,a}=0$ otherwise. 

(b) 
For each $a\in B_n$ with $n\ge 1$ we have \ 
$\supp_B f_n(a)=\{b\in B_{n-1}\mid b<a\}$. 

(c) 
Suppose $f_n(b)\ne 0$ for each $n\ge 1$ and each $b\in B_n$. 
It is a straightforward consequence of the definition that 
for every $n\ge 0$ and every $b\in B_n$ we have 
\[
d(b)=\dim \Delta(P_{\le b}) = n,
\] 
and  when $a\le b$ every maximal strictly increasing  
chain in $P$ that starts with $a$ and ends with $b$ has length 
$d(b) - d(a)$. In particular, the set $B_0$ is the set of minimal elements 
of $P$.  
\end{remarks}

\begin{example}\label{E:CWincidence}
Let $X$ be a CW-complex, and let $C_\bullet(X,\Bbbk)$ be its 
cellular chain complex with coefficients in $\Bbbk$. Let $A_i$ be 
the basis of $C_i(X,\Bbbk)=\HH_i(X^i,X^{i-1},\Bbbk)$ given by the 
homology classes of the characteristic maps of the $i$-cells of $X$, 
see Remark~\ref{R:conic-not-cellular}. 
Let $A=\coprod A_i$. Then the incidence poset 
of $C_\bullet(X,\Bbbk)$ with respect to $A$ is exactly the face 
poset $P=P(X)$, and the $P$-grading on $C_\bullet(X,\Bbbk)$ from 
Remark~\ref{R:ranked-poset}(a) coincides with the $P$-grading on 
$C_\bullet(X,\Bbbk)$ from Example~\ref{E:P-grading-examples}(c).  
\end{example}

\begin{example}\mlabel{E:PPincidence} 
The incidence poset $P(\mathcal F_\bullet,B)$ for the homogeneous basis $B$  
of a minimal resolution $\mathcal F_\bullet$ of the Stanley-Reisner ideal 
$I$ of the projective plane triangulation $\nabla$ 
presented in Example~\ref{E:PPRes} has 
the following 
Hasse diagram:  
\begin{center}
 \begin{tikzpicture}[scale=0.75, 
                     vertices/.style={draw, fill=black, circle, inner sep=1pt},
                     vertices2/.style={draw,fill=white,circle,inner sep=2pt}]
             \node [vertices] (1) at (-9/2+0,2){};
             \node [vertices] (2) at (-9/2+1,2){};
             \node [vertices] (20) at (-9/2+2,2){};
             \node [vertices] (4) at (-9/2+3,2){};
             \node [vertices] (5) at (-9/2+4,2){};
             \node [vertices] (8) at (-9/2+5,2){};
             \node [vertices] (25) at (-9/2+6,2){};
             \node [vertices] (12) at (-9/2+7,2){};
             \node [vertices] (13) at (-9/2+8,2){};
             \node [vertices] (15) at (-9/2+9,2){};
             \node [vertices] (3) at (-7+0,4){};
             \node [vertices] (21) at (-7+1,4){};
             \node [vertices] (7) at (-7+2,4){};
             \node [vertices] (6) at (-7+3,4){};
             \node [vertices] (9) at (-7+4,4){};
             \node [vertices] (10) at (-7+5,4){};
             \node [vertices] (29) at (-7+6,4){};
             \node [vertices] (26) at (-7+7,4){};
             \node [vertices] (17) at (-7+8,4){};
             \node [vertices] (22) at (-7+9,4){};
             \node [vertices] (14) at (-7+10,4){};
             \node [vertices] (23) at (-7+11,4){};
             \node [vertices] (16) at (-7+12,4){};
             \node [vertices] (18) at (-7+13,4){};
             \node [vertices] (27) at (-7+14,4){};
             \node [vertices2] (33) at (-4+9,6){};
             \node [vertices] (11) at (-4+0,6){};
             \node [vertices] (30) at (-4+1.5,6){};
             \node [vertices] (24) at (-4+3,6){};
             \node [vertices] (19) at (-4+4.5,6){};
             \node [vertices] (28) at (-4+6,6){};
             \node [vertices] (31) at (-4+7.5,6){};
             \node [vertices2] (32) at (-0+0,8){};
     \foreach \to/\from in {1/16, 1/7, 1/3, 2/9, 2/17, 2/3, 3/11, 3/19, 4/21, 4/6, 4/7, 5/10, 5/6, 5/23, 6/24, 6/11, 7/28, 7/11, 8/29, 8/9, 8/10, 9/30, 9/11, 10/11, 10/31, 11/32, 12/17, 12/14, 12/22, 13/14, 13/18, 13/23, 14/24, 14/19, 15/16, 15/18, 15/27, 16/28, 16/19, 17/30, 17/19, 18/19, 18/31, 19/32, 20/21, 20/26, 20/22, 21/24, 21/28, 22/24, 22/30, 23/24, 23/31, 24/32, 25/29, 25/26, 25/27, 26/28, 26/30, 27/28, 27/31, 28/32, 29/30, 29/31, 30/32, 31/32, 26/33, 22/33, 14/33, 18/33, 27/33}
     \draw [-] (\to)--(\from);
     \end{tikzpicture}
\end{center}   
Note that the open lower set generated by the maximal 
element of dimension $3$ is precisely the face poset of 
the simplicial complex $\nabla$, thus the order complex of 
this lower set is isomorphic to the first barycentric subdivision of 
$\nabla$. 
\end{example}

\begin{example}\mlabel{E:TwoIncidence}
In general, different choices of bases with minimal support 
for the same chain complex will result in non-isomorphic 
incidence posets. For instance, 
the two choices for a basis with minimal support 
from Example~\ref{E:TwoResolutions} 
give rise to two non-isomorphic incidence posets 
whose Hasse diagrams are displayed below.  
\begin{center}
\begin{tikzpicture}[scale=0.9]
\begin{scope}[scale=1, shift={(-3.7,0)}, vertices/.style={draw, fill=black, circle, inner sep=1pt}]
             \node [vertices, label=right:{}] (1) at (-2.25+0,1.33333){};
             \node [vertices, label=right:{}] (2) at (-2.25+1.5,1.33333){};
             \node [vertices, label=right:{}] (3) at (-2.25+3,1.33333){};
             \node [vertices, label=right:{}] (4) at (-2.25+4.5,1.33333){};
             \node [vertices, label=right:{}] (12) at (-3+0,2.66667){};
             \node [vertices, label=right:{}] (13) at (-3+1.5,2.66667){};
             \node [vertices, label=right:{}] (23) at (-3+3,2.66667){};
             \node [vertices, label=right:{}] (24) at (-3+4.5,2.66667){};
             \node [vertices, label=right:{}] (34) at (-3+6,2.66667){};
             \node [vertices, label=right:{}] (123) at (-1.5,4){};
             \node [vertices, label=right:{}] (234) at (1.5,4){};
     \foreach \to/\from in {1/12, 1/13, 2/12, 2/23, 2/24, 3/13, 3/23, 3/34, 4/24, 4/34, 12/123, 13/123, 23/123, 23/234, 24/234, 34/234}
     \draw [-] (\to)--(\from);
\end{scope}

\begin{scope}[scale=1, shift={(3.7,0)}, vertices/.style={draw, fill=black, circle, inner sep=1pt}]
             \node [vertices, label=right:{}] (1) at (-2.25+0,1.33333){};
             \node [vertices, label=right:{}] (2) at (-2.25+1.5,1.33333){};
             \node [vertices, label=right:{}] (3) at (-2.25+3,1.33333){};
             \node [vertices, label=right:{}] (4) at (-2.25+4.5,1.33333){};
             \node [vertices, label=right:{}] (12) at (-3+0,2.66667){};
             \node [vertices, label=right:{}] (13) at (-3+1.5,2.66667){};
             \node [vertices, label=right:{}] (23) at (-3+3,2.66667){};
             \node [vertices, label=right:{}] (24) at (-3+4.5,2.66667){};
             \node [vertices, label=right:{}] (34) at (-3+6,2.66667){};
             \node [vertices, label=right:{}] (123) at (-1.5,4){};
             \node [vertices, label=right:{}] (1234) at (1.5,4){};
     \foreach \to/\from in {1/12, 1/13, 2/12, 2/23, 2/24, 3/13, 3/23, 3/34, 4/24, 4/34, 12/123, 13/123, 23/123, 24/1234, 34/1234,12/1234, 13/1234}
     \draw [-] (\to)--(\from);
\end{scope}

\end{tikzpicture}
\end{center}
\end{example}

The following theorem provides a key technical tool needed 
for the proofs of our main results. 

\begin{theorem}\mlabel{T:conic-iso} 
Let $\Bbbk$ be a field, 
and let $B$ be a $\Bbbk$-basis 
for a  nonnegative chain complex 
$\mathcal F_\bullet$ of \/ $\Bbbk$-modules 
with $\dim_\Bbbk\HH_0\mathcal F_\bullet = 1$. 
Let $f_0\:F_0\lra \HH_0\mathcal F_\bullet$ be the canonical 
projection. 
Let $P=P(\mathcal F_\bullet, B)$ and suppose 
for each $n\ge 0$ that $f_n(b)\ne 0$ for all \/    
$b\in B_n$. 
The following are equivalent: 
\begin{enumerate}
\item 
$B$ is a basis with minimal support; 

\item 
For each $n\ge 0$ and \ $a\in B_n$ we have 
$\dim_\Bbbk\HH_n(\Delta(P_{\le a}), \Delta(P_{<a}), \Bbbk)=1$,   
and there is an isomorphism of $P$-graded chain complexes 
\[
\phi_\bullet\: 
\mathcal F_\bullet \lra 
\mathcal C_\bullet(P, \Bbbk),   
\]  
i.e. such that 
$\phi_n(a)\in \HH_n(\Delta(P_{\le a}), \Delta(P_{<a}), \Bbbk)$ 
for all $n\ge 0$ and $a\in B_n$. 
\end{enumerate} 
\end{theorem}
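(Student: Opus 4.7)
The plan is to proceed by induction on the homological degree $n$, building the isomorphism $\phi_\bullet$ degree by degree via the cone construction $\phi_n(a):=[a,\phi_{n-1}(f_n(a))]$ in the direction $(1)\Rightarrow(2)$, and transferring the one-dimensionality of $\HH_n(\Delta_{\le a},\Delta_{<a},\Bbbk)$ across $\phi_\bullet$ to derive minimal support in the opposite direction.

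For $(2)\Rightarrow(1)$: fix $a\in B_n$ with $n\ge 1$ and assume the iso $\phi_\bullet$ exists. A hypothetical nonzero $y\in\Ker f_{n-1}$ with $\supp y\subsetneq\supp f_n(a)$ would push forward, via the $P$-graded iso $\phi_{n-1}$, to a conic cycle in $\mathcal C_{n-1}(P_{<a},\Bbbk)$; by \eqref{E:conic-kernel} applied to $P_{<a}$ this cycle lies in $\RH_{n-1}(\Delta_{<a},\Bbbk)\cong\HH_n(\Delta_{\le a},\Delta_{<a},\Bbbk)$, which is one-dimensional by hypothesis and spanned by $\partial_n\phi_n(a)=\phi_{n-1}(f_n(a))$; pulling back gives $y=\mu f_n(a)$, contradicting the strict support inclusion.

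For $(1)\Rightarrow(2)$ I would induct on $n$, with the base $n=0$ immediate from $\Delta_{\le a}=\{a\}$, $\Delta_{<a}=\emptyset$, and $\phi_0(a):=[a]$. For the inductive step, I assume $\phi_k$ is already built for $k\le n-1$ as a $P$-graded chain isomorphism with one-dimensional components; then for $a\in B_n$ the element $\phi_{n-1}(f_n(a))$ is a conic cycle supported in $B_{n-1}\cap P_{<a}$, and because $\Delta_{<a}$ has dimension at most $n-1$ it is also an element of $\RH_{n-1}(\Delta_{<a},\Bbbk)$. Setting $\phi_n(a):=[a,\phi_{n-1}(f_n(a))]\in\HH_n(\Delta_{\le a},\Delta_{<a},\Bbbk)$ then defines a chain map by the identity $\partial[a,z]=z$ for a cycle $z$.

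The main obstacle is to show that $\RH_{n-1}(\Delta_{<a},\Bbbk)$ is one-dimensional, spanned by $\phi_{n-1}(f_n(a))$. For a general $z$ in this module, the ranked structure of $P$ from Remark~\ref{R:ranked-poset}(b) forces every $(n-1)$-simplex of $\Delta_{<a}$ to be a maximal chain $b_0>\dots>b_{n-1}$ with $d(b_0)=n-1$, so grouping by top vertex writes $z=\sum_c[c,w_c]$ with $c\in B_{n-1}\cap P_{<a}$; separating the identity $\partial z=\sum_c w_c-\sum_c[c,\partial w_c]=0$ by top-vertex dimension forces each $\partial w_c=0$, placing $z$ inside $\mathcal C_{n-1}(P_{<a},\Bbbk)$. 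Pulling $z$ back through $\phi_{n-1}^{-1}$ to some $y\in\Ker f_{n-1}$ with $\supp y\subseteq\supp f_n(a)$, the minimal-support hypothesis together with a coefficient-cancellation argument in the style of Lemma~\ref{L:MinSuppExists} forces $y$ to be a scalar multiple of $f_n(a)$, completing the induction.
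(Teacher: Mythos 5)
Your overall approach is the same as the paper's: build $\phi_\bullet$ inductively by the cone formula $\phi_n(a)=[a,\phi_{n-1}(f_n(a))]$, and in both directions exploit the identification of $\RH_{n-1}(\Delta(P_{<a}),\Bbbk)$ with the kernel of the restricted conic differential so that the minimal-support condition on $B$ becomes equivalent to one-dimensionality of each $\HH_n(\Delta_{\le a},\Delta_{<a},\Bbbk)$. Your $(2)\Rightarrow(1)$ argument and your explicit top-vertex decomposition of $(n-1)$-cycles are fine and match the machinery the paper sets up before the proof.

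There is, however, a genuine gap in your $(1)\Rightarrow(2)$ base/first step. You set $\phi_0(a):=[a]$ unconditionally, and then for $a\in B_1$ you need $\phi_0(f_1(a))$ to be a \emph{reduced} $0$-cycle of $\Delta_{<a}$ (coefficient sum zero): this is what makes $[a,\phi_0(f_1(a))]$ a legitimate class in $\HH_1(\Delta_{\le a},\Delta_{<a},\Bbbk)$, what makes your appeal to \eqref{E:conic-kernel} at $n=1$ valid, and what makes $\phi_0$ carry $\Ker f_0$ into $\RH_0(\Delta^0,\Bbbk)$ so the pullback $y=\phi_0^{-1}(z)$ actually lands in $\Ker f_0$. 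But $f_1(a)\in\Ker f_0$ only gives $\sum_b [a:b]\,f_0(b)=0$, which does not force $\sum_b[a:b]=0$ unless the values $f_0(b)$ are all equal. For instance, with $F_0=\Bbbk b_1\oplus\Bbbk b_2$, $f_0(b_1)=1$, $f_0(b_2)=2$, $F_1=\Bbbk a$, $f_1(a)=2b_1-b_2$, condition (1) holds but $\phi_0(f_1(a))=2[b_1]-[b_2]$ has coefficient sum $1\neq 0$, so your $\phi_1$ is not even well defined. The paper handles exactly this: it first disposes of the two trivial shapes of $\mathcal F_\bullet$, splits off the summands $\{b,b'\}$ arising from $f_0(b)=0$ via Remark~\ref{R:ranked-poset}, and then rescales using Remark~\ref{R:change-basis} so that $f_0(b)$ is the same nonzero element of $M$ for every $b\in B_0$; only after that normalization is $\phi_0(a)=[a]$ the right choice. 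An equivalent fix is to fix a basis $m$ of $M$, write $f_0(b)=\mu_b m$, and set $\phi_0(b)=\mu_b[b]$, again after splitting off the $\mu_b=0$ summands. Your sketch needs one of these normalizations to close the inductive loop at $n=1$.
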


\begin{proof} 
Note that our assumptions   
imply, by Remark~\ref{R:ranked-poset}(c), 
that the set of minimal elements of $P$ is precisely 
$B_0$, and, more generally, that  
$B_n=\{a\in B \mid d(a)=n\}$. 
Thus we have  
\[
\mathcal C_n(P, \Bbbk) = 
\HH_n(\Delta^n,\Delta^{n-1}, \Bbbk) = 
\bigoplus_{a\in B_n}
\HH_n\bigl(\Delta(P_{\le a}),\Delta(P_{<a}), \Bbbk\bigr). 
\] 
Furthermore, for each $a\in B_n$ 
we have by Remark~\ref{R:augmented-conic}(c) a canonical inclusion 
$
\mathcal C_\bullet(P_{\le a},\Bbbk)\subseteq 
\mathcal C_\bullet(P,\Bbbk), 
$
hence Proposition~\ref{P:conic-form} shows that  
the differential $\partial_n$ of 
the conic chain complex $\mathcal C_\bullet(P, \Bbbk)$ maps  
$\HH_n\bigl(\Delta(P_{\le a}), \Delta(P_{<a}), \Bbbk\bigr)$ 
isomorphically onto  
\[
K^a_{n-1}:=\RH_{n-1}(\Delta(P_{< a}), \Bbbk)
\]
when $n\ge 1$;  and 
by \eqref{E:conic-kernel} and Remarks~\ref{R:augmented-conic}(a,c) this 
is exactly the kernel of the restriction of $\partial_{n-1}$ to 
\[
C^a_{n-1}:= 
\bigoplus_{
a>b\in B_{n-1} 
}
\HH_{n-1}\bigl(\Delta(P_{\le b}),\Delta(P_{<b}), \Bbbk\bigr) 
\quad = \quad \mathcal C_{n-1}(P_{\le a}, \Bbbk).
\]
We also set 
\[
F^a_{n-1} := 
\bigoplus_{a> b\in B_{n-1}}
\Bbbk\thinspace b
\]
and write $Z^a_{n-1}$ for the kernel of the restriction of 
$f_{n-1}$ to $F^a_{n-1}$. 

We now show that $(2)$ implies $(1)$. Indeed, suppose 
$B$ is not with minimal support. Then for some $n\ge 1$ 
and some $a\in B_n$ there is a non-zero 
cycle $c\in F_{n-1}$ with $\supp c \subsetneq\supp f_n(a)$. 
Therefore by Remarks~\ref{R:ranked-poset}(b) and 
\ref{R:change-basis}(b) the space $Z^a_{n-1}$ 
is at least two-dimensional. Since 
$\phi_\bullet$ maps $F^a_{n-1}$ isomorphically onto $C^a_{n-1}$, this 
yields that $Z^a_{n-1}$ is isomorphic to $K^a_{n-1}$ and hence the latter 
has to be also at least two-dimensional, a contradiction. 

Next we proceed with the proof that $(1)$ implies $(2)$. 
Since $\dim_\Bbbk\HH_0\mathcal F_\bullet=1$, we will fix 
a generator $m\ne 0$ of $\HH_0\mathcal F_\bullet$ over $\Bbbk$, and 
by Remark~\ref{R:change-basis}(a) we may assume 
without loss of generality that $f_0(b)=m$ 
for each $b\in B_0$.  We will also write 
$
\phi_{-1}\: \HH_0\mathcal F_\bullet \lra 
\mathcal C_{-1}(P,\Bbbk)=\Bbbk\cdot[\ ] 
$
for the isomorphism that sends $m$ to $[\ ]$. 
By using induction on $n\ge 0$
we will simultaneously construct the desired isomorphisms $\phi_n$ and 
prove that 
\begin{equation}\elabel{E:top-dimension-one}
\dim_\Bbbk\HH_n(\Delta(P_{\le a}), \Delta(P_{<a}), \Bbbk)=1
\end{equation}
for all $a\in B_n$. 
Note that for each $a\in B_0$ we have 
$\HH_0\bigl(\Delta_{\le a},\Delta_{<a},\Bbbk)=\Bbbk\cdot [a]$ 
by Proposition~\ref{P:conic-elements}, thus 
\eqref{E:top-dimension-one} holds 
when $n=0$ and  
$
\HH_0(\Delta^0,\Delta^{-1}, \Bbbk)= 
\bigoplus_{a\in B_0} \Bbbk\cdot [a].
$
Since 
$F_0=\bigoplus_{a\in B_0} \Bbbk\thinspace a$, 
we define $\phi_0$ 
by $\phi_0(a)=[a]$ and observe that 
$\partial_0\phi_0= \phi_{-1} f_0$.  
This establishes the base of our induction. 

Now assume $n\ge 1$ and that for all 
$0\le k\le n-1$ we have already 
established \eqref{E:top-dimension-one} 
and have constructed  
isomorphisms $\phi_k$ 
as required in (2), and such that 
$\partial_k\phi_k= \phi_{k-1} f_k$.
In particular, $F^a_{n-1}$ gets mapped by $\phi_{n-1}$ 
isomorphically onto $C^a_{n-1}$, and therefore 
$Z^a_{n-1}$ gets mapped by $\phi_{n-1}$ isomorphically 
onto $K^a_{n-1}$.  
The 
minimal support condition on $B$ 
implies, by Remarks~\ref{R:change-basis}(b) 
and~\ref{R:ranked-poset}(b),   
that $Z^a_{n-1}$ has dimension $1$ and is spanned by $f(a)$. 
Therefore $K^a_{n-1}$ has dimension $1$, 
and is spanned by 
$\phi_{n-1}(f_n(a))$.  This completes the proof of 
\eqref{E:top-dimension-one}, and we can now define 
$\phi_n$ by sending each $a\in B_n$ to the 
unique nonzero element in 
$\HH_n\bigl(\Delta_{\le a}, \Delta_{<a}, \Bbbk\bigr)$ 
that gets mapped under $\partial_n$ to $\phi_{n-1}(f_n(a))$. 
The remaining 
desired properties of $\phi_n$ are now immediate.   
\end{proof}

\begin{example}\label{E:regular-CW-basis-minimal-support}
Let $\Bbbk$ be a field, $X$ be a regular CW-complex, and $A$ be the basis of 
$C_\bullet(X,\Bbbk)$ given by the homology classes of the 
characteristic maps of the cells of $X$, 
see Example~\ref{E:CWincidence}. Then $A$ is a basis with minimal 
boundary support. Indeed, since the closure of each cell 
involves only finitely many other cells, we may without 
loss of generality assume  that $X$ is finite and connected. 
Then $C_\bullet(X,\Bbbk)$ is isomorphic as a $P$-graded chain 
complex to $\mathcal C_\bullet(P,\Bbbk)$, see 
Example~\ref{E:cellular-is-conic}, where $P$ is the face 
poset of $X$. Since 
$P$ is an hcw-poset by \cite{Bjo}, the desired conclusion 
is immediate from Theorem~\ref{T:conic-iso}.   
\end{example}

\begin{example}\label{E:nearly-scarf}
Let $\Omega$ be a finite simplicial complex, and let $\Bbbk$ 
be a field. 

(a) 
By Example~\ref{E:regular-CW-basis-minimal-support}, the 
standard basis $A=\coprod A_j$ of $C_\bullet(\Omega,\Bbbk)$ 
is a basis of minimal boundary support. For each $j\ge 0$  
let $r_j=\dim_\Bbbk\RH_j(\Omega, \Bbbk)$, and let $z_{j,1},\dots,z_{j,r_j}$ be 
cycles of minimal support that induce a basis of $\RH_j(\Omega,\Bbbk)$; 
these exist by Remark~\ref{R:change-basis}(e). Let 
$A'_0=A_0$ and $C'_0=C_0(\Omega,\Bbbk)$; and for $j\ge 0$ set 
$A'_{j+1}=A_{j+1}\sqcup\{e_{j+1,1},\dots,e_{j+1,r_j}\}$ and 
\[
C'_{j+1}\ = \ 
C_{j+1}(\Omega,\Bbbk)\ \oplus \ 
\Bbbk\cdot e_{j+1,1}\ \oplus\ \dots\ \oplus\ \Bbbk\cdot e_{j+1,r_j}. 
\] 
Define $d'_{j+1}\: C'_{j+1}\lra C'_j$ by $d'_{j+1}(e_{j+1,k})=z_{j,k}$ 
and $d'_{j+1}(a)=d_{j+1}(a)$ for $a\in A_{j+1}$, where $d_{j+1}$ 
is the differential of $C_\bullet(\Omega,\Bbbk)$. Thus we obtain 
a nonnegative chain complex $C'_\bullet=(C_j',d_j')$ such that 
$\dim_\Bbbk\HH_0C'_\bullet=1$, 
together with a basis of minimal support $A'=\coprod A'_j$. 
Let $P=P(C'_\bullet, A')$ be the corresponding incidence poset. 
Then $C'_\bullet$ is isomorphic as a $P$-graded chain complex 
to $\mathcal C_\bullet(P,\Bbbk)$ by Theorem~\ref{T:conic-iso}. 
Furthermore $P$ is obtained from the face poset of $\Omega$ by adding  
$r_j$ new elements of dimension $j+1$, for $j= 0,\dots,\dim\Omega$, 
representing cycles of minimal support that induce a basis 
of $\RH_j(\Omega,\Bbbk)$, with 
new covering 
relations determined by their supports.  

(b) 
Let $I_\Omega$ be the Nearly-Scarf ideal \cite{PeVe} associated 
with $\Omega$. As shown in \cite{PeVe} there is a labeling 
$\deg$ on $P$ such that the homogenization of $C'_\bullet$ is a minimal 
free resolution of $I_\Omega$. It follows that a minimal resolution of  
$I_\Omega$ is supported on the labeled poset $(P,\deg)$. Furthermore, 
$P$ is obtained from the lcm-lattice of $I_\Omega$ by removing 
the minimal and the maximal elements, and adding elements corresponding 
to cycles of  minimal support that induce a basis for 
$\RH_\bullet(\Omega,\Bbbk)$, with new covering relations determined by 
the supports of these cycles.  
\end{example}

We are now ready to address 
the first main result of this paper, namely  
that every minimal free resolution of a monomial ideal 
is supported on a poset. We will state and prove it 
in the more general setting of monomial 
modules. Let $\Bbbk$ be a field, let $R=\Bbbk[x_1,\dots, x_m]$ 
be a polynomial ring over $\Bbbk$, and consider the Laurent 
polynomials ring $T=\Bbbk[x_1,\dots, x_m,x_1^{-1},\dots,x_m^{-1}]$ 
with the standard $\mathbb Z^m$-grading.  
Recall from \cite{BaSt} that a \emph{monomial module} is a 
$\mathbb Z^m$-graded $R$-submodule of $T$.

\begin{theorem}\mlabel{T:MFR-from-poset}
Let\/ $\Bbbk$ be a field, let $R=\Bbbk[x_1,\dots, x_m]$ 
be a polynomial ring over $\Bbbk$, let\/ $M$ be a 
monomial module in $T$, and let\/ $\mathcal F_\bullet$ be 
a minimal\/ $\mathbb Z^m$-graded free resolution of\/ $M$ over $R$. 
Suppose that\/ $B$ is a homogeneous basis of\/ $\mathcal F_\bullet$ 
with minimal support, and let \/ 
$\deg\: P(\mathcal F_\bullet, B) \lra \mathbb Z^m$ be the map that 
assigns to each element of \/ $B$ its\/ $\mathbb Z^m$-degree as an 
element of \/ $\mathcal F_\bullet$.
 
Then\/ $\deg$  is a morphism of posets, and the conic chain complex 
$\mathcal C_\bullet\bigl(P(\mathcal F_\bullet, B),\Bbbk\bigr)$ 
produces a minimal free resolution of\/ 
$M$ after homogenization. In particular, 
every monomial ideal has a minimal free resolution 
supported on a poset. 
\end{theorem}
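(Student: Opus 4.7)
The plan is to prove the two claims separately. First, that $\deg$ is a morphism of posets follows immediately from the fact that, in a $\mathbb{Z}^m$-graded resolution, whenever $c < b$ in the incidence poset the incidence coefficient $[b:c]$ is a nonzero $\mathbb{Z}^m$-homogeneous element of $R$, hence a nonzero monomial, of degree $\deg b - \deg c \in \mathbb{Z}^m_{\ge 0}$. In fact minimality of $\mathcal{F}_\bullet$ puts $[b:c] \in \mathfrak{m}$, so the strict order-preservation $\deg c \lneq \deg b$ holds; I will invoke this at the end to obtain minimality of the homogenized resolution.

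For the second claim, I intend to invoke the Proposition at the end of Section~\ref{S:Homogenization}, which reduces the problem to showing that $\widetilde{\mathcal{C}}_\bullet(P_{\deg \le \alpha}, \Bbbk)$ is exact for every $\alpha \in \mathbb{Z}^m$ with $P_{\deg \le \alpha}$ nonempty (where $P = P(I,B)$). Identification of the ideal being resolved with $I$ itself is a Hilbert function check, using Remark~\ref{R:ranked-poset}(b) to note that $P_{\deg \le \alpha}$ is nonempty exactly when some minimal element $b \in B_0$ satisfies $\deg b \le \alpha$, i.e.\ exactly when $x^\alpha \in I$.

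I will obtain exactness of $\widetilde{\mathcal{C}}_\bullet(P_{\deg \le \alpha}, \Bbbk)$ by applying Theorem~\ref{T:conic-iso} to the $\mathbb{Z}^m$-graded strand $\mathcal{F}_{\bullet,\alpha}$. After rescaling each $b \in B_n$ with $\deg b \le \alpha$ by $x^{\alpha - \deg b}$, this strand becomes a finite free $\Bbbk$-resolution of $I_\alpha \cong \Bbbk$, whose basis $B_{\bullet,\alpha}$ has incidence poset precisely $P_{\deg \le \alpha}$; non-comparability of boundary supports (via Lemma~\ref{L:MinSuppExists}) and non-vanishing of each differential pass automatically to the strand because any $c \in \supp f_n(b)$ with $b \in B_{n,\alpha}$ already satisfies $\deg c \le \deg b \le \alpha$, and hence lies in $B_{n-1,\alpha}$. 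Theorem~\ref{T:conic-iso} then produces an isomorphism of $P_{\deg \le \alpha}$-graded complexes under which the augmentation $\mathcal{F}_{0,\alpha} \to I_\alpha$ corresponds to $\partial_0 \colon \mathcal{C}_0 \to \mathcal{C}_{-1}$, and exactness of the left side forces exactness of $\widetilde{\mathcal{C}}_\bullet(P_{\deg \le \alpha}, \Bbbk)$.

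The main obstacle is verifying that the minimal-support property of $B$ restricts correctly to the strand basis $B_{\bullet,\alpha}$, which is the one hypothesis of Theorem~\ref{T:conic-iso} that does not follow from purely formal considerations. The key move is to lift any $\Bbbk$-cycle $z' = \sum_c \mu_c (x^{\alpha - \deg c} c)$ in the strand to a genuine homogeneous cycle $z = \sum_c \mu_c x^{\alpha - \deg c} c \in F_{n-1}$ of $\mathbb{Z}^m$-degree $\alpha$, noting that $\supp_{B_{n-1}} z = \supp_{B_{n-1,\alpha}} z'$; a strictly smaller support in the strand would then give a strictly smaller support against $B_{n-1}$ in $F_{n-1}$, contradicting the minimal-support property of $B$. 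Once this is established, the remainder of the argument is essentially formal.
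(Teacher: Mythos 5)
Your proposal is correct, but it takes a genuinely different route from the paper's. The paper's proof is essentially your argument ``collapsed into a single stroke'': instead of working $\mathbb Z^m$--degree strand by strand, it passes to $\ol{\mathcal F}_\bullet = \mathcal F_\bullet\otimes_R R/(x_1-1,\dots,x_m-1)$, a single length--$l$ finite free complex over $\Bbbk$ whose zeroth homology is one--dimensional because $I$ is a nonzero monomial (hence torsion-free) ideal. It then observes $P(I,B)=P(\ol{\mathcal F}_\bullet,\ol B)$, verifies the hypotheses of Theorem~\ref{T:conic-iso} once (using part (c) of the first lemma of Section~\ref{S:MinSupp} to pass minimal support to $\ol B$, and Lemma~\ref{L:MinSuppExists} for non-comparable boundary supports), and obtains an isomorphism $\ol\phi_\bullet\colon\ol{\mathcal F}_\bullet\to\mathcal C_\bullet(P,\Bbbk)$ of $P$-graded chain complexes. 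This isomorphism lifts to an isomorphism of homogenizations, and since the homogenization of $\ol{\mathcal F}_\bullet$ is $\mathcal F_\bullet$ itself, there is no need for a Hilbert function check to identify the resolved ideal, nor for a separate verification of minimality --- the homogenized conic complex is identified directly with $\mathcal F_\bullet$. Your strand-by-strand version costs you that extra bookkeeping (checking the hypotheses of Theorem~\ref{T:conic-iso} on each strand, verifying that the incidence poset of the strand really is the induced suborder $P_{\deg\le\alpha}$ --- true, since $\deg$ is strictly order-preserving so intermediate elements in any chain below $b$ also have degree $\le\alpha$ --- then identifying the ideal and confirming minimality), but all of your individual steps are sound, and your key observation that a strand cycle of smaller support lifts to a genuine homogeneous cycle of $\mathbb Z^m$-degree $\alpha$ is exactly the content, in disguise, of the paper's lemma about passing minimal support across the quotient by $(x_1-1,\dots,x_m-1)$.
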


\begin{proof}
That $\deg$ is a morphism of posets is immediate from 
the fact that the differential of $\mathcal F_\bullet$ preserves 
the $\mathbb Z^m$-grading.  

Let $P=P(\mathcal F_\bullet, B)$, let 
$
\ol{\mathcal F}_\bullet=
\mathcal F_\bullet/(x_1-1,\dots, x_m-1)\mathcal F_\bullet,
$ 
and let $\ol B$ be the induced by $B$ basis of 
$\ol{\mathcal F}_\bullet$. 
It is straightforward to observe that we have 
$P(\mathcal F_\bullet, B)= P(\ol{\mathcal F}_\bullet, \ol B)$. 
Also, by Lemma~\ref{L:dehomogenization} we have that 
$\overline{B}$ is a basis of minimal support, and that 
$\bar f_n(\bar b)\ne 0$ for all $n\ge 0$ and all $b\in B_n$. 
Furthermore, the isomorphism \eqref{Eq:dehomogenization} shows 
that $\overline{\mathcal F}_\bullet$ is acyclic, hence a 
resolution of $\overline M=M/(x_1-1,\dots,x_m-1)M\cong\Bbbk$. 
Thus 
by Theorem~\ref{T:conic-iso} we have an isomorphism 
$
\phi_\bullet \: \ 
\ol{\mathcal F}_\bullet \lra \mathcal C_\bullet(P,\Bbbk)
$
of $P$-graded chain complexes
which therefore lifts to an isomorphism of the corresponding 
homogenized chain complexes. Since the homogenization of 
$\ol{\mathcal F}_\bullet$ is $\mathcal F_\bullet$, we obtain the 
desired conclusion.  
\end{proof}

\begin{remark} 
In view of Theorem~\ref{T:MFR-from-poset} it becomes an 
important open problem 
to construct (preferably in a canonical way) a poset $P$ that 
supports a minimal free resolution of a given monomial ideal $I$, 
without already knowing the structure of that resolution. 
One possible approach to this problem is to start with the 
lcm-lattice of the ideal $I$ and find a 
way to modify it appropriately. 
If one doesn't insist on a canonical 
construction, then one can proceed in a manner somewhat 
analogous to what we did in Example~\ref{E:nearly-scarf}, 
as follows. Let $L$ be the 
lcm-lattice with its minimal element removed. 
The order complex $\Delta(L)$ supports a non-minimal free resolution 
of $I$ (this is called the lcm-resolution of $I$ in \cite{OrWe}). Then, 
starting with $n=2$,  
one can begin adding new elements to $L$ in such a way so that   
their dimension is $<n$ and 
the order complex of the new poset still supports a resolution of $I$, 
but open lower sets generated by elements of dimensions $d\le n$ 
have zero reduced homology in dimensions $\le d-2$. As we keep increasing $n$, 
we eventually reach a poset $P$ that satisfies the 
assumptions of Proposition~\ref{P:conic-simplicial}, and whose 
order complex $\Delta(P)$ 
still supports a free resolution of $I$. Now one can use 
Proposition~\ref{P:conic-simplicial} and deduce that the poset $P$ 
supports a minimal free resolution of $I$.  
Due to dimensional restrictions, essentially the smallest 
nontrivial example of this procedure arises for the monomial ideal 
$I$ from Example~\ref{E:PPRes}. In that case the Hasse diagram 
of its lcm-lattice (without the minimal element) is obtained by removing 
from the incidence poset displayed in Example~\ref{E:PPincidence}
the maximal element of dimension $2$, and (after making appropriate choices) 
the new poset that gets produced (after one iteration only)  
is the poset displayed in Example~\ref{E:MakePPHCW}.    
Unfortunately, as 
this extremely technical 
iterative process involves making numerous non-canonical 
choices at each stage, it is very difficult to control, and 
in general the properties of the resulting poset $P$ are 
still shrouded in mystery.    
\end{remark}

\section{Homology CW-posets}\mlabel{S:hcw}

One of the main goals in this paper is to show that our notion of resolution 
supported on a poset allows to extend to every minimal resolution 
many useful techniques originally developed to study cellular 
resolutions. For that we need labeled posets whose elements are in a 
degree-preserving bijective correspondence with a homogeneous basis 
of a minimal resolution, \emph{and} 
whose topological properties reflect well the properties of the resolution. 
From such a point of view the lcm-lattice of a monomial ideal $I$ is, 
in general, not an appropriate poset to use, even though 
its topology determines completely the structure of the minimal resolutions  
of the ideal. This is because the lcm-lattice may contain elements 
that either do not contribute, or contribute more than once 
to the corresponding graded Betti number of the ideal.  
Similarly, the incidence poset $P=P(\mathcal F_\bullet, B)$ for a basis 
with minimal support $B$ of a minimal free resolution $\mathcal F_\bullet$ 
of $I$ may not be appropriate to use, even though its elements are in 
a natural bijective correspondence with a homogeneous basis of 
$\mathcal F_\bullet$;  this is because the topology of $P$ may not reflect 
well enough the properties of the resolution. In the remaining three 
sections of this paper we will show that a good 
class of posets that satisfies all desired requirements is  the class 
of hcw-posets.    

Recall that a  poset $P$ is called a 
\emph{homology CW-poset} or \emph{hcw-poset} over $\Bbbk$  
if for each $a\in P$ the dimension of $a$ is finite and the lower set   
$P_{<a}$ is a homology sphere over $\Bbbk$.  

\begin{examples}\mlabel{E:hcw}
(a) 
As already mentioned in Examples~\ref{E:cellular-is-conic} 
and~\ref{E:regular-CW-basis-minimal-support}, by \cite{Bjo} 
every CW-poset is an hcw-poset. Thus hcw-posets  provide 
a good starting point for extending cellular resolution 
techniques.

(b) 
It is straightforward to check that the two incidence posets 
from Example~\ref{E:TwoIncidence} are hcw-posets. 

(c) 
In general one cannot expect that incidence posets 
will be hcw-posets.  For instance,  
the open lower set generated by the dimension $3$ element 
in the incidence poset from Example~\ref{E:PPincidence} 
is homeomorphic 
to the barycentric subdivision of the projective plane, hence  
has nonzero homology in two dimensions. 
This incidence poset 
is therefore not an hcw-poset. 

(d)
Routine dimensional considerations show that, 
for a monomial ideal $I$ and a homogeneous 
basis with minimal support $B$ 
of a minimal free $\mathbb Z^m$-graded resolution 
$\mathcal F_\bullet$ of $I$, the  incidence poset 
$P=P(\mathcal F_\bullet, B)$ has to be an hcw-poset 
if $\dim\Delta(P) \le 2$.   
Thus Example~\ref{E:PPincidence} describes   
essentially the smallest such incidence poset which  
is not an hcw-poset. 
\end{examples}

Next, we check that hcw-posets have sufficiently good 
combinatorial and topological properties. 
It is immediate from the definitions that 
if a labeled hcw-poset supports a free resolution then its elements 
are in a degree-preserving bijective correspondence with a homogeneous 
basis of the resolution. This takes care of the combinatorial 
part. The following proposition indicates that we also have 
reasonably good topological properties: 

\begin{proposition}\label{P:good-hcw-topology} 
Let $\Bbbk$ be a field, let $R=\Bbbk[x_1,\dots,x_m]$ be a polynomial 
ring over $\Bbbk$ with the standard $\mathbb Z^m$-grading, 
and let $M$ be a $\mathbb Z^m$-graded $R$-module. 
Let $Z=(P,\deg)$ be a labeled hcw-poset over $\Bbbk$.  

Then $Z$ supports a free resolution of $M$ if and only 
the simplicial complex $\Delta(P)$ supports a free resolution of $M$.  
\end{proposition} 

\begin{proof} 
The canonical inclusion 
$\mathcal C_\bullet(P,\Bbbk)\lra C_\bullet\bigl(\Delta(P),\Bbbk\bigr)$ of $P$-graded chain complexes lifts to a map between their homogenizations 
$\mathcal C_\bullet(Z,\Bbbk)\lra C_\bullet\bigl(\Delta(Z),\Bbbk\bigr)$ such 
that for each $\alpha\in\mathbb Z^m$ the corresponding map between 
the degree $\alpha$ strands can be canonically identified with the 
inclusion map 
$
\mathcal C_\bullet(P_{\deg\le\alpha},\Bbbk)
\lra 
C_\bullet\bigl(\Delta(P_{\deg\le\alpha}),\Bbbk\bigr).
$ 
Since lower sets of hcw-posets are again hcw-posets, $P_{\deg\le\alpha}$ is 
an hcw-poset for each $\alpha\in\mathbb Z^m$. 
The desired conclusion is now immediate  
from Proposition~\ref{P:conic-simplicial} and  
Proposition~\ref{acyclicity-criterion}. 
\end{proof} 

The next step is to show in our second main theorem  
that given a minimal free resolution $\mathcal F_\bullet$ of a 
monomial ideal we can always find an hcw-poset that supports it. 
We will formally state and prove this result 
in the next section. The main idea is to start with an incidence 
poset for a basis with minimal support of $\mathcal F_\bullet$, 
and modify it appropriately to produce the desired hcw-poset. 
The following quite technical lemma 
and theorem provide  
the key tool for accomplishing that goal. 

\begin{lemma}\mlabel{L:cavity-conic}
Let $n\ge 0$, let $\deg\: B\lra \mathbb Z^m$ be a function, 
and let $P$ be a poset structure on the set $B$ such that 
all elements of $B$ have finite dimension, and such that 
$(P,\deg)$ is a labeled poset. 
Let $a\in B$ be such that $n+2\le d_P(a)$, 
let $\alpha=\deg(a)$, 
and suppose that for all $b\in P$ with $d_P(b)<d_P(a)$ 
the lower set $P_{<b}$ is a homology sphere over $\Bbbk$. 
Also, suppose that 
$\HH_n\widetilde{\mathcal C}_\bullet(P_{\deg\le\alpha},\Bbbk)=0$. 
Then there exists a poset structure $P'$ on $B$ such that 
the following conditions are satisfied:  
\begin{enumerate}
\item 
The maps $\id_B\: P\lra P'$ and $\deg\: P'\lra \mathbb Z^m$ 
are morphisms of posets. 


\item
For each $c\in B$ such that $a\not\le_P c$ we have 
$P_{\le c} = P'_{\le c}$. 

\item
For each $c\in B$ we have 
$\dim\Delta(P_{\le c})=\dim\Delta(P'_{\le c})$. 

\item
$
\widetilde{\mathcal C}_\bullet(P,\Bbbk)=
\widetilde{\mathcal C}_\bullet(P',\Bbbk).
$  

\item
For $k\ge n+1$ we have 
$\RH_k\bigl(\Delta(P_{<a}), \Bbbk\bigr)= 
 \RH_k\bigl(\Delta(P'_{<a}), \Bbbk\bigr)$. 

\item 
$\RH_n\bigl(\Delta(P'_{<a}), \Bbbk\bigr) = 0$. 
\end{enumerate}
In fact, the poset $P'$ is obtained by 
creating extra edges extending down from $a$ in the 
Hasse diagram of $P$. 
\end{lemma}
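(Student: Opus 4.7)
The plan is to produce $P'$ by adjoining new order relations $v<_{P'} a$ for a carefully chosen finite set $V$ of elements $v\in Q:=P_{\deg\le\alpha}$ of $P$-dimension $n+1$ lying outside $P_{\le a}$, and then taking the transitive closure. Geometrically, each such $v$ contributes a contractible cone $\Delta(P_{\le v})$ that gets glued to $\Delta(P_{<a})$ inside $\Delta(P'_{<a})$, and the hypothesis will be used to guarantee that these cones precisely fill in the offending homology in degree $n$.

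My first step is to import the hypothesis into the conic chain complex of $Q$. Since $\deg$ is a morphism of posets, for every $b\in Q$ one has $Q_{<b}=P_{<b}$, so the homology-sphere assumption on open filters transfers from $P$ to $Q$ for all $b$ with $d_P(b)<d_P(a)$. Hence the modules $\mathcal C_k(Q,\Bbbk)$ are free of the expected rank at every position $0\le k\le n+1$, and by \eqref{E:conic-kernel} one has $\Ker\partial_n=\RH_n(\Delta^n(Q),\Bbbk)$. Applying Proposition~\ref{P:conic-simplicial} to $P_{<a}$ (whose elements all have $P$-dimension less than $d_P(a)$, hence satisfy the homology-sphere assumption) identifies $\RH_n(\Delta(P_{<a}),\Bbbk)$ with $H_n(\mathcal C_\bullet(P_{<a},\Bbbk))$, while the inclusion of $P$-graded subcomplexes $\mathcal C_\bullet(P_{<a},\Bbbk)\hookrightarrow\mathcal C_\bullet(Q,\Bbbk)$ transports $n$-cycles.

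To construct $V$, I would pick cycle representatives $z_1,\dots,z_r\in\mathcal C_n(P_{<a},\Bbbk)$ of a basis of $\RH_n(\Delta(P_{<a}),\Bbbk)$ and invoke the hypothesis to lift each $z_i$ to an element $w_i\in\mathcal C_{n+1}(Q,\Bbbk)=\bigoplus_{b\in Q,\,d(b)=n+1}\RH_n(\Delta(Q_{<b}),\Bbbk)$ with $\partial_{n+1}w_i=z_i$. Then $V$ is the set of elements of $P$-dimension $n+1$ in $Q$ that appear in the support of some $w_i$ and are not already below $a$ in $P$. Items (1) and (2) are then immediate from the construction, and (3) follows by length-counting: any new chain through some $v\in V$ has length at most $d_P(v)+1=n+2\le d_P(a)$, so dimensions are preserved.

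The main content is (4), (5), (6), which I would prove by Mayer--Vietoris applied to the decomposition $\Delta(P'_{<a})=\Delta(P_{<a})\cup\bigcup_{v\in V}\Delta(P_{\le v})$, noting that each cone $\Delta(P_{\le v})$ is contractible with apex $v$. The intersections $\Delta(P_{<a})\cap\Delta(P_{\le v})$ need to be identified as homology $n$-spheres for the argument to go through; this requires choosing the lifts $w_i$ with additional care so that for each $v$ in their support one has $P_{<v}\subseteq P_{<a}$. Granted this, the Mayer--Vietoris sequence yields (5) in degrees $k\ge n+1$, and in degree $n$ the connecting homomorphism sends the generators of the intersections' top homology to precisely the classes $[z_i]$, killing them in $\RH_n(\Delta(P'_{<a}),\Bbbk)$ and establishing (6); an analogous computation for the filters $P'_{<c}$ with $a<_Pc$, together with (2) for the case $a\not\le_P c$ and (5) itself for $c=a$, then yields (4). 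The main obstacle will be precisely this delicate control over the intersections and the resulting identification of the Mayer--Vietoris connecting map with the algebraic boundary $\partial_{n+1}$ provided by the hypothesis.
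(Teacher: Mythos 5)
Your overall strategy matches the paper's: add new Hasse edges $v<a$ for a finite set $V$ of dimension-$(n+1)$ elements of $Q=P_{\deg\le\alpha}$, chosen by lifting cycle representatives along the exact conic chain complex, so that the new cones fill in $\RH_n(\Delta(P_{<a}),\Bbbk)$. However, the Mayer--Vietoris implementation has two genuine unresolved gaps, both centered on condition (5), and both of which the paper's argument is specifically designed to circumvent.

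First, you acknowledge that the intersections $\Delta(P_{<a})\cap\Delta(P_{\le v})=\Delta(P_{<a}\cap P_{\le v})$ need to be the homology $n$-spheres $\Delta(P_{<v})$, i.e.\ that $P_{<v}\subseteq P_{<a}$, and you defer this to a ``careful choice'' of lifts. But the components $w_v$ of a lift are simply $n$-cycles of $\Delta(P_{<v})$ with $v\in Q$ of dimension $n+1$, and nothing in the hypothesis $\HH_n\bigl(\widetilde{\mathcal C}_\bullet(Q,\Bbbk)\bigr)=0$ controls whether the elements below such a $v$ lie below $a$. In fact the paper's proof makes no such claim about the new vertices $c\in C$, which strongly suggests it is not generally arrangeable; the paper never needs $P_{<c}\subseteq P_{<a}$, only the (automatic) containment $P_{<c}\subseteq P''_{<a}$.

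Second, and more serious: your construction takes $V$ to be \emph{all} dimension-$(n+1)$ elements appearing in the supports of \emph{arbitrary} lifts $w_i$. Even granting the sphere identification of the intersections, the Mayer--Vietoris sequence in degree $n+1$ reads
$0=\RH_{n+1}(Z_v)\to\RH_{n+1}(X)\to\RH_{n+1}(X\cup Y_v)\to\RH_n(Z_v)\to\RH_n(X)$,
so (5) holds for one gluing precisely when the generator of $\RH_n(Z_v)\cong\Bbbk$ maps to a \emph{nonzero} class in $\RH_n(X)$; if it maps to zero, a new $(n+1)$-cycle is created and (5) fails. Nothing in your choice of $V$ rules out such dead cones, nor does it rule out linear dependence among the images $[w_v]$, which causes the same problem when gluing multiple cones. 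This is exactly what the paper's minimality argument is for: it inducts on $r=\dim_\Bbbk\RH_n(\Delta(P_{<a}),\Bbbk)$, kills a \emph{single} nonzero class $h$ per step, and chooses among the lifts $t$ of $h$ one for which $C=A_t\smallsetminus P_{<a}$ is minimal. That minimality, combined with the homology-sphere hypothesis on the $\Delta(P_{<c})$, shows that any $(n+1)$-cycle of $\Delta(P''_{<a})$ in reduced form must avoid $C$ entirely (since otherwise one could subtract a scalar multiple of $t$ and contradict minimality), whence $\RH_{n+1}$ is unchanged and $\RH_n$ drops by at least one. Your proposal has no analogue of this control, and without it the argument does not go through.
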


\begin{proof}
Let $Y$ be a basis  
of $\RH_n\bigl(\Delta(P_{<a}),\Bbbk\bigr)$, and choose a well-ordering on $Y$.  

The first key observation is that when $m\le n$ every 
element $g$ of $\RH_m(\Delta(P_{<a}), \Bbbk)$ can be  
represented by an $m$-cycle $z$ of the form 
\begin{equation}\elabel{E:reduced-form}
z= \sum_{c\in A_z}
[c, z_c], 
\end{equation}
where $A_z$ is an antichain in $P_{<a}$ with 
$d(c)=m$ for each $c\in A_z$, and 
each $(m-1)$-chain $z_c$ is a cycle of 
$\Delta(P_{<c})$. 
Indeed, any $m$-chain $w$ of $\Delta(P)$ 
can be uniquely written in the form 
\begin{equation}\elabel{E:cone-form}
w = \sum_{c\in A_w} [c, w_c], 
\end{equation}
for some subset $A_w$ of $P$ and
$(m-1)$-chains $w_c$ of $\Delta(P_{<c})$. Let 
$k=\max\{ d(c) \mid c\in A_w\}$, and let 
$c_1,\dots, c_l$ be the elements of $A_w$ of 
dimension $k$. If $w$ is a cycle, this forces each 
$w_{c_i}$ to be a cycle.  
If in addition $k>m$, then by our assumptions $w_{c_i}$ is 
also a boundary of $\Delta( P_{<c_i})$, thus for each 
$i$ we have a chain $v_i$ of $\Delta( P_{<c_i})$ such 
that $\partial(v_i)=w_{c_i}$. Therefore, if 
$k>m$ and $w$ represents  
$g$ then $A_w\subseteq P_{<a}$ and the cycle 
\[
w'= w + \partial\left(\sum_{i=1}^m [c_i, v_i] \right)
\] 
also represents $g$ and has 
$\max\{d(c) \mid c\in A_{w'}\}< k$. Iterating this 
procedure we arrive at the desired cycle $z$. 

Second, we note that, considered as a cycle of 
$\Delta(P)$, a cycle $z$ of the form \eqref{E:reduced-form} 
that represents an element $h\in Y$ is an element of 
$\widetilde{\mathcal C}_\bullet(P_{\deg\le\alpha}, \Bbbk)$. 
Since we also assume 
$\HH_n\widetilde{\mathcal C}_\bullet(P_{\deg\le\alpha}, \Bbbk)=0$, 
this yields an $(n+1)$-chain $w=w(h)$ in 
$\widetilde{\mathcal C}_\bullet(P_{\deg\le\alpha},\Bbbk)$ such that 
$\partial(w)=z$, in particular 
by Proposition~\ref{P:conic-elements} we have $d(c)=n+1$ 
and $w_c$ is a cycle for each $c\in A_w$. 
Note that while $A_w\subseteq P_{\deg\le\alpha}$, we also have 
$A_w\not\subset P_{<a}$. 

Third, using transfinite induction, for each $h\in Y$ 
select a chain $w=w(h)$ as above and such that the (finite) set  
$C_h=A_w\setminus (P_{<a}\cup\bigcup_{g<h}C_g)$ is minimal. 
Set $C=\bigcup_{h\in Y}C_h$ and make a new poset structure 
$P'$ on the set $B$ by adding to the relations from $P$ 
the new relations $c<a$ for $c\in C$ and taking the poset 
they generate. 
It is now straightforward to check that conditions 
(1) through (3) of the Lemma are satisfied for $P'$. 
Furthermore 
since in $P'_{<a}$ the elements of $C$ are maximal but 
of dimension $n+1< d(a)$, we get that 
$\RH_k(\Delta(P_{<a}), \Bbbk)=\RH_k(\Delta(P'_{<a}), \Bbbk)$ 
for $k\ge n+2$. 

Next, 
we observe that any $(n+1)$-cycle 
$z$ of $\Delta(P'_{<a})$ in the form 
\eqref{E:reduced-form}
is in fact a cycle of $\Delta(P_{<a})$. Indeed, if  
$A_z\cap C$ is nonempty, then there is a maximal $h\in Y$ such that 
there exists $c\in A_z\cap C_h$. Then for $w=w(h)$ we get that 
$z_c$ and $w_c$ are both $n$-cycles in 
the $n$-dimensional homology sphere 
$\Delta(P'_{<c})=\Delta(P_{<c})$ hence are the same up to 
a scalar multiple $s\in\Bbbk$. Therefore the chain 
$w-sz$ contradicts the minimality of the choices of $w(h)$ and $C_h$. 
It follows that 
$
\RH_{n+1}(\Delta(P'_{<a}),\Bbbk)=
\RH_{n+1}(\Delta(P_{<a}),\Bbbk).
$
Thus conditions (4) and (5) of the Lemma are also satisfied. 

Finally, every element of $\RH_n(\Delta(P'_{<a}),\Bbbk)$ 
can be represented by a cycle of the form 
\eqref{E:reduced-form}, and all these are also cycles of 
$\Delta(P_{<a})$. Therefore the map $\id_B\: P \lra P'$ 
induces a surjection 
$
\RH_{n}(\Delta(P_{<a}),\Bbbk)\lra 
\RH_{n}(\Delta(P'_{<a}),\Bbbk)
$  
that has, by construction, every element $h\in Y$ in its kernel. 
\end{proof}



\begin{theorem}\mlabel{T:hcw-from-inc}
Let $\deg\: B\lra \mathbb Z^m$ be a function. 
Let $P$ be a poset structure on the set $B$ 
such that $(P,\deg)$ is a labeled poset, 
for each $a\in B$ 
one has \ $d_P(a)<\infty$ \ and \ 
$\smash{\dim_\Bbbk \RH_{d_P(a)-1}(\Delta(P_{<a}), \Bbbk)=1}$,  
and such that 
$\smash{\widetilde{\mathcal C}_\bullet(P_{\deg\le\alpha}, \Bbbk)}$ 
is exact whenever $P_{\deg\le\alpha}$ is non-empty. 
Suppose also that a group $G$ acts on $P$. 
Then there exists an hcw-poset structure $Q$ on the set 
$B$ such that 
\begin{enumerate}
\item 
$\id_B\: P\lra Q$ and $\deg\: Q\lra\mathbb Z^m$ 
are morphisms of posets;  

\item
for each $a\in B$ we have 
$\dim \Delta(P_{\le a})=\dim \Delta(Q_{\le a})$; 

\item
the action of $G$ on $P$ is also an action on $Q$; and 

\item
$
\widetilde{\mathcal C}_\bullet(P,\Bbbk)=
\widetilde{\mathcal C}_\bullet(Q,\Bbbk).
$  
\end{enumerate}
\end{theorem}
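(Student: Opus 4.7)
The plan is to construct the poset $Q$ by iterating Lemma~\ref{L:cavity-conic} in a carefully chosen order that respects the dimension function $d_P$. I would enumerate the elements of $B$ as $a_1, a_2, \ldots, a_N$ so that $d_P(a_i) \le d_P(a_{i+1})$ for all $i$, and build a sequence of poset structures $P = P^{(0)}, P^{(1)}, \ldots, P^{(N)} = Q$ on $B$. The passage from $P^{(i)}$ to $P^{(i+1)}$ processes the single element $a_{i+1}$ by successive cavity fillings: starting with $n = d_P(a_{i+1}) - 2$ and decreasing to $n = 0$, apply the lemma once for each $n$, producing intermediate posets that accumulate new comparisons extending downward from $a_{i+1}$ in the Hasse diagram.

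The inductive invariants I would maintain for $P^{(i)}$ are (1) that $\id_B \: P \lra P^{(i)}$ and $\deg \: P^{(i)} \lra \mathbb{Z}^m$ are morphisms of posets, (2) that $\dim \Delta(P^{(i)}_{\le c}) = \dim \Delta(P_{\le c})$ for every $c \in B$, (3) that $\widetilde{\mathcal C}_\bullet(P^{(i)}_{\deg\le\alpha}, \Bbbk) = \widetilde{\mathcal C}_\bullet(P_{\deg\le\alpha}, \Bbbk)$ for every $\alpha \in \mathbb{Z}^m$, and (4) that for each $j \le i$ the open filter $P^{(i)}_{<a_j}$ is a homology $(d_P(a_j)-1)$-sphere. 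The first three are guaranteed by conditions (1)--(4) of the lemma. For (4) after processing $a_{i+1}$, condition (6) of the lemma ensures $\RH_n(\Delta(P^{(i+1)}_{<a_{i+1}}), \Bbbk) = 0$ for $n = 0,1,\ldots,d_P(a_{i+1})-2$, while condition (5) propagates the top-dimensional homology unchanged from $P$, which by the theorem's hypothesis is one-dimensional; combined with invariant (2) this makes $P^{(i+1)}_{<a_{i+1}}$ a homology sphere of the right dimension.

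To justify each application of the lemma I need to check that its hypotheses are in force. The dimension bound $d(a_{i+1}) \ge n+2$ is immediate from invariant (2) and the choice of $n$. The assumption that every $b$ with $d_P(b) < d_P(a_{i+1})$ has homology-sphere open filter holds because such a $b$ must equal some $a_j$ with $j \le i$ (the chosen enumeration rules out $j > i+1$), and invariant (4) delivers the statement. Finally, the exactness condition $\HH_n(\widetilde{\mathcal C}_\bullet(P^{(i)}_{\deg\le\deg(a_{i+1})}, \Bbbk)) = 0$ follows by combining invariant (3) with the theorem's original exactness assumption.

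The main obstacle is establishing invariant (3) for all sub-posets rather than just for the full poset $P^{(i)}$. Lemma~\ref{L:cavity-conic} as stated only guarantees $\widetilde{\mathcal C}_\bullet(P,\Bbbk) = \widetilde{\mathcal C}_\bullet(P',\Bbbk)$ at the top level. I would resolve this by observing that all new comparisons added in a single application extend downward from the focal element $a$, with the newly adjacent children lying in $P_{\deg\le\deg(a)}$. Consequently, for any $\alpha'$ with $\deg(a) \not\le \alpha'$ the sub-poset $P^{(i)}_{\deg\le\alpha'}$ is literally unchanged; and for $\alpha'$ with $\deg(a) \le \alpha'$ one applies Lemma~\ref{L:cavity-conic} a second time to $P^{(i)}_{\deg\le\alpha'}$ with the same $a$ and $n$ (its hypotheses restrict cleanly: dimensions, homology-sphere conditions on lower-dimensional filters, and the exactness condition at $\deg(a)$ all descend to the sub-poset because $P^{(i)}_{\le a}$ sits entirely inside $P^{(i)}_{\deg\le\alpha'}$). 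Selecting the same bounding chain $t$ as in the full poset produces the same new edges, so the sub-poset equality of augmented conic complexes follows. Once this compatibility is in place, the iteration closes and $Q = P^{(N)}$ satisfies conclusions (1)--(3) of the theorem.
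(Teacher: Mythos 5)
Your argument is exactly the paper's (one-sentence) proof with the bookkeeping filled in: iterate Lemma~\ref{L:cavity-conic} over $a\in B$ in non-decreasing order of $d_P(a)$, and for each $a$ over $n=d_P(a)-2,\dots,0$, tracking the invariants that make each successive application of the lemma legitimate. You correctly flag the one genuine subtlety the paper leaves unaddressed, namely that the exactness hypothesis must survive for all sub-posets $P^{(i)}_{\deg\le\alpha}$ even though conclusion (4) of the lemma only speaks of the full poset; your fix (re-running the lemma's construction on the sub-poset with the same focal element, $n$, and bounding chain $t$, and noting it produces the same new comparisons) is sound, though a cleaner way to close this loop is to observe that the equality $\widetilde{\mathcal C}_\bullet(P,\Bbbk)=\widetilde{\mathcal C}_\bullet(P',\Bbbk)$ in Lemma~\ref{L:cavity-conic}(4) is an equality of $B$-graded subcomplexes of $C_\bullet(\Delta(P'),\Bbbk)$ (the component for $a\in B$ consists precisely of the chains whose faces all have $a$ as maximum vertex), so restricting to the components with $\deg(a)\le\alpha$ gives $\widetilde{\mathcal C}_\bullet(P_{\deg\le\alpha},\Bbbk)=\widetilde{\mathcal C}_\bullet(P'_{\deg\le\alpha},\Bbbk)$ directly.
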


\begin{proof} 
Note that the elements in each $G$-orbit $O$ have the same dimension, 
which we denote by $d_P(O)$. Now we apply the following two-step process 
repeatedly to all orbits $O$, in increasing order of $d_P(O)$: 
\begin{enumerate}  
\item 
Pick $a\in O$ and use Lemma~\ref{L:cavity-conic} repeatedly with   
$n = d(a)-2, d(a)-3,\dots, 0$, arriving at a poset structure 
where the open lower set generated by $a$ is a homology sphere; and then  

\item
Use the action of $G$ and propagate the new covering relations  
involving $a$ obtained in step (1) to produce new covering relations 
involving the remaining elements of the orbit $O$. 
\end{enumerate} 
At the end of this iterative process we arrive at the desired 
hcw-poset $Q$. 
\end{proof}

\begin{remark}
If $P_1=P(\mathcal F_\bullet, B')$ and 
$P_2=P(\mathcal F_\bullet, B'')$ are two non-isomorphic 
incidence posets for the same minimal free resolution $\mathcal F_\bullet$ 
of a monomial ideal $I$, then in general 
any hcw-posets $Q_1$ and $Q_2$ produced from them by 
applying the procedure from the proof of Theorem~\ref{T:hcw-from-inc}
will also be non-isomorphic. A trivial example for this is when 
the incidence posets are already hcw, like the two posets from 
Example~\ref{E:TwoIncidence}. 
\end{remark}

\begin{example}\label{E:MakePPHCW}
Applying the procedure from the proof of 
Theorem~\ref{T:hcw-from-inc} to 
the incidence poset $P(\mathcal F_\bullet, B)$ from 
Example~\ref{E:hcw}(c) produces a hcw-poset 
with Hasse diagram obtained by adding 
the single new dashed edge 
to the solid edges of the Hasse diagram from 
Example~\ref{E:PPincidence}:  
\begin{center}
 \begin{tikzpicture}[scale=0.75, 
  vertices/.style={draw, fill=black, circle, inner sep=1pt},
  vertices2/.style={draw,fill=white,circle,inner sep=2pt}]
             \node [vertices] (1) at (-9/2+0,2){};
             \node [vertices] (2) at (-9/2+1,2){};
             \node [vertices] (20) at (-9/2+2,2){};
             \node [vertices] (4) at (-9/2+3,2){};
             \node [vertices] (5) at (-9/2+4,2){};
             \node [vertices] (8) at (-9/2+5,2){};
             \node [vertices] (25) at (-9/2+6,2){};
             \node [vertices] (12) at (-9/2+7,2){};
             \node [vertices] (13) at (-9/2+8,2){};
             \node [vertices] (15) at (-9/2+9,2){};
             \node [vertices] (3) at (-7+0,4){};
             \node [vertices] (21) at (-7+1,4){};
             \node [vertices] (7) at (-7+2,4){};
             \node [vertices] (6) at (-7+3,4){};
             \node [vertices] (9) at (-7+4,4){};
             \node [vertices] (10) at (-7+5,4){};
             \node [vertices] (29) at (-7+6,4){};
             \node [vertices] (26) at (-7+7,4){};
             \node [vertices] (17) at (-7+8,4){};
             \node [vertices] (22) at (-7+9,4){};
             \node [vertices] (14) at (-7+10,4){};
             \node [vertices] (23) at (-7+11,4){};
             \node [vertices] (16) at (-7+12,4){};
             \node [vertices] (18) at (-7+13,4){};
             \node [vertices] (27) at (-7+14,4){};
             \node [vertices2] (33) at (-4+9,6){};
             \node [vertices] (11) at (-4+0,6){};
             \node [vertices] (30) at (-4+1.5,6){};
             \node [vertices] (24) at (-4+3,6){};
             \node [vertices] (19) at (-4+4.5,6){};
             \node [vertices] (28) at (-4+6,6){};
             \node [vertices] (31) at (-4+7.5,6){};
             \node [vertices2] (32) at (-0+0,8){};
     \foreach \to/\from in {1/16, 1/7, 1/3, 2/9, 2/17, 2/3, 3/11, 3/19, 4/21, 4/6, 4/7, 5/10, 5/6, 5/23, 6/24, 6/11, 7/28, 7/11, 8/29, 8/9, 8/10, 9/30, 9/11, 10/11, 10/31, 11/32, 12/17, 12/14, 12/22, 13/14, 13/18, 13/23, 14/24, 14/19, 15/16, 15/18, 15/27, 16/28, 16/19, 17/30, 17/19, 18/19, 18/31, 19/32, 20/21, 20/26, 20/22, 21/24, 21/28, 22/24, 22/30, 23/24, 23/31, 24/32, 25/29, 25/26, 25/27, 26/28, 26/30, 27/28, 27/31, 28/32, 29/30, 29/31, 30/32, 31/32, 26/33, 22/33, 14/33, 18/33, 27/33}
     \draw [-] (\to)--(\from);
     \draw [-,dashed] (32)--(33);
     \end{tikzpicture}
\end{center}  
\end{example}

\section{Resolutions supported on hcw-posets}\mlabel{S:Main}

We are now ready to prove the second main result of this paper: 

\begin{theorem}\mlabel{T:cl-supports-I}
Let $\Bbbk$ be a field, and 
let \thinspace $I$ be a monomial ideal in 
$R=\Bbbk[x_1,\dots, x_m]$. 

There exists a labeled hcw-poset \thinspace $(Q,\deg)$ 
that supports a minimal free $\mathbb Z^m$-graded resolution of 
\thinspace $I$ over $R$. 
\end{theorem}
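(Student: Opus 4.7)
The plan is to assemble three results already at our disposal: Lemma~\ref{L:multigraded-basis-minimal-support}, Theorem~\ref{T:MFR-from-poset}, and Theorem~\ref{T:hcw-from-inc}. First I would pick a minimal $\mathbb{Z}^m$-graded free resolution $\mathcal{F}_\bullet$ of $I$. Since $I$ is torsion-free as an ideal in the domain $R$, Lemma~\ref{L:multigraded-basis-minimal-support} yields a homogeneous basis $B$ of minimal support. Form the incidence poset $P=P(I,B)$ with the tautological degree map $\deg\colon P\to\mathbb{Z}^m$. By Theorem~\ref{T:MFR-from-poset}, $\deg$ is a poset morphism and the homogenization of $\mathcal{C}_\bullet(P,\Bbbk)$ recovers $\mathcal{F}_\bullet$; in particular, by the proposition immediately preceding Definition~\ref{D:poset-support}, $\widetilde{\mathcal{C}}_\bullet(P_{\deg\le\alpha},\Bbbk)$ is exact for every nonempty $P_{\deg\le\alpha}$, which supplies the exactness hypothesis of Theorem~\ref{T:hcw-from-inc}.

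Next I would verify the remaining hypothesis of Theorem~\ref{T:hcw-from-inc}, namely $\dim_\Bbbk\widetilde{H}_{d(a)-1}(\Delta(P_{<a}),\Bbbk)=1$ for every $a\in B$. For this I would reduce modulo $(x_1-1,\dots,x_m-1)$ to pass to a pure field-theoretic situation. Since this reduction is exact on $\mathbb{Z}^m$-graded modules, $\overline{\mathcal{F}}_\bullet$ is a $\Bbbk$-free resolution of $\overline{I}$, and a short multigraded computation (the shift maps on $I$ become isomorphisms in sufficiently large multidegrees) gives $\dim_\Bbbk\overline{I}=1$. The lemma preceding Lemma~\ref{L:MinSuppExists} shows $\overline{B}$ is still a basis of minimal support with $\bar f_n(\bar b)\ne 0$ for every $\bar b$, and Lemma~\ref{L:MinSuppExists} gives non-comparability of boundary supports within each $B_n$, a property that descends to $\overline{B}_n$. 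Noting that $P(\overline{\mathcal{F}}_\bullet,\overline{B})=P$ (each incidence coefficient over $R$ is a nonzero monomial, which remains nonzero after the substitution), Theorem~\ref{T:conic-iso} now applies and yields $\dim_\Bbbk H_{d(a)}(\Delta(P_{\le a}),\Delta(P_{<a}),\Bbbk)=1$ for every $a\in B$. Since $\Delta(P_{\le a})$ is the cone over $\Delta(P_{<a})$ with apex $a$ and hence contractible, the long exact sequence of the pair identifies this group with $\widetilde{H}_{d(a)-1}(\Delta(P_{<a}),\Bbbk)$, giving the required dimension.

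With both hypotheses of Theorem~\ref{T:hcw-from-inc} confirmed, I apply it to obtain an hcw-poset $Q$ on the underlying set $B$ together with a poset morphism $\deg\colon Q\to\mathbb{Z}^m$ and the equality $\widetilde{\mathcal{C}}_\bullet(P,\Bbbk)=\widetilde{\mathcal{C}}_\bullet(Q,\Bbbk)$. Dropping the augmentation and homogenizing with respect to $\deg$, the homogenization of $\mathcal{C}_\bullet(Q,\Bbbk)$ coincides with that of $\mathcal{C}_\bullet(P,\Bbbk)$, which by Theorem~\ref{T:MFR-from-poset} is the minimal free resolution of $I$. Hence the minimal free resolution is supported on the hcw-poset $Q$, as required.

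The main obstacle is not the final assembly but the careful verification that the mod-$(x_i-1)$ reduction preserves every property needed to invoke Theorem~\ref{T:conic-iso}, together with the small topological identification of $H_{d(a)}(\Delta(P_{\le a}),\Delta(P_{<a}),\Bbbk)$ with $\widetilde{H}_{d(a)-1}(\Delta(P_{<a}),\Bbbk)$ via the cone structure. Once these pieces are in place, the proof simply stitches Theorems~\ref{T:MFR-from-poset}, \ref{T:conic-iso}, and \ref{T:hcw-from-inc} together.
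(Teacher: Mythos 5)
Your proof is correct and follows essentially the same route as the paper: Lemma~\ref{L:multigraded-basis-minimal-support} produces a homogeneous basis of minimal support, Theorem~\ref{T:MFR-from-poset} together with Theorem~\ref{T:conic-iso} yields the exactness and homology-sphere hypotheses, and Theorem~\ref{T:hcw-from-inc} then produces the desired hcw-poset $Q$. The only difference is that you spell out in detail the mod-$(x_1-1,\dots,x_m-1)$ reduction needed to place yourself in the setting of Theorem~\ref{T:conic-iso}, whereas the paper's proof of Theorem~\ref{T:cl-supports-I} leaves this implicit by invoking Theorem~\ref{T:conic-iso} directly, relying on the reduction already carried out in the proof of Theorem~\ref{T:MFR-from-poset}.
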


\begin{proof}
Let $\mathcal F_\bullet$ be a minimal free $\mathbb Z^m$-graded 
resolution of $I$ over $R$. By 
Lemma~\ref{L:multigraded-basis-minimal-support} there exists a 
homogeneous basis $B$ of minimal support for $\mathcal F_\bullet$. 
Let $\deg\: B \lra \mathbb Z^m$ be the map that assigns to each 
element of $B$ its $\mathbb Z^m$-degree as an element of 
$\mathcal F_\bullet$.  By Theorem~\ref{T:MFR-from-poset}, for  
the incidence poset $P=P(\mathcal F_\bullet, B)$ we have that 
$\deg\: P \lra \mathbb Z^m$ is a morphism of posets,  
and the conic chain complex $\mathcal C_\bullet(P, \Bbbk)$ 
produces (up to isomorphism) after homogenization 
the resolution $\mathcal F_\bullet$.  In particular, the chain 
complex $\smash{\widetilde{\mathcal C}_\bullet(P_{\deg\le\alpha},\Bbbk)}$ 
is exact whenever $P_{\deg\le\alpha}$ is nonempty.  
By Remark~\ref{R:ranked-poset} we have $d(a)=n$ for every $a\in B_n$, 
hence Theorem~\ref{T:conic-iso} yields that 
$\smash{\dim_\Bbbk\RH_{d(a)-1}(\Delta(P_{<a}),\Bbbk)=1}$ for each $a\in P$. 
Therefore Theorem~\ref{T:hcw-from-inc} produces the desired  
hcw-poset structure $Q$ on the set $B$. 
\end{proof}

\begin{example}\label{E:PPHCWSuppRes}
As the proof of Theorem~\ref{T:cl-supports-I} indicates, 
the hcw-poset created in Example~\ref{E:MakePPHCW} 
supports a minimal free resolution of the Stanley-Reisner ideal 
$I$ from Example~\ref{E:PPRes}. 
\end{example}

In general, the 
isomorphism class of a hcw-poset $Q$ that supports a minimal 
free resolution of a monomial ideal $I$ is not unique. 
When $I$ belongs to the class of rigid ideals, as observed in 
Remark~\ref{R:RigidUniqueBasis} 
there is a unique choice for a basis of minimal support, and 
the degree constraints on the basis elements yield that 
the corresponding incidence poset is exactly the Betti poset of 
$I$ over $\Bbbk$. (The Betti poset of a 
monomial ideal $I$ over $\Bbbk$ is the set 
of the $\mathbb Z^m$-degrees 
of the elements in a homogeneous basis of a minimal free 
$\mathbb Z^m$-graded resolution of $I$. See \cite{ClMa,ClMa2,TchVa} for more 
on Betti posets.)  
It turns out that for rigid ideals the Betti poset is already 
an hcw-poset, and that this property characterizes  
rigid ideals:

\begin{theorem}\mlabel{T:BettihcwiffRigid}
The Betti poset of a monomial ideal is an hcw-poset 
if and only if the ideal is rigid. 
\end{theorem}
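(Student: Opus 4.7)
My plan rests on combining the uniqueness of the minimal-support basis for rigid ideals (Remark~\ref{R:RigidUniqueBasis}) with the dimension-matching results of Theorems~\ref{T:conic-iso} and~\ref{T:hcw-from-inc}, and on exploiting the fact that $B_I$ is equipped with the full $\mathbb Z^m$-order on its underlying set.

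For the direction ``$I$ rigid $\Rightarrow B_I$ is hcw'' I would let $B$ be the essentially unique homogeneous basis of the minimal resolution $\mathcal F_\bullet$; by Remark~\ref{R:RigidUniqueBasis} it is automatically of minimal support. Set $P=P(I,B)$. Theorem~\ref{T:MFR-from-poset} gives that $\mathcal C_\bullet(P,\Bbbk)$ homogenizes to $\mathcal F_\bullet$, so each $\widetilde{\mathcal C}_\bullet(P_{\deg\le\alpha},\Bbbk)$ is exact, and Theorem~\ref{T:conic-iso} supplies $\dim_\Bbbk\RH_{d_P(a)-1}(\Delta(P_{<a}),\Bbbk)=1$ for every $a\in B$. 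Condition (R1) makes $\deg$ injective on each level $B_n$, and a short degree/minimality argument (in the spirit of Lemma~\ref{L:MinSuppExists}) rules out coincidences of multidegrees across different levels, so $\deg$ identifies $B$ with $B_I$ as sets. Under this identification the incidence order on $P$ is contained in the $\mathbb Z^m$-order on $B_I$, and I would next argue that they in fact agree, so that $P=B_I$ as posets. Once this is in hand, applying Theorem~\ref{T:hcw-from-inc} to $B_I$ causes no edges to be added: the only edges the procedure of Lemma~\ref{L:cavity-conic} can insert are of the form $c<a$ with $\deg c<\deg a$, but all such pairs are already comparable in $B_I$. Therefore $B_I$ is itself hcw.

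For the direction ``$B_I$ hcw $\Rightarrow I$ rigid'' I would begin with the standard observation that in any hcw-poset the inclusion $\Delta(P_{\le a})\subseteq\Delta(P_{<b})$ together with $\dim\Delta(P_{<b})=d(b)-1$ forces $a<b$ to imply $d(a)<d(b)$; hence elements of the same $B_I$-dimension are pairwise $\mathbb Z^m$-incomparable. It then suffices to show that $d(\alpha)$ equals the unique homological degree $n$ in which $\beta_{n,\alpha}\neq 0$. Granted this, (R1) holds because $B_I$ carries each multidegree only once, and (R2) holds because basis elements in the same homological degree have equal $d$-value and are therefore $\mathbb Z^m$-incomparable.

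The main obstacle lies in the two identifications just described: (i) that for rigid $I$ the $\mathbb Z^m$-order on $B$ coincides with the incidence order $\leq_P$, so the candidate edges in Theorem~\ref{T:hcw-from-inc} really are redundant; and (ii) that for $B_I$ hcw the poset dimension $d(\alpha)$ matches the homological degree of the corresponding basis element. I would attack (i) by invoking the uniqueness clause of Remark~\ref{R:RigidUniqueBasis}, arguing that any missing relation would enable construction of a genuinely different basis of minimal support and contradict uniqueness; this dovetails with the known result \cite{CM,CM2,TV} that the Betti poset of a rigid ideal supports the minimal resolution. I would attack (ii) by applying Proposition~\ref{P:conic-simplicial}, which is valid once $B_I$ is hcw, to match the reduced homology $\RH_{d(\alpha)-1}(\Delta(B_{I,<\alpha}),\Bbbk)=\Bbbk$ with the Hochster-type formula expressing $\beta_{n,\alpha}$ as the reduced homology of an appropriate subcomplex, forcing the dimension $d(\alpha)$ to agree with $n$.
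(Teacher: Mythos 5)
Your two directions need separate assessments.

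For the direction ``$B_I$ hcw $\Rightarrow I$ rigid,'' your plan is essentially the paper's: observe that $a<b$ in an hcw-poset forces $d(a)<d(b)$, then use the identification of $\beta_{n,\alpha}$ with the reduced homology of $\Delta\bigl((B_I)_{<\alpha}\bigr)$ to force (R1) (homology-sphere condition gives a unique nonzero Betti number equal to $1$ in each $\alpha$) and (R2) (two comparable multidegrees with Betti numbers in the same homological degree would have to have equal $d$-values, impossible). The paper invokes the background relation from \cite{CM,CM2,TV} directly rather than routing through Proposition~\ref{P:conic-simplicial}, but this is essentially the same argument and your version is fine modulo making that background precise.

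For the direction ``$I$ rigid $\Rightarrow B_I$ hcw,'' there is a genuine gap. Your entire argument hinges on the identification $P(I,B)=B_I$ \emph{as posets}, i.e.\ that the transitive closure of the incidence relations in the minimal resolution recovers the full $\mathbb{Z}^m$-order on the set of Betti degrees. You acknowledge this as the ``main obstacle'' but the proposed fix -- ``any missing relation would enable construction of a genuinely different basis of minimal support and contradict uniqueness'' -- does not work: the incidence poset is a function of a \emph{fixed} basis, and uniqueness of the basis up to associates says nothing about whether the transitive closure of direct adjacencies fills in every coordinatewise comparison between two Betti degrees. A priori one could have $\alpha<\beta$ in $\mathbb{Z}^m$ with $\alpha$ a generator degree and $\beta$ a second-syzygy degree yet no first-syzygy degree $\gamma$ with $\alpha<\gamma<\beta$ realized by a chain of incidences, and nothing in Remark~\ref{R:RigidUniqueBasis} rules this out. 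The paper's proof of this direction is entirely different and avoids the incidence poset: it argues directly by induction on $d(a)$ in $B_I$, using rigidity condition (R2) to show that if $\Delta\bigl((B_I)_{<a}\bigr)$ had its single nonvanishing reduced homology in dimension $i<d(a)-1$, then there would be some $b<a$ of dimension $i+1$, and the inductive hypothesis applied to $b$ would place two comparable Betti degrees in the same homological degree, contradicting (R2). That self-contained induction is what you are missing. (Note also that the paper's preamble does \emph{assert} $P(I,B)=B_I$ for rigid $I$ as an aside, but its proof of the theorem does not rely on it; if you wish to use that route you would need to prove the identification yourself, for instance by first establishing that $B_I$ is hcw -- which is what you are trying to prove.)
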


\begin{proof}
Let $P$ be the Betti poset of a monomial ideal $I$, and  
suppose first $P$  
is an hcw-poset. Rigidity 
condition (R1) is automatically satisfied, since 
$\beta_{a,d(a)-1}=1$ holds for all $a\in P$. If condition 
(R2) did not hold, then we could find two comparable 
elements $a < b$ in $\mathbb Z^m$ where 
$\beta_{a,i}=\beta_{b,i}=1$ for some $i$. 
Under this assumption, the comparability $a< b$ 
would also hold in $P$. However, $a < b$ implies 
that $d(a)<d(b)$. Since $P$ is an hcw-poset  
then $\RH_{d(a)-1}(\Delta(P_{<a}),\Bbbk)\cong\Bbbk$ 
and $\RH_{d(b)-1}(\Delta(P_{<b}),\Bbbk)\cong\Bbbk$ 
are forced, a contradiction.  
Thus, $I$ is rigid.   

Suppose now that $I$ is a rigid monomial ideal. 
For $a\in P$, we will show that $P_{<a}$ is a homology sphere by 
induction on $d=d(a)$. 
If $d=0$ then $\Delta(P_{<a})$ is a non-empty simplicial complex 
when $a$ is not the degree of a minimal generator of $I$. Thus, 
$\RH_{-1}(\Delta(P_{<a}),\Bbbk)=0$. On the other hand, 
when $a$ is the degree of a minimal generator of $I$, 
the poset $P_{<a}$ is empty, and we have 
$\RH_{-1}(\Delta(P_{<a}),\Bbbk)\cong\Bbbk$ as desired. 
Suppose $d\ge 1$ and that for all $b\in P$ with $d(b)< d$, 
we have the desired isomorphisms  
$\RH_{d(b)-1}(\Delta(P_{<b}),\Bbbk)\cong\Bbbk$ and  
$\RH_i(\Delta(P_{<b}),\Bbbk)=0$ for $i\ne d(b)-1$. 
If the element $a\in P$ has $d(a)=d$, 
then since $I$ is rigid, there exists $i$ such that 
$\RH_j(\Delta(P_{<a}),\Bbbk)$ is isomorphic to $\Bbbk$ 
when $j=i$, and vanishes for $j\ne i$. 
We must show that $i=d-1$. We certainly have 
$i\le d-1$ since $P_{<a}$ is $(d-1)$-dimensional. 
Aiming for a contradiction, suppose $i<d-1$. 
The structure of the order complex of $P_{<a}$ guarantees that there 
exists $b< a\in P$ with $d(b)=i+1$. 
Using the fact that $b\in P$, 
we have $\RH_i(\Delta(P_{<b}),\Bbbk)\cong\Bbbk$. 
However, $b < a$ and  
both having nonzero homology in the same dimension $i$ 
contradicts rigidity condition (R2). Thus $i<d-1$ is impossible, 
making $P$ an hcw-poset. 
\end{proof}

We are now all set to demonstrate how techniques developed to study 
cellular resolutions can be applied to resolutions supported on 
posets. As our first example of this process we show how to apply 
methods from \cite{BaSt} to prove that minimal resolutions of 
toric rings are supported on posets.  

Let $L\subseteq \mathbb Z^m$ be a lattice such that $L\cap\mathbb N^m=0$ and 
such that $\mathbb Z^m/L$ is a torsion-free abelian group. 
Let $Q$ be the image of $\mathbb N^m$ under the canonical projection 
$\mathbb Z^m \lra \mathbb Z^m/L$, and for $\alpha\in\mathbb Z^m$ let  
$\bar\alpha$ be its image in $\mathbb Z^m/L$. This 
induces a $(\mathbb Z^m/L)$-grading on the 
polynomial ring $R=\Bbbk[\mathbb N^m]=\Bbbk[x_1,\dots,x_m]$ and 
a structure of a $(\mathbb Z^m/L)$-graded $R$-module on the \emph{toric ring} 
$S=\Bbbk[Q]$. We are interested in the structure of a minimal free 
resolution of $S$ as a $(\mathbb Z^m/L)$-graded $R$-module. To this end, 
let $R[L]$ be the group ring of $L$ over $R$. The inclusion 
$L\subseteq \mathbb Z^m$ together with the standard $\mathbb Z^m$-grading 
on $R$ induce a $\mathbb Z^m$-grading on $R[L]$.  
We will 
exploit the equivalence $\pi$ from the category of $\mathbb Z^m$-graded 
$R[L]$-modules to the category of $(\mathbb Z^m/L)$-graded $R$-modules 
that was established by Bayer and Sturmfels~\cite{BaSt}. 
We recall from \cite{BaSt} 
that if $N$ is a $\mathbb Z^m$-graded $R[L]$-module, then 
$\pi N=N\otimes_{R[L]}(R[L]/J)$ where $J$ is  the 
augmentation ideal generated in 
$R[L]$ by the set $\{1- l \mid l\in L\}$. In particular, when 
$M_L$ is the monomial $R$-module generated over $R$ 
by the elements of $L$ inside the Laurent polynomial 
ring $T=\Bbbk[\mathbb Z^m]=\Bbbk[x_1,\dots,x_m,x_1^{-1},\dots,x_m^{-1}]$, 
then $M_L$ is also a $\mathbb Z^m$-graded $R[L]$-module 
and $\pi M_L\cong S$.   

To state our results, we need to introduce the relevant terminology. 

\begin{definition}
A \emph{toric poset} is a $\mathbb Z^m$-labeled poset $X=(P,\deg)$ together 
with a free action of $L$ on $P$ such that for all $\alpha\in L$ 
and $a\in P$ one has $\deg(\alpha a)=\alpha + \deg(a)$. 
\end{definition}

Let $X=(P,\deg)$ be a toric poset. The $L$-action on $P$ induces 
a free $L$-action on the conic chain complex $\mathcal C_\bullet(X,\Bbbk)$, 
thus giving it the structure of a chain complex of free $\mathbb Z^m$-graded 
$R[L]$-modules. Therefore the equivalence $\pi$ yields a chain complex 
$\pi\mathcal C_\bullet(X,\Bbbk)$ of free $(\mathbb Z^m/L)$-graded $R$-modules. 

\begin{definition} 
We say that a chain complex $\mathcal F_\bullet$ of 
$(\mathbb Z^m/L)$-graded $R$-modules 
is \emph{supported on a toric poset} $X$ if   $\mathcal F_\bullet$ is 
isomorphic to the chain complex $\pi\mathcal C_\bullet(X,\Bbbk)$. 
\end{definition}

The following is the third main result of this paper:

\begin{theorem}\label{T:toric-resolutons-are-hcw} 
There exists a toric hcw-poset that supports a 
minimal $(\mathbb Z^m/L)$-graded free resolution of the toric ring 
$S$ over $R$. 
\end{theorem}

\begin{proof} 
We recall some additional facts from 
\cite{BaSt}. As part of their proof of \cite[Theorem 3.2]{BaSt}, 
Bayer and Sturmfels construct, for each $(\mathbb Z^m/L)$-graded 
$R$-module $N=\bigoplus_{\bar\alpha\in\mathbb Z^m/L}N_{\bar\alpha}$, a 
$\mathbb Z^m$-graded $R[L]$-module 
$\tau N=M=\bigoplus_{\alpha\in\mathbb Z^m}M_{\alpha}$ such that $\pi M = N$. 
More specifically, one has $(\tau N)_\alpha=M_\alpha=N_{\bar\alpha}$ for 
each $\alpha\in\mathbb Z^m$, multiplication by the variable $x_i$ 
is the map $M_{\alpha}\rightarrow M_{\alpha+e_i}$ that is given by the 
map $x_i\: N_{\bar\alpha}\rightarrow N_{\bar\alpha+\bar e_i}$, and  
multiplication by $l\in L$ is the identity map on 
$M_\alpha = N_{\bar\alpha} = N_{\overline{\alpha+l}}=M_{\alpha + l}$. In particular, 
it is immediate from this construction that if $F$ is a free 
$(\mathbb Z^m/L)$-graded $R$-module then $\tau F$ is a free 
$\mathbb Z^m$-graded $R[L]$-module. Furthermore, since $\pi$ is 
fully faithful, for each homogeneous morphism of $(\mathbb Z^m/L)$-graded 
$R$-modules $f\:N' \rightarrow N''$ there is a unique homogeneous morphism    
$\tau f\: \tau N' \rightarrow \tau N''$ of $\mathbb Z^m$-graded 
$R[L]$-modules such that $\pi\tau f =f$. In fact, as shown in the proof 
of \cite[Theorem 3.2]{BaSt}, for each $\alpha\in\mathbb Z^m$ the 
map $(\tau f)_\alpha\: (\tau N')_\alpha\rightarrow (\tau N'')_\alpha$ equals 
the map $f_{\bar\alpha}\: N'_{\bar\alpha}\rightarrow N''_{\bar\alpha}$. 
Therefore $\tau$ is an exact functor that for any 
$(\mathbb Z^m/L)$-graded $R$-free resolution $\mathcal F_\bullet$ yields 
a $\mathbb Z^m$-graded $R[L]$-free resolution $\tau\mathcal F_\bullet$.   
In addition, since $S$ has a homogeneous basis over $\Bbbk$ that 
is preserved under multiplication by each $x_i$, we see  that 
$\tau S$ has a homogeneous basis over $\Bbbk$ that is preserved under 
multiplication by each $x_i$ and by each $l\in L$. As the homogeneous 
components of $S$ are $1$-dimensional in degrees $\bar\alpha\in Q$ and 
zero otherwise, we see  that the homogeneous components of $\tau S$ 
are $1$-dimensional in degrees $\alpha\in \mathbb N^m + L$ and zero 
otherwise. Thus $\tau S$ is isomorphic to the monomial 
module $M_L$. 
 
%
Now let $\mathcal F_\bullet$ be a minimal free resolution of $S$. 
Then $\mathcal G_\bullet=\tau\mathcal F_\bullet$ is a free resolution 
of $M_L$ with $\pi\mathcal G_\bullet=\mathcal F_\bullet$, 
hence is minimal by \cite[Corollary 3.3]{BaSt}. It follows 
that $\mathcal G_\bullet$ is also minimal as a  
$\mathbb Z^m$-graded $R$-free resolution of $M_L$,  
and $L$ acts 
freely on it so that for each homogeneous element $x$ and each 
$\alpha\in L$ we have $\deg(\alpha x)=\alpha + \deg(x)$. 
It suffices to show that $\mathcal G_\bullet$ is supported on a 
toric hcw-poset. 
Note that, by Lemma~\ref{L:multigraded-basis-minimal-support}, 
the complex $\mathcal G_\bullet$ 
has a homogeneous $R$-basis $B$ with minimal boundary support  
and such that $B$ is preserved under the action of $L$.  
Let $P$ be the corresponding incidence poset. Thus $L$ 
acts freely on $P$, and therefore 
the $\mathbb Z^m$-labeled poset $X=(P,\deg)$ is a toric poset. 
Also, by Theorem~\ref{T:MFR-from-poset} the resolution 
$\mathcal G_\bullet$ is supported on $X$.  The desired 
conclusion is now immediate from Theorem~\ref{T:hcw-from-inc}. 
\end{proof} 

It is natural to ask whether one can obtain a toric poset 
that supports a minimal $(\mathbb Z^m/L)$-graded free resolution 
$\mathcal F_\bullet$ of $S$ over $R$ directly from an appropriately 
chosen basis of $\mathcal F_\bullet$. We address this and some other 
interesting facts about posets and toric resolutions 
in the remarks below. 
Since they are not needed in the rest of the paper, we leave their 
reasonably straightforward proofs as exercises for the interested 
reader.

\begin{remarks}\label{R:toric-remarks} 
Let $\mathcal F_\bullet=(F_n,f_n)$ be a minimal $(\mathbb Z^m/L)$-graded 
free resolution of the toric ring $S$ over $R$, and let 
$\mathcal G_\bullet=\tau\mathcal F_\bullet$ be the corresponding 
minimal $\mathbb Z^m$-graded free resolution of $M_L$ over $R[L]$.   
Let $B_n$ be a homogeneous basis of the free $R$-module $F_n$,  
and let $B=\coprod_n B_n$.  

(a) 
For each $\alpha\in\mathbb Z^m$ 
the degree $\bar\alpha$ strand 
$(\mathcal F_{\bullet})_{\bar\alpha}$ 
is isomorphic to the degree $\alpha$ strand $(\mathcal G_\bullet)_\alpha$.  

(b) 
If $(P,\deg)$ is a toric poset that supports  
$\mathcal F_\bullet$ then for each $\alpha\in\mathbb Z^m$ 
the degree $\bar\alpha$ strand 
$(\mathcal F_{\bullet})_{\bar\alpha}$ 
is isomorphic to 
$\mathcal C_\bullet(P_{\deg\le\alpha}, \Bbbk)$. 

(c) 
Let $\nu\: \mathbb Z^m/L \lra \mathbb Z^m$ be a 
homomorphism of abelian groups that is a splitting of the 
quotient map $\mathbb Z^m\lra \mathbb Z^m/L$. Using $\nu$, we consider 
a degree $\bar\alpha$ element $b\in B_n$ as a degree $\nu(\bar\alpha)$ 
element of $\tau F_n$ via the identification 
$(\tau F_n)_{\nu(\bar\alpha)}=(F_n)_{\bar\alpha}$. In this way $B$ is  
identified also as a subset of $\mathcal G_\bullet$ and it is 
straightforward to check that this makes $B$ a homogeneous 
basis of $\mathcal G_\bullet$ over $R[L]$. Therefore 
the set 
\[
L\times B=\{\lambda b \mid b\in B, \ \lambda\in L\}
\] 
is a homogeneous 
basis for $\mathcal G_\bullet$ when considered as a minimal 
$\mathbb Z^m$-graded free resolution of $M_L$ over $R$.  

(d)
Let $n\ge 1$. For any $b\in B_n$ of 
$\mathbb Z^m$-degree $\deg(b)=\alpha$ with 
\[
f_n(b)=
\sum_{
e\in B_{n-1},\, \gamma\in\mathbb N^m 
}
c_{e,\gamma}\, x^\gamma e
\] 
(where each $c_{e,\gamma}\in\Bbbk$)
and any $\mu\in L$ we have
\[
(\tau f_n)(\mu b)=
\sum_{
e\in B_{n-1},\, \gamma\in\mathbb N^m 
}
c_{e,\gamma}\ x^\gamma (\lambda_\gamma+\mu) e, 
\] 
where $\lambda_\gamma:=\nu(\bar\gamma) - \gamma\in L$. 
In particular, $x^\gamma e$ is in the $B$-support of $f_n(b)$ as defined 
in \cite{ChThoma1,ChThoma2} if and only if 
for some (and hence for each) $\mu\in L$ the  
boundary support of $\mu b$ with respect to 
the basis $L\times B$, as per our  
Definition~\ref{D:basis-minimal-support}, 
contains $(\lambda_\gamma + \mu)e$. 

(e) 
The based resolution $(\mathcal F_\bullet, B)$ is a 
\emph{simple resolution} in the 
sense of \cite{ChThoma1,ChThoma2} if and only if $L\times B$ is a 
basis of minimal support for $\mathcal G_\bullet$ over $R$. 

(f) 
The labeled incidence poset 
$\bigl(P(\mathcal G_\bullet, L\times B), \deg\bigr)$ is a toric 
poset, and its isomorphism class does not depend on the choice 
of the splitting $\nu$. Thus we denote it as $P(\mathcal F_\bullet, B)$ 
and call it the \emph{toric incidence poset} of $\mathcal F_\bullet$ 
with respect to $B$. 

(g)
If $(\mathcal F_\bullet, B)$ is a simple resolution then 
$\mathcal F_\bullet$ is supported on $P(\mathcal F_\bullet, B)$. 
In general, $P(\mathcal F_\bullet, B)$ is not an hcw-poset.      
\end{remarks} 

\begin{example}\label{E:ToricExample}
Consider the surjective homomorphism of abelian groups 
$f\: \mathbb{Z}^2\rightarrow \mathbb{Z}$ given by 
$f(e_1)=2$ and $f(e_2)=3$, and let $L=\ker f$. Thus $L$ 
is the lattice in $\mathbb Z^2$ generated by $\ell=(3,-2)=3e_1-2e_2$, 
and we will identify via the map $f$ the quotient $\mathbb Z^2/L$ with 
$\mathbb Z$. Under this identification, the affine semigroup $Q$ is 
the numeric semigroup in $\mathbb Z$ generated by $2$ and $3$, and the 
corresponding toric ring $S=\Bbbk[Q]\cong\Bbbk[x,y]/(y^2-x^3)$ is  
the coordinate ring 
of the semicubical parabola.  
The induced $\mathbb Z^2/L = \mathbb Z\,$-grading on the polynomial ring 
$R=\Bbbk[x,y]$ has $x$ of degree $2$ and $y$ of degree $3$, and a 
minimal free $(\mathbb Z^2/L)$-graded resolution $\mathcal F_\bullet$ of 
$S$ is 
\[
0 \lla R \xleftarrow[d_1]{\ y^2-x^3} R \lla 0.  
\]
Thus $\mathcal F_\bullet$ has a homogeneous $R$-basis $B=\{b_0, b_1\}$ 
where $b_0$ is a degree $0$ basis element for the free module 
in homological dimension $0$, and $b_1$ is a  basis element 
of degree $6$ for the free module in homological dimension $1$.    
Let $\nu\: \mathbb Z^2/L=\mathbb Z\rightarrow \mathbb Z^2$ be  the 
splitting given by $\nu(1)= 2e_1-e_2=(2,-1)$. This makes $B$ a homogeneous 
basis of $\tau\mathcal F_\bullet$ over $R[L]$ with $\mathbb Z^2$-degrees 
for its elements $\deg(b_0)=\nu(0)=(0,0)$ and $\deg(b_1)=\nu(6)=(12,-6)$.  
Since \ $d_1(b_1)=y^2b_0-x^3b_0$, \ 
Remark~\ref{R:toric-remarks}(d) yields for every $k\in\mathbb Z$ that  
\begin{align*}
(\tau d_1)\bigl((k\ell) b_1\bigr)\ 
&= \ y^2\bigl(\lambda_{(0,2)} + k\ell\bigr)b_0 \ - 
   \ x^3\bigl(\lambda_{(3,0)} + k\ell\bigr)b_0            \\ 
&= \ y^2\bigl(\nu(6)-(0,2)  + k\ell\bigr)b_0 \ - 
   \ x^3\bigl(\nu(6)-(3,0)  + k\ell\bigr)b_0             \\ 
&= \ y^2\bigl((4+k)\ell\bigr)b_0\ - \ x^3\bigl((3+k)\ell\bigr)b_0. 
\end{align*}
Therefore $L\times B$ is a homogeneous $R$-basis with 
minimal boundary support for the $\mathbb Z^2$-graded minimal 
resolution $\tau\mathcal F_\bullet$, 
the toric incidence poset $P(\mathcal F_\bullet, B)$ supporting 
$\mathcal F_\bullet$ is 
the poset structure on the infinite set $L\times B$ with 
Hasse diagram 
\begin{center}
\begin{tikzpicture}[scale=0.9, vertices/.style={draw, fill=black,
                            circle, inner sep=1pt}]
              \node [vertices, label=below:{$(5\ell)b_0$}]            (2) 
                                                  at (4.25, -1.5){};
              \node [label=left:{$\ldots$}]                           (3) 
                                                  at (-2.5, -1.5){};
              \node [vertices, label=below:{$(2\ell)b_0$}]            (4) 
                                                  at (-.25, -1.5){};
              \node [vertices, label=below:{$(3\ell)b_0$}]            (5) 
                                                  at (1.25, -1.5){};
              \node [vertices, label=below:{$(4\ell)b_0$}]            (6) 
                                                  at (2.75, -1.5){};
              \node [vertices, label=below:{$\ell\, b_0$}]            (1) 
                                                  at (-1.75, -1.5){};
	      \node [label=right:{$\ldots$}]                          (15) 
                                                  at (2.75, 0){};
	      \node [label=left:{$\ldots$}]                           (14) 
                                                  at (-4.75, 0){};
              \node [vertices, label=above:{$\smash[b]{(2\ell)b_1}$}] (7) 
                                                  at (-.25, 1.5){};
              \node [vertices, label=above:{$\smash[b]{(-\ell)b_1}$}] (8) 
                                                  at (-4.75, 1.5){};
              \node [vertices, label=above:{$\smash[b]{b_1}$}]        (9) 
                                                  at (-3.25, 1.5){};
              \node [vertices, label=above:{$\smash[b]{\ell\, b_1}$}] (10) 
                                                  at (-1.75, 1.5){};
              \node [label=right:{$\ldots$}]                          (11) 
                                                  at (.5, 1.5){};
              \node [vertices, label=above:{$\smash[b]{(-2\ell)b_1}$}] (12) 
                                                  at (-6.25, 1.5){};

	   \foreach \to/\from in {14/1, 1/12, 12/4, 4/8, 8/5, 5/9, 
                                   9/6, 6/10, 10/2, 2/7, 7/15} 
	   \draw [-] (\to)--(\from);
	   \node [label=right:{$\ldots$}] at (5, -1.5){};
	   \node [label=left:{$\ldots$}] at (-7, 1.5){};
\end{tikzpicture}
\end{center}
and it is straightforward to see that in this particular case this 
is also a hcw-poset. 
\end{example}

\section{Artinianizations and Alexander duality}\label{S:AAD}

Here we show how to translate cellular resolution techniques from 
\cite[Sections 5.3 and 5.4]{MiSt} to the setting of 
resolutions supported on posets.

In what follows, $R=\Bbbk[x_1,\dots,x_m]$ is a polynomial ring over 
a field $\Bbbk$, with the standard $\mathbb Z^m$-grading. 
As in \cite{MiSt}, 
for $\alpha=(a_1,\dots, a_m)\in\mathbb Z^m$ we set 
$\m^\alpha=(x_1^{a_1},\dots, x_m^{a_m})$, and, when   
a monomial ideal $I$ is generated 
in degrees $\le\alpha$, 
we write $I^{[\alpha]}$ for the corresponding 
Alexander dual of the ideal $I$.   
We refer the reader to the excellent exposition in \cite{MiSt} for 
the definition and basic properties of Alexander duality.

For a chain complex 
$\mathcal G_\bullet=(G_n,g_n)$ of finite free $\mathbb Z^m$-graded $R$-modules 
we set $\mathcal G_\bullet^*=\Hom_R\bigl(\mathcal G_\bullet, \ R\bigr)$, 
in particular $G^*_{-n}=\Hom_R(G_n, R)$. 
Also, for $\gamma\in\mathbb Z^m$ we set 
$\mathcal G_\bullet(\gamma)=\mathcal G_\bullet\otimes_R R(\gamma)$, and 
we write $(\mathcal G_\bullet)_{\le\gamma}$ 
for the subcomplex obtained by choosing homogeneous bases for the 
free modules of $\mathcal G_\bullet$ and taking the free submodules generated 
by those basis elements that have degrees  $\le\gamma$. 
It is clear that this does not depend on the choice of a homogeneous 
basis.  

Our last goal in this paper is to give a new proof
for the following consequence of the
fundamental result of Miller~\cite[Theorem 4.5]{Mi}
on Alexander duality:  

\begin{theorem}\label{T:main-theorem-3} 
Let $\alpha=(a_1,\dots, a_m)\in\mathbb Z^m$ and $I$ 
be a monomial ideal generated in degrees $\le\alpha$.  
Let $\mathcal F_\bullet$ be a 
minimal free resolution of $R\big/(I+\m^{\alpha+\mbf{1}})$ over $R$. 
For any element 
$\gamma=(c_1,\dots, c_m)\in\mathbb Z^m$ let 
$\widetilde\gamma=(\tilde c_1,\dots, \tilde c_m)$ be obtained 
by setting for each $i$ that $\tilde c_i=0$ if $c_i\ge a_i+1$, 
and $\tilde c_i=c_i$ otherwise. 
Then: 
\begin{enumerate} 
\item 
$
\Ext_R^m\bigl(R\big/I+\m^{\alpha+\mbf{1}}, \ R\bigr)(-\alpha-\mbf{1})
\, \cong\,  
I^{[\alpha]}\big/ I^{[\alpha]}\cap\m^{\alpha+\mbf{1}} 
$ 
as $\mathbb Z^m$-graded $R$-modules; 

\item
The chain complex 
$\mathcal F_\bullet^*(-\alpha-\mbf{1})[-m]_{\le\alpha}$ is a 
minimal $\mathbb Z^m$-graded free resolution of $I^{[\alpha]}$ over $R$;  

\item
The $\mathbb Z^m$-graded Betti numbers 
$\beta_{m,\gamma}$ 
of \ $R\big/(I+\m^{\alpha+\mbf{1}})$ over $R$ 
in the top homological dimension $m$ are either $0$ or $1$, and 
the intersection 
\[
\bigcap_{\beta_{m,\gamma}=1}\m^{\widetilde{\gamma}}
\]
is an irredundant irreducible decomposition of $I$. 
\end{enumerate} 
\end{theorem}

\begin{example}\label{E:main-theorem-3}
To illustrate the claims of the theorem, 
consider the monomial ideal \ 
$I=(yz, xz, xy)$ \ in $R=\Bbbk[x,y,z]$, and 
take $\alpha=\mathbf{1}=(1,1,1)$. We then have \   
$\mathfrak{m}^{[\alpha+\mathbf{1}]}=(x^2,y^2,z^2)$, \ and \ 
$I+\mathfrak{m}^{[\alpha+\mathbf{1}]}=(x^2, xy, y^2, xz, yz, z^2)$, \ 
and  we note that \ $I=I^{[\mathbf{1}]}$, \ i.e. $I$  
is self-dual. 
A minimal free resolution $\mathcal F_\bullet$ 
for \ $R/(I + \mathfrak{m}^{[\alpha+\mathbf{1}]})$, \   
as produced by Macaulay2~\cite{M2}, is  
{\small
\begin{equation*}
   \begin{CD} 
    {0\la R} 
        @< {
     \begin{bsmallmatrix*}[r]
     x^2 &    xy&     y^2&     xz&   yz&    z^2 
     \end{bsmallmatrix*}
           } < d_1 < 
    {R^{6}} 
	@< {
     \begin{bsmallmatrix*}[r]
     {-y}&    0&      {-z}&      0&      0&      0&      0&      0\\
      x&      {-y}&      0&      {-z}&      0&      0&      0&      0\\
      0&      x&      0&      0&      0&      {-z}&      0&      0\\
      0&      0&      x&      y&      {-y}&      0&      {-z}&      0\\
      0&      0&      0&      0&      x&      y&      0&      {-z}\\
      0&      0&      0&      0&      0&      0&      x&      y\\
     \end{bsmallmatrix*}
     	} <d_2< 
    {R^{8}} 
	@< 
	{
	\begin{bsmallmatrix*}[r]
	z&      0&      0\\
      0&      z&      0\\
      {-y}&      0&      0\\
      x&      {-y}&      0\\
      0&      {-y}&      z\\
      0&      x&      0\\
      0&      0&      {-y}\\
      0&      0&      x\\                       
      \end{bsmallmatrix*}
     } <d_3< 
    {R^3\la 0.}
   \end{CD}
\end{equation*}
}
\negthinspace
\negthinspace
The three basis elements in the top homological dimension  
$m=3$ have degrees  
\[
\gamma_1 =(2,1,1), \quad 
\gamma_2 =(1,2,1), \quad 
\gamma_3 =(1,1,2).
\]
Thus: 
\begin{itemize}
\item[(1)] 
Dualizing we see that \ 
$\Ext_R^3\bigl(R\big/I+\m^{\alpha+\mbf{1}}, \ R\bigr)$ \ has 
minimal generators \ $e_1, e_2, e_3$ \ over $R$ of degrees 
$(-2,-1,-1)$, $(-1,-2,-1)$, and $(-1,-1,-2)$, respectively, 
subject to the relations 
\begin{gather*}
ye_1=ze_1=xe_2=ze_2=xe_3=ye_3=0, \quad\text{ and }\quad \\   
xe_1=ye_2=ze_3.
\end{gather*} 
After a degree twist by $-\alpha-\mathbf{1}=(-2,-2,-2)$,  
these become precisely  the degrees and the relations that the 
minimal generators \ $yz, xz, xy$ \ of \ $I^{[\mathbf{1}]}=I$ \ 
satisfy  modulo \ 
%
$
I^{[\mathbf{1}]}\cap\mathfrak{m}^{[\alpha+\mathbf{1}]} = 
(x^2y, xy^2, xz^2, y^2z, yz^2, xz^2). 
$

\item[(2)] 
The chain complex \ 
$
\mathcal F_\bullet^*(-\alpha-\mbf{1})[-m]_{\le\alpha} \ = \  
\mathcal F_\bullet^*(-2,-2,-2)[-3]_{\le(1,1,1)}
$ 
\ is  
\begin{equation*}
   \begin{CD} 
   {0\lra 0} 
        @> 0 > d_1^*[-3] >  
   { 0 } 
	@> 0 > d_2^*[-3] > 
    {R^{2}} 
	@>  
	{
	\begin{bsmallmatrix*}[r]
       x&      0  \\
    {-y}&   {-y}  \\
       0&      z                            
        \end{bsmallmatrix*}
        } > d_3^*[-3] > 
    {R^3\lra 0,}
   \end{CD}
\end{equation*}
and this is clearly a minimal free resolution of \ $I^{[\mathbf{1}]}=I$. 

\item[(3)] 
The only nonzero multigraded Betti numbers in dimension $3$ are  
\[
\beta_{3,\gamma_1}=\beta_{3,\gamma_2}=\beta_{3,\gamma_3}=1.  
\]
Applying the tilde operation to the corresponding multidegrees we 
obtain  
\[
\tilde{\gamma}_1=(0,1,1), \ \tilde{\gamma}_2=(1,0,1), \  
\tilde{\gamma}_3=(1,1,0),
\]
and we see that  
\[
\mathfrak{m}^{\tilde{\gamma}_1}\cap
\mathfrak{m}^{\tilde{\gamma}_2}\cap
\mathfrak{m}^{\tilde{\gamma}_3}
=(y,z)\cap(x,z)\cap(x,y)
\] 
is precisely the irredudant, irreducible decomposition of $I$. 
\end{itemize}
\end{example} 

We postpone our proof of Theorem~\ref{T:main-theorem-3} 
till the end of the section. The key 
new ingredient there is the existence of a hcw-poset that 
supports $\mathcal F_\bullet$. The rest of the proof uses notions 
and facts about cellular resolutions translated essentially verbatim 
to the setting of resolutions supported on posets, and our next 
task is to state these. 

For the rest of this paper 
{\bf $P$ always denotes an hcw-poset.}
  
We begin by introducing the 
conic chain complex of a pair, the notion of a 
co-labeled poset, and the relevant homogenizations. 
The reader should compare these with 
\cite[Definitions 5.30, 5.31, and 5.33]{MiSt}. 

\begin{definition}
Let $P$ be a poset structure on a finite set $B$, 
let $L$ be a lower set in $P$, let 
$\deg\: B\lra \mathbb Z^m$ be a function, let $\Bbbk$ be a field, and 
let $\mathcal F_\bullet$ be a finite free $\mathbb Z^m$-graded 
chain complex over the polynomial ring $R=\Bbbk[x_1,\dots,x_m]$.

(a) 
The \emph{conic chain complex} of the pair $(P,L)$ over $\Bbbk$ is 
the quotient chain complex 
\[
\mathcal C_{\bullet}(P,L,\Bbbk)=
\mathcal C_{\bullet}(P,\Bbbk)/\mathcal C_{\bullet}(L,\Bbbk).   
\]
Thus  
$
\mathcal C_n(P,L,\Bbbk)=
\bigoplus_{a\in P\setminus L,\ d(a)=n}\HH_n(\Delta_{\le a},\Delta_{<a},\Bbbk) 
$ 
in homological degree $n$, \linebreak hence the  $P$-grading on 
$\mathcal C_\bullet(P,\Bbbk)$ induces a natural $P$-grading on 
$\mathcal C_\bullet(P,L,\Bbbk)$. 

(b) 
We write ${\widetilde{\mathcal C}}^*_\bullet(P,\Bbbk)$ and 
$\mathcal C^*_\bullet(P,L,\Bbbk)$ 
for the shifted dual chain complexes   
\begin{align*}
{\widetilde{\mathcal C}}^*_\bullet(P,\Bbbk) &= 
\Hom_\Bbbk\bigl(\widetilde{\mathcal C}_\bullet(P,\Bbbk),\ \Bbbk\bigr)[d] \\ 
\mathcal C^{*}_\bullet(P,L,\Bbbk) &=
\Hom_\Bbbk\bigl(\mathcal C_{\bullet}(P,L,\Bbbk),\ \Bbbk\bigr)[d],  
\end{align*}
where $d=\dim P$. Thus 
$
\mathcal C_n^*(P,L,\Bbbk)=
\Hom_\Bbbk\bigl(\mathcal C_{d-n}(P,L,\Bbbk),\ \Bbbk\bigr)$ 
in homological degree $n$, hence the $P$-grading on 
$\mathcal C_\bullet(P,L,\Bbbk)$ induces a natural 
$P^{op}$-grading on $\mathcal C_\bullet^*(P,L,\Bbbk)$. 
Similarly, 
$
\widetilde{\mathcal C}_n^*(P, \Bbbk)=
\Hom_\Bbbk\bigl(\widetilde{\mathcal C}_{d-n}(P, \Bbbk),\ \Bbbk\bigr)$ 
in homological degree $n$, and the $\check P$-grading on 
$\widetilde{\mathcal C}_\bullet(P, \Bbbk)$ induces a natural 
$\widehat{P^{op}}$-grading on $\widetilde{\mathcal C}_\bullet^*(P, \Bbbk)$. 


(c) Let $X=(P,\deg)$, and suppose that $(P^{op},\deg)$ is a labeled poset. 
We say that $X$ is a \emph{co-labeled poset}. 
We say that the chain complex $\mathcal F_\bullet$ is 
\emph{co-supported on $Y=(P,L,\deg)$} if it is isomorphic to the 
homogenization $\mathcal C_\bullet^*(Y,\Bbbk)$ of the 
$P^{op}$-graded chain complex 
$\mathcal C_\bullet^*(P,L,\Bbbk)$ with respect to the labeled poset 
$(P^{op},\deg)$. 
We also write $\widetilde{\mathcal C}^*_\bullet(X, \Bbbk)$ for the 
homogenization of the $\widehat{P^{op}}$-graded chain complex  
$\widetilde{\mathcal C}^*_\bullet(P, \Bbbk)$ with respect to the labeled 
poset $\bigl(\widehat{P^{op}}, \widehat{\deg}\bigr)$. 
Here $\widehat{\deg}$ is the labeling on $\widehat{P^{op}}$ that agrees 
with $\deg$ on $P^{op}$, and has $\widehat{\deg}(\hat 1)$ as the 
coordinatewise maximum in $\mathbb Z^m$ of the elements in the set 
$\{\deg(a)\mid a\in P^{op}\}$. 
%
\end{definition}

We recall that if a labeled hcw-poset $P$ supports a free resolution 
$\mathcal F_\bullet$ then each element of $P$ corresponds to a (unique 
up to associates) basis element of $\mathcal C_\bullet(P,\Bbbk)$ and 
hence to a (unique up to associates) basis element of $\mathcal F_\bullet$; 
much in the same way as the cells of a regular CW-complex $X$ provide a 
basis for a cellular resolution supported on $X$. This simple observation 
is the main (and often the only) fact that is needed to convert a statement 
about cellular resolutions into a statement about resolutions supported 
on hcw-posets, and it is what we are referring to when we use the phrase 
\emph{mutatis mutandis} in the sequel. In the next six results we 
demonstrate this conversion process explicitly by reformulating 
cellular resolution statements and their proofs from \cite{MiSt}. 

The reader should compare the following lemma and its proof with 
\cite[Lemma 5.35 and its proof]{MiSt}.  

\begin{lemma}\label{modules-to-ideals}
Let $\alpha\in\mathbb Z^m$,  
let $I$ be a monomial ideal generated in degrees 
$\le \alpha$, let $X=(P,\deg)$ be a co-labeled poset, let 
$L$ be a lower set in $P$, and let $Y=(P,L,\deg)$. 
\begin{enumerate} 
\item 
If the complex \ $\widetilde{\mathcal C}^*_\bullet(X,\Bbbk)$ 
is a minimal free resolution 
of $I\big/ I\cap\m^{\alpha+\mbf{1}}$, then 
$Z=(P,\ P_{\deg\not\le\alpha},\ \deg)$ co-supports 
a minimal free resolution of $I$.

\item 
If the complex \ 
$\mathcal C^*_\bullet(Y,\Bbbk)$ is a (minimal) free resolution 
of $I\big/ I\cap\m^{\alpha+\mbf{1}}$, then 
$W=(P,\ L\cup P_{\deg\not\le\alpha},\ \deg)$ 
co-supports a (minimal) free resolution of $I$.
\end{enumerate}    
\end{lemma}

\begin{proof} 
(1)  \ 
The exact sequence of $\mathbb Z^m$-graded 
Cohen-Macaulay $R$-modules 
\[
0\lra I\big/I\cap\m^{\alpha+\mbf{1}} \lra 
R\big/\m^{\alpha+\mbf{1}} \lra R\big/(I+\m^{\alpha+\mbf{1}}) \lra 0
\]
yields that 
\[
R/\m^{\alpha+\mbf{1}}(\alpha+\mbf{1})\cong 
\Ext^m_R\bigl(R/\m^{\alpha+\mbf{1}}, R\bigr) \ra  
\Ext^m_R\bigl(I\big/I\cap\m^{\alpha+\mbf{1}}, R\bigr) \ra 0
\] 
is exact, hence the single basis element in homological degree $m$ of 
$\widetilde{\mathcal C}^*_\bullet(X,\Bbbk)$ has $\mathbb Z^m$-degree 
$\alpha+\mbf{1}$.   
%
Therefore the degree 
$\beta\in\mathbb Z^m$ graded strand of 
$\mathcal C^*_\bullet(Z, \Bbbk)$ is precisely 
the degree $\alpha\wedge\beta$ graded strand of 
$\widetilde{\mathcal C}^*_\bullet(X,\Bbbk)$, in particular it is acyclic. Thus 
$\mathcal C^*_\bullet(Z, \Bbbk)$ is a minimal free resolution. Comparing 
generators and relations as in \cite[Proof of Lemma~5.35]{MiSt} we see 
that it is a resolution of $I$. 

(2) \ 
Note that the degree 
$\beta\in\mathbb Z^m$ graded strand of 
$\mathcal C^*_\bullet(W, \Bbbk)$ is precisely 
the degree $\alpha\wedge\beta$ graded strand of 
$\mathcal C^*_\bullet(Y,\Bbbk)$ and proceed as in the proof of part (1). 
\end{proof}

An \emph{acyclic cover over $\Bbbk$} of a poset $P$ is a collection 
of lower sets $\mathfrak L=\{L_1,\dots, L_n\}$ such that 
$P=L_1\cup\dots\cup L_n$ and 
for any subset $U\subseteq\{1,\dots, n\}$ either the intersection 
$L_U=\bigcap_{i\in U}L_i$ is empty or 
$\smash[t]{\widetilde{\mathcal C}_\bullet(L_U,\Bbbk)}$ is exact.   
The \emph{nerve} of  $\mathfrak L$ is the abstract simplicial complex 
$N(\mathfrak L)$ on the set of vertices $\{1,\dots, n\}$ whose  
nonempty faces are the subsets $U\subseteq\{1,\dots, n\}$ such that $L_U$ is 
not empty. 

The reader should compare the next lemma with 
\cite[Lemma 5.36]{MiSt}.

\begin{lemma}[Nerve Lemma] 
If $\mathfrak L$ is an acyclic cover over $\Bbbk$ of a poset $P$ 
then 
$
\HH_i\widetilde{\mathcal C}_\bullet(P, \Bbbk)\cong 
\RH_i\bigl(N(\mathfrak L), \Bbbk\bigr)
$
for every $i$. 
\end{lemma}

\begin{proof}
For $k=0,\dots, n-1$ set 
\[
C_{\bullet,k}=\bigoplus_{\substack{|U|=k+1 \\ U\subset\{1,\dots,n\}}}
\mathcal C_\bullet(L_U,\Bbbk). 
\] 
For $1\le k\le n-1$ let 
\[
\phi_k\: C_{\bullet,k}\lra C_{\bullet,k-1}
\] 
be the morphism of complexes whose component mapping 
$\mathcal C_\bullet(L_U,\Bbbk)$ to $\mathcal C_\bullet(L_V,\Bbbk)$ is 
$0$ if $V\not\subset U$ and is $\sign\sigma$ times the natural inclusion 
$\mathcal C_\bullet(L_U,\Bbbk)\subseteq\mathcal C_\bullet(L_V,\Bbbk)$ otherwise. 
(Here we identify each subset of $\{1,\dots,n\}$ with the sequence of its  
elements arranged in increasing order, and $\sigma$ is the permutation 
that arranges the elements of 
the sequence $(U\setminus V,\ V)$ in increasing order.)  
It is straightforward to check that we obtain a chain complex of 
acyclic chain complexes 
\begin{equation}\label{E:double-complex}
0\lra C_{\bullet,n-1} 
\xrightarrow{\phi_{n-1}}\dots 
\xrightarrow{\phi_{k+1}}  
C_{\bullet,k} \xrightarrow{\phi_k} C_{\bullet, k-1}
\xrightarrow{\phi_{k-1}} \dots 
\xrightarrow{\phi_1}  
C_{\bullet, 0}\lra 0.  
\end{equation}
Since for any lower sets $K$ and $M$ in $P$ we have 
\begin{gather*}
\mathcal C_\bullet(K\cap M,\Bbbk)=
\mathcal C_\bullet(K,\Bbbk) \cap \mathcal C_\bullet(M,\Bbbk), \\
\mathcal C_\bullet(K\cup M,\Bbbk)=
\mathcal C_\bullet(K,\Bbbk) + \mathcal C_\bullet(M,\Bbbk)
\end{gather*}
as subcomplexes of $\mathcal C_\bullet(P,\Bbbk)$, we see  that
$\coker\phi_1\cong\mathcal C_\bullet(P,\Bbbk)$. Furthermore, 
it is immediate from the definitions that in each 
homological degree $d$ the induced chain complex 
\begin{equation*}
0\lra C_{d,n-1} 
\xrightarrow{\phi_{n-1}}\dots 
\xrightarrow{\phi_{k+1}}  
C_{d,k} \xrightarrow{\phi_k} C_{d, k-1}
\xrightarrow{\phi_{k-1}} \dots 
\xrightarrow{\phi_1}  
C_{d, 0}\lra 0.  
\end{equation*}
is isomorphic to 
\[
\bigoplus_{\substack{a\in P \\ d(a)=d}}
\HH_d(\Delta_{\le a},\Delta_{<a},\Bbbk)\otimes C_\bullet(\Delta_{U_a},\Bbbk), 
\] 
where $U_a=\{i\mid a\in L_i\}$ and $\Delta_{U_a}$ is the full 
simplex on the set of vertices $U_a$. It follows that the chain complex 
\eqref{E:double-complex} is acyclic, hence a resolution of 
$\mathcal C_\bullet(P,\Bbbk)$. Now the claim of the Nerve Lemma 
follows from comparing the spectral sequences of the 
two canonical filtrations of the 
total complex associated with \eqref{E:double-complex}:   
the first spectral sequence collapses on the second page to the 
homology of $\mathcal C_\bullet(P,\Bbbk)$, while the second spectral sequence,
due to the acyclicity of the cover $\mathfrak L$,  
collapses on the second page to the homology of the simplicial 
chain complex of $N(\mathfrak L)$ over $\Bbbk$. 
\end{proof} 

Compare the following proposition and its proof with 
\cite[Proposition 5.37 and its proof]{MiSt}.

\begin{proposition}\label{P:resolving-duals}
Let $I$ be a monomial ideal generated in degrees 
$\le\alpha\in\mathbb Z^m$. Let $X=(P, \deg)$ be a labeled poset that 
supports a minimal free resolution
%
of  $I+\m^{\alpha+\mbf{1}}$. 

Then $\alpha+\mbf{1}-X=(P,\ \alpha+\mbf{1}-\deg)$  
is a co-labeled poset, 
$\widetilde{\mathcal C}^*_\bullet(\alpha+\mbf{1}-X,\ \Bbbk)$ is a minimal 
free resolution of\/  
$I^{[\alpha]} \big/ I^{[\alpha]} \cap \m^{\alpha+\mbf{1}}$, and 
$(P,\ P_{\deg\not\ge\mbf{1}},\ \alpha+\mbf{1}-\deg)$ co-supports a 
minimal free resolution of\/ $I^{[\alpha]}$. 
\end{proposition}

\begin{proof} 
By Lemma~\ref{modules-to-ideals} it is enough to show that 
$\widetilde{\mathcal C}^*_\bullet(\alpha+\mbf{1}-X, \Bbbk)$ is 
a minimal free resolution of 
$I^{[\alpha]} \big/ I^{[\alpha]} \cap \m^{\alpha+\mbf{1}}$. 
Since $I+\m^{\alpha+\mbf{1}}$ is Cohen-Macaulay of codimension $m$, we have
according to our definitions that 
\[
\widetilde{\mathcal C}^*_\bullet(\alpha+\mbf{1}-X, \Bbbk)=
\Hom_R\bigl(
\widetilde{\mathcal C}_\bullet(X, \Bbbk),R\bigr)[m-1](-\alpha-\mbf{1}), 
\]
thus it 
is a minimal 
free resolution of 
$M=\Ext_R^m\bigl(R\big/I+\m^{\alpha+\mbf{1}},\ R\bigr)(-\alpha-\mbf{1})$. 
Furthermore, the exact sequence of Cohen-Macaulay modules 
\[
0\lra I+\m^{\alpha+\mbf{1}}\big/\m^{\alpha+\mbf{1}} \lra 
R\big/\m^{\alpha+\mbf{1}} \lra R\big/I+\m^{\alpha+\mbf{1}} \lra 0
\]
yields that 
$
0\lra M \lra \Ext^m_R\bigl(R/\m^{\alpha+\mbf{1}}, R\bigr)(-\alpha-\mbf{1})= 
R/\m^{\alpha+\mbf{1}}$ 
is exact, hence it is enough to show that the degree $\beta$ component of 
$M$ agrees with the degree $\beta$ component of 
$I^{[\alpha]} \big/ I^{[\alpha]} \cap \m^{\alpha+\mbf{1}}$ for every 
$\beta\le\alpha$. 

Note that the 
degree $\beta\in\mathbb Z^m$ graded 
strand of 
$\widetilde{\mathcal C}_\bullet^*(\alpha+\mbf{1}-X,\Bbbk)$ is 
canonically isomorphic to  
$\mathcal C_\bullet^*(P, P_{\deg\not\ge\alpha+\mbf{1}-\beta}, \Bbbk)$ 
whenever $\beta\le\alpha$. 
Since $\mathcal{\widetilde C}_\bullet(P,\Bbbk)$ is exact, we also have that 
\[
\HH_i
\mathcal C_\bullet^*(P, P_{\deg\not\ge\alpha+\mbf{1}-\beta}, \Bbbk)  
\cong 
\HH_{i+1-e}
\mathcal{\widetilde C}_\bullet^*(P_{\deg\not\ge\alpha+\mbf{1}-\beta}, \Bbbk)
\]  
where $e=\dim P - \dim P_{\deg\not\ge\alpha+\mbf{1}-\beta}=(m-1) - d_{\beta}$. 
Since for any $n$ we have 
$
\HH_n\mathcal{\widetilde C}_\bullet^*(P_{\deg\not\ge\alpha+\mbf{1}-\beta},\Bbbk) 
\cong 
\HH_{d_\beta-n}\mathcal{\widetilde C}_\bullet(P_{\deg\not\ge\alpha +\mbf{1}-\beta},\Bbbk),
$
we obtain that 
\[
\HH_i
\mathcal C_\bullet^*(P, P_{\deg\not\ge\alpha+\mbf{1}-\beta}, \Bbbk)  
\cong 
\HH_{m-2-i}
\mathcal{\widetilde C}_\bullet(P_{\deg\not\ge\alpha+\mbf{1}-\beta}, \Bbbk). 
\]  
Let $S=\{1,\dots, m\}$. For each $i\in S$ 
let $L_i=\{a\in P\mid (\deg a)_i< \alpha_i +1 -\beta_i\}$. Thus 
$L_i=P_{\deg\le \alpha+\mbf{1}-(\beta_i+1)e_i}$ where $e_i$ is the $i$th standard basis 
element of $\mathbb Z^m$. In particular $L_i$ is a lower set  in $P$, 
and  $P_{\deg\not\ge\alpha+\mbf{1}-\beta}=\bigcup_{i=1}^m L_i$. 
Furthermore,  for any $U\subseteq S$ we obtain the lower set 
$L_U=\bigcap_{i\in U} L_i = P_{\deg\le\alpha +\mbf{1} -\sum_{i\in U}(\beta_i+1)e_i}$, 
hence  
\smash[t]{$\widetilde{\mathcal C}_\bullet(L_U,\Bbbk)$} is exact by 
Proposition~\ref{acyclicity-criterion} whenever $L_U$ is not empty. 
Therefore the collection $\mathfrak L=\{L_1,\dots, L_m\}$
is an acyclic cover over $\Bbbk$ of $P_{\deg\not\ge\alpha+\mbf{1}-\beta}$. 

Now we analyze the nerve of $\mathfrak L$. 
For each 
$j\in S$ the lower set $L_{S\setminus j}$ is not empty as $P$ 
contains an element of degree $(\alpha_j+1)e_j$; therefore 
$N(\mathfrak L)$ 
is either the full simplex on $\{1,\dots, m\}$ or its boundary. It follows 
that 
$
\HH_i 
\mathcal C_\bullet^*(P, P_{\deg\not\ge\alpha+\mbf{1}-\beta}, \Bbbk)  
$  
is possibly nonzero only when $N(\mathfrak L)$ is an $(m-2)$-dimensional 
sphere. In that case  
\[
\HH_i
\mathcal C_\bullet^*(P, P_{\deg\not\ge\alpha+\mbf{1}-\beta}, \Bbbk)  
\cong 
\begin{cases}
\Bbbk &\text{ if } i= 0, \\ 
0     &\text{ otherwise,} 
\end{cases}
\]
by the Nerve Lemma. Note also that this occurs exactly when $P$ does 
not have any elements of degree $\le\alpha-\beta$, which (since $X$ 
supports the \emph{minimal} free resolution of $I+\m^{\alpha+\mbf{1}}$) is 
equivalent to $x^{\alpha-\beta}\notin I$, which is equivalent to 
$x^\beta\in I^{[\alpha]}$ by \cite[Proposition 5.23]{MiSt}. Therefore 
the degree $\beta$ homogeneous component of $M$ 
agrees with the degree $\beta$ homogeneous component of 
$
I^{[\alpha]}+\m^{\alpha+\mbf{1}}\big/\m^{\alpha+\mbf{1}}\cong 
I^{[\alpha]}\big/ I^{[\alpha]}\cap\m^{\alpha+\mbf{1}} 
$
whenever $\beta\le\alpha$. 
This completes the proof of the proposition. 
\end{proof}

\begin{lemma}
If a labeled poset $(P, \deg)$ supports a minimal free resolution of a 
Cohen-Macaulay monomial ideal $I$ of codimension $g$, then all 
maximal elements of $P$ have dimension $g-1$.   
\end{lemma}

\begin{proof} 
The proof is \emph{mutatis mutandis} that of 
\cite[Corollary~5.39]{MiSt}. 
\end{proof}

\begin{lemma} 
Let $a$ be an element of a labeled poset $(P, \deg)$ supporting 
a minimal resolution of a monomial ideal $I$ of $R$ with $R/I$ 
Artinian. If the $i$'th coordinate $\deg(a)_i$ of\/ $\deg(a)$ is nonzero, 
then $\deg(a)_i=\deg(a')_i$ for some element $a'\ge a$ of $P$ that 
is maximal among the elements of\/ $P$ whose degrees have the same 
support as $\deg(a)$. Any such element\/ $a'$ has dimension  
$d(a')=\bigl|\supp\bigl(\deg(a)\bigr)\bigr| - 1$.   
\end{lemma}

\begin{proof} 
The proof is \emph{mutatis mutandis} that of 
\cite[Proposition~5.40]{MiSt}. 
\end{proof}

\begin{proposition}\label{P:irredundant-decomposition} 
Let $\alpha=(a_1,\dots, a_m)$ and consider a monomial 
ideal $I$ that is generated in degrees 
$\le\alpha$.  Let $X=(P,\deg)$ be a labeled poset 
supporting a minimal free resolution of $I+\m^{\alpha+\mbf{1}}$. 
Let $D$ be the set of 
maximal elements of $P$.  

Then no two elements of $D$ have the same \ $\widetilde{\deg}$, 
and the intersection \ 
\[
\bigcap_{a\in D}\m^{\widetilde{\deg(a)}}
\] 
is an irredundant irreducible decomposition of the monomial ideal $I$.    
\end{proposition}

\begin{proof}
The proof is \emph{mutatis mutandis} that of 
\cite[Theorem~5.42]{MiSt}
\end{proof}


We hope by now the reader is convinced that virtually every 
statement about cellular resolutions can be essentially verbatim 
translated into a statement about resolutions supported on hcw-posets. 
At this point we are ready to leverage the fact that every minimal free resolution is supported on an hcw-poset. 

\begin{proof}[Proof of Theorem \ref{T:main-theorem-3}] 
By Theorem~\ref{T:cl-supports-I} there exists a labeled poset $X=(P,\deg)$,  
with $P$ an hcw-poset, that supports a minimal free resolution 
of $I+\m^{\alpha+\mbf{1}}$. Therefore we have 
\[
\widetilde{\mathcal C}^*_\bullet\bigl(\alpha+\mbf{1}-X, \ \Bbbk\bigr) 
\cong 
\mathcal F_\bullet^*(-\alpha-\mbf{1})[-m]. 
\] 
Now (1) and (2) follow from Proposition~\ref{P:resolving-duals},  and (3) 
follows from Proposition~\ref{P:irredundant-decomposition} and 
the fact that $P$ is an hcw-poset. 
\end{proof}


We conclude by remarking that in the proof of 
Theorem~\ref{T:main-theorem-3} we did not use any additional properties 
of the hcw-poset $P$ other than its existence. Thus, even though in 
general the computation of an hcw-poset supporting a given free resolution 
may be complicated, the mere fact that such a poset exists already has  
significant structural implications.


\begin{thebibliography}{99} 



\bibitem{BrHe} 
W.~Bruns and J.~Herzog.
\newblock {\em Cohen-{M}acaulay rings}.
\newblock Number~39 in Cambridge Studies in Advanced Mathematics. 
Cambridge University Press, 1993.

\bibitem{BaWe} 
E.~Batzies and V.~Welker.
\newblock Discrete {M}orse theory for cellular resolutions.
\newblock {\em J. Reine Angew. Math.}, 543:147--168, 2002.

\bibitem{BaPoSt} 
D.~Bayer, S.~Popescu, and B.~Sturmfels. 
\newblock Syzygies of unimodular Lawrence ideals.  
\newblock {\em J. Reine Angew. Math.}, 534:169--186, 2001. 

\bibitem{BaSt} 
D.~Bayer and B.~Sturmfels.
\newblock Cellular resolutions.
\newblock {\em J. Reine Agnew. Math.}, 502:123--140, 1998.

\bibitem{BaPeSt}
D.~Bayer, I.~Peeva, and B.~Sturmfels.
\newblock Monomial resolutions.
\newblock {\em Math. Res. Lett.}, 5-6:31--46, 1995.

\bibitem{Bjo} 
A.~Bj{\"o}rner.
\newblock Posets, regular {CW}-complexes and {B}ruhat order.
\newblock {\em European J. Combin.}, 5:7--16, 1984.

\bibitem{BlYu}
F.~Block and J.~Yu.
\newblock Tropical convexity via cellular resolutions.
\newblock {\em J. Algebraic Combin.}, 24(1):103--114, 2006.

\bibitem{BrBrKl} 
B.~Brown, J.~Browder, and S.~Klee. 
\newblock Cellular resolutions of ideals defined by nondegenerate simplicial 
homomorphisms. 
\newblock {\em  Israel J. Math.} 196(1):321--344, 2013. 

\bibitem{ChTch} 
H.~Charalambous and A.~Tchernev.
\newblock Free resolutions for multigraded modules: A generalization of
  {T}aylor's construction.
\newblock {\em Math. Res. Lett.}, 10:535--550, 2003.

\bibitem{ChThoma1} 
H.~Charalambous and A.~Thoma. 
\newblock On the generalized {S}carf complex of lattice ideals. 
\newblock {\em J. Algebra}, 323(5):1197--1211, 2010.

\bibitem{ChThoma2} 
H.~Charalambous and A.~Thoma. 
\newblock On simple {A}-multigraded minimal resolutions. 
\newblock {\em Combinatorial aspects of commutative algebra}, 
33--44, Contemp. Math., 502, Amer. Math. Soc., Providence, RI,  2009. 

\bibitem{Cl} 
T.B.P.~Clark.
\newblock Poset resolutions and lattice-linear monomial ideals.
\newblock {\em J. Algebra}, 323:899--919, 2010.

\bibitem{Cl2} 
T.B.P.~Clark.
\newblock A minimal poset resolution of stable ideals.
\newblock In C.~Francisco, L.~C. Klingler, S.~Sather-Wagstaff, and J.~C.
  Vassilev, editors, {\em Progress in Commutative Algebra 1: Combinatorics and
  Homology}, pages 143--166. De {G}ruyter, 2012.

\bibitem{ClMa} 
T.B.P.~Clark and S.~Mapes.
\newblock Rigid monomial ideals.
\newblock {\em J. Commut. Algebra}, 6(1):33--52, 2014.

\bibitem{ClMa2} 
T.B.P.~Clark and S.~Mapes.
\newblock The {B}etti poset in monomial resolutions.
\newblock Preprint available at {\tt http://arxiv.org/abs/1407.5702}.

\bibitem{ClTch} 
T.B.P.~Clark and A.~Tchernev. 
\newblock Regular {CW}-complexes and poset resolutions of monomial ideals. 
\newblock {\em Comm. Algebra}, 44(6):2707--2718, 2016. 

\bibitem{CoNa1}
A.~Corso and U.~Nagel. 
\newblock Specializations of Ferrers ideals, 
\newblock {\em J. Algebraic Combinatorics}, 28:425--437, 2008. 

\bibitem{CoNa2}
A.~Corso and U.~Nagel. 
\newblock Monomial and toric ideals associated with Ferrers graphs, 
\newblock {\em Trans. Amer. Math. Soc.}, 361:1371--1395, 2009. 

\bibitem{DeYu}
M.~Develin and J.~Yu.
\newblock Tropical polytopes and cellular resolutions.
\newblock {\em Experiment. Math.}, 16(3):277--291, 2007.

\bibitem{DoEn}
A.~Dochtermann and A.~Engstr{\"o}m. 
\newblock Cellular resolutions of cointerval ideals.  
\newblock {\em Math. Z.}, 270(1-2): 145--163, 2012.  

  
\bibitem{DoJoSa}
A.~Dochtermann, M.~Joswig, and R.~Sanyal. 
\newblock Tropical types and associated cellular resolutions.
\newblock {\em J. Algebra}, 356:304--324, 2012.

\bibitem{DoMo} 
A.~Dochtermannn and F.~Mohammadi. 
\newblock Cellular resolutions from mapping cones.  
\newblock {\em J. Combin. Theory Ser. A}, 128:180--206, 2014. 

\bibitem{DoSa} 
A.~Dochtermann and R.~Sanyal. 
\newblock Laplacian ideals, arrangements, and resolutions. 
\newblock {\em J. Algebraic Combin.}, 40(3):805--822, 2014. 

\bibitem{Flo1}
G.~Fl{\o}ystad. 
\newblock Cellular resolutions of {C}ohen-{M}acaulay monomial ideals.
\newblock {\em J. Commut. Algebra}, 1(1):57--89, 2009.


\bibitem{GaPeWe} 
V.~Gasharov, I.~Peeva, and V.~Welker.
\newblock The lcm-lattice in monomial resolutions.
\newblock {\em Math. Res. Lett.}, 5-6:521--532, 1999.

\bibitem{GeHaMiNa}
A.V.~Geramita, B.~Harbourne, J.~Migliore, and U.~Nagel. 
Matroid configurations and symbolic powers of their ideals. 
\newblock {\em Trans. Amer. Math. Soc.}, 369:7049-7066, 2017.

\bibitem{M2} 
D.~R. Grayson and M.~E. Stillman.
\newblock Macaulay2, a software system for research in algebraic geometry.
\newblock Available at {\tt http://www.math.uiuc.edu/Macaulay2/}.

\bibitem{HeHi} 
J.~Herzog and T.~Hibi.
\newblock {\em Monomial ideals}, volume 260 of {\em Graduate Texts in
  Mathematics}.
\newblock Springer-Verlag London, Ltd., London, 2011.

\bibitem{JoWe} 
M.~J{\"o}llenbeck and V.~Welker.
\newblock Minimal resolutions via algebraic discrete {M}orse theory. 
  volume 197 of {\em Mem. Amer. Math. Soc.}
\newblock 2009.

\bibitem{JoMi}
S.-Y.~Jow and E.~Miller. 
\newblock Multiplier ideals of sums via cellular resolutions. 
\newblock {\em Math. Res. Lett.}, 15(2):359--373, 2008. 

\bibitem{KuKu} 
A.~Kumar and C.~Kumar. 
\newblock Multigraded Beti numbers of multipermutohedron ideals. 
\newblock {\em J. Ramanujan Math. Soc.}, 28(1):1--18, 2013. 


\bibitem{Me} 
J.~Mermin.
\newblock The {E}liahou-{K}ervaire resolution is cellular.
\newblock {\em J. Commut. Algebra}, 2(1):55--78, 2010.


\bibitem{Mi}
E.~Miller,
\newblock The {A}lexander duality functors and local duality
          with monomial support.
\newblock {\em J. Algebra}, 231:180--234, 2000.


\bibitem{MiSt}
E.~Miller and B.~Sturmfels.
\newblock {\em Combinatorial {C}ommutative {A}lgebra}.
\newblock Number 227 in Graduate Texts in Mathematics. Springer-Verlag, 2004.

\bibitem{NaRe}
U.~Nagel and V.~Reiner.
\newblock Betti numbers of monomial ideals and shifted skew shapes.
\newblock {\em Electron. J. Combin.}, 16(2, Special volume in honor of Anders
  Bj\"orner):Research Paper 3, 59, 2009.

\bibitem{No}
I.~Novik.
\newblock Lyubeznik's resolution and rooted complexes.
\newblock {\em J. Algebraic Combin.}, 16(1):97--101, 2002.

\bibitem{OkYa}
R.~Okazaki and K.~Yanagawa.
\newblock On {CW} complexes supporting {E}liahou-{K}ervaire type resolutions of
  {B}orel fixed ideals.
\newblock {\em Collect. Math.}, 66(1):125--147, 2015.

\bibitem{OrWe}
P.~Orlik and V.~Welker.
\newblock {\em Algebraic combinatorics}.
\newblock Universitext. Springer, Berlin, 2007.
\newblock Lectures from the Summer School held in Nordfjordeid, June 2003.

\bibitem{PeVe} 
I.~Peeva and M.~Velasco.
\newblock Frames and degenerations of monomial ideals.
\newblock {\em Trans. of the Amer. Math. Soc.}, 363(4):2029--2046, April
  2011.

\bibitem{ReWe}
V.~Reiner and V.~Welker.
\newblock Linear syzygies of {S}tanley-{R}eisner ideals.
\newblock {\em Math. Scand.}, 89(1):117--132, 2001.

\bibitem{Si} 
A.~Sinefakopoulos.
\newblock On {B}orel fixed ideals generated in one degree.
\newblock {\em J. Algebra}, 319:2739--2760, 2008.

\bibitem{Sko} 
E.~Sk\"oldberg.
\newblock Morse theory from an algebraic viewpoint.
\newblock {\em Trans. Amer. Math. Soc.}, 358(1):115--129, 2006.

\bibitem{Tay}
D.~Taylor.
\newblock {\em Ideals Generated by Monomials in an {{R}}-sequence}.
\newblock PhD thesis, University of Chicago, 1966.

\bibitem{Tch} 
A.~Tchernev.
\newblock Representations of matroids and free resolutions for multigraded
  modules.
\newblock {\em Adv. Math.}, 208(1):75--134, 2007.

\bibitem{TchVa} 
A.~Tchernev and M.~Varisco.
\newblock Modules over categories and {B}etti posets of monomial ideals.
\newblock {\em Proc. Amer. Math. Soc.}, 143(12):5113--5128, 2015.

\bibitem{Ve} 
M.~Velasco.
\newblock Minimal free resolutions that are not supported by a {CW}-complex.
\newblock {\em J. Algebra}, 319:102--114, 2008.

\bibitem{Yuz}
S.~Yuzvinsky.
\newblock Taylor and minimal resolutions of homogeneous polynomial ideals.
\newblock {\em Math. Res. Lett.}, 6(5-6):779--793, 1999.

\bibitem{Wei}
C.~A. Weibel.
\newblock {\em An introduction to homological algebra}, volume~38 of {\em
  Cambridge Studies in Advanced Mathematics}.
\newblock Cambridge University Press, Cambridge, 1994.

\end{thebibliography}
\end{document}